\documentclass[11pt,reqno]{amsart}
\usepackage[utf8]{inputenc} 
\usepackage[T1]{fontenc}    
\usepackage[english]{babel} 
\usepackage{amsmath,amsthm,amsfonts,amssymb,amsbsy,amscd}
\usepackage{mathrsfs,float}
\usepackage[mathcal]{euscript}
\usepackage{enumitem,mathtools}
\usepackage[colorlinks=true,linkcolor=blue,citecolor=green]{hyperref}
\usepackage{tikz}
\usepackage{tikz-cd}
\usepackage{xcolor}

\numberwithin{equation}{section}

\usepackage[a4paper,text={6.1in,8in},centering,top=1.5in]{geometry}
\small\normalsize
\title[Limit theorems for random matrix products]{Limit theorems for random matrix products with a simple top Lyapunov exponent}
\author[Barrientos]{Pablo G. Barrientos}
\address{Instituto de Matem\'atica e Estat\'istica, UFF, Niter\'oi, Brazil}
    \email{pgbarrientos@id.uff.br}

 \author[Malicet]{Dominique Malicet}
\address{LAMA, Universit\'e Gustave Eiffel, Champs-Sur-Marne, France} 
\email{dominique.malicet@univ-eiffel.fr}
   
\date{}

\newtheorem{mainthm}{Theorem}

\newtheorem{theorem}{Theorem}[section]
\newtheorem{proposition}[theorem]{Proposition}
\newtheorem{lemma}[theorem]{Lemma}
\newtheorem{corollary}[theorem]{Corollary}
\newtheorem{definition}[theorem]{Definition}
\newtheorem{remark}[theorem]{Remark}

\theoremstyle{definition}
\newtheorem{rem}[theorem]{Remark}

\newcommand{\R}{\mathbb R}
\newcommand{\N}{\mathbb N}
\newcommand{\C}{\mathbb C}
\newcommand{\Pp}{\mathbb P}
\newcommand{\1}{\mathbf 1}
\newcommand{\cI}{\mathcal I}

\newcommand{\cN}{\mathcal N}
\newcommand{\Hol}{\operatorname{Hol}}
\newcommand{\Lip}{\operatorname{Lip}}
\newcommand{\supp}{\operatorname{supp}}
\newcommand{\eqdef}{\mathrel{\mathop:}=}

\DeclareMathOperator{\Var}{Var}
\DeclareMathOperator{\Ree}{Re}

\begin{document}

\begin{abstract}
We study products of independent, identically distributed random matrices
with an exponential moment and a simple top Lyapunov exponent, without
assuming irreducibility. Outside an exceptional invariant subspace called
the equator, we establish contraction on average for a weighted projective
distance and a spectral gap for the associated Markov operator. We obtain
a central limit theorem, a Berry--Esseen bound, deviation estimates near
the mean, and an almost sure invariance principle both for the growth of
a fixed vector outside the equator and for the operator norm. For fixed
vectors, a non-arithmeticity assumption also yields a local limit theorem
and a renewal theorem. Our contraction criterion applies more generally
to mostly contracting random maps on locally compact spaces with a
Lyapunov function, including certain random interval maps.
\end{abstract}

\maketitle \thispagestyle{empty}
\section{Introduction}\label{s:intro}

Let $(A_k)_{k\geq0}$ be independent, identically distributed random
matrices in $\operatorname{GL}(m,\R)$, $m\geq2$, on a probability space
$(\Omega,\mathscr F,\Pp)$, and put
$A(\omega)=A_0(\omega)$ and
$A^n(\omega)=A_{n-1}(\omega)\cdots A_0(\omega)$ for $n\geq1$,
with $A^0(\omega)=I$.
We fix a Euclidean norm on $\R^m$ and use the associated operator norms.
Our interest is in the asymptotic behavior of $\log\|A^n(\omega)\|$
and $\log\|A^n(\omega)x\|$ for fixed $x\neq0$.
Throughout the matrix results, we assume an exponential moment:
\begin{equation}\label{eq:exp-moment}
 \int\max\{\|A(\omega)\|,\|A(\omega)^{-1}\|\}^{\eta_0}
 \,d\Pp<\infty
 \qquad\text{for some }\eta_0>0.
\end{equation}
The Lyapunov exponents can be defined through exterior powers
\cite{BouLac:85}. In particular,
\begin{align*}
 \lambda_1(A) &=\lim_{n\to \infty}\frac1n
       \int\log\|A^n(\omega)\|\,d\Pp,
 \\ \lambda_1(A)+\lambda_2(A)  &=\lim_{n\to \infty}\frac1n
       \int\log\|\wedge^2 A^n(\omega)\|\,d\Pp.
\end{align*}
The classical setting assumes that the semigroup generated by the support
of the law of $A$ is strongly irreducible and proximal. This implies
$\lambda_1(A)>\lambda_2(A)$~\cite{GR85} and yields numerous
limit theorems, beginning with Le Page's seminal
work~\cite{LePage}. We prove that these limit theorems still hold
under the sole condition $\lambda_1(A)>\lambda_2(A)$. See Theorems~\ref{main:norm-clt}, ~\ref{mainthm:pointwise-CLT-BE} and~\ref{mainthm:linear-asip-final}.

Le Page's method starts from uniform contraction on average of the
projective action, derives a spectral gap for its Markov operator, and
then obtains limit theorems by the Nagaev method. Without irreducibility,
an invariant subspace $E$ called the equator may repel nearby projective
points, so uniform contraction need not hold. By weighting the
projective distance near this subspace, we obtain contraction on
average outside it (Theorem~\ref{main:spectral-gap-linear}). This
yields a spectral gap on a suitable function space and the corresponding
quantitative limit theorems. Our spectral gap strengthens the result of~\cite{AG20}, which, under
the same assumptions, establishes a unique stationary probability $\nu$
outside the projectivization of $E$ and exponential convergence in law toward it from
every initial direction outside the equator. 

We begin with the asymptotic behavior of $\log\|A^n(\omega)\|$.
Almost surely, its quotient by $n$ tends to $\lambda_1(A)$.
When the top Lyapunov exponent is
simple, we obtain a central limit theorem with a convergence rate and
a Hoeffding-type bound for deviations from $\lambda_1(A)$.

\begin{mainthm}\label{main:norm-clt}
Assume~\eqref{eq:exp-moment} and $\lambda_1(A)>\lambda_2(A)$.
The following hold for some $\sigma\geq0$.
\begin{enumerate}[label=\textup{(\roman*)},itemsep=0.25cm]
\item \emph{Central limit theorem:} 
\[
 \frac{1}{\sqrt{n}} \big(\log\|A^n(\omega)\|-n\lambda_1(A) \big)
\xrightarrow{\,\mathrm{law}\,}\cN(0,\sigma^2) \quad \text{as $n\to \infty$}.
\]
\item \emph{Berry--Esseen bound:} if $\sigma>0$, then there is $C>0$ such
that, for all $n\geq1$,
\begin{equation}\label{eq:norm-BE}
 \sup_{u\in\R}\left|
 \Pp\left(\frac{\log\|A^n(\omega)\|-n\lambda_1(A)}
                   {\sigma\sqrt n}\leq u\right)-\frac1{\sqrt{2\pi}}\int_{-\infty}^u e^{-s^2/2}\,ds
 \right|\leq\frac C{\sqrt n}.
\end{equation}
\item \emph{Large deviations:} there are
$\varepsilon_0,h,C>0$ such that, for all $n\geq1$ and
$0<\varepsilon<\varepsilon_0$,
\begin{equation}\label{eq:LDa}
 \Pp\left(\left|\frac1n\log\|A^n(\omega)\|-\lambda_1(A)\right|
 >\varepsilon\right)\leq Ce^{-hn\varepsilon^2}.
\end{equation}
\end{enumerate}
\end{mainthm}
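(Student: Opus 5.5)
The plan is to deduce Theorem~\ref{main:norm-clt} from the corresponding statements for the growth $\log\|A^n(\omega)x\|$ of fixed vectors outside the equator $E$. Those statements come from the spectral gap of Theorem~\ref{main:spectral-gap-linear} via the Nagaev--Guivarc'h method. The link between the two quantities is a comparison, with exponential tails, between $\log\|A^n\|$ and $\log\|A^n x\|$ for suitable $x$.

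\emph{Step 1: fixed-vector statements.} On the weighted H\"older space supplied by Theorem~\ref{main:spectral-gap-linear}, consider the perturbed operators
\[
P_t f(\bar x)=\int e^{it\sigma(A,\bar x)} f(A\bar x)\,d\Pp, \qquad \sigma(A,\bar x)=\log\frac{\|Ax\|}{\|x\|}.
\]
The exponential moment~\eqref{eq:exp-moment} makes $t\mapsto P_t$ analytic for $|t|$ small, including complex $t$ with $|\Ree t|<\eta_0/2$. This requires checking that multiplication by $e^{t\sigma}$ preserves the weighted Lipschitz norm near $E$; I expect that the weight can be chosen so that this holds. Perturbation theory then gives a leading eigenvalue $\lambda(t)=\exp\bigl(it\lambda_1-\tfrac{\sigma^2t^2}{2}+O(t^3)\bigr)$. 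The standard Nagaev expansion yields the CLT and the Berry--Esseen bound for $\log\|A^n x\|$, uniformly over $\bar x$ in compact sets outside $\mathbb P(E)$. For real $t$ near $0$, the analyticity of the Laplace transform $\mathbb E\, e^{t(\log\|A^nx\|-n\lambda_1)}\approx\lambda(t)^n\le e^{n(Ct^2)}$ together with Chebyshev's inequality gives the Gaussian-type bound $Ce^{-hn\varepsilon^2}$ for $\varepsilon<\varepsilon_0$. The variance $\sigma^2\ge0$ is $\lambda''$ at $0$, and it does not depend on $x$.

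\emph{Step 2: passage to the norm.} Choose a basis $e_1,\dots,e_m$ with no $e_j\in E$. This is possible since $E$ is a proper subspace. Then
\[
\max_j\|A^ne_j\|\le\|A^n\|\le\sqrt m\,\max_j\|A^ne_j\|,
\]
so $\log\|A^n\|=\max_j\log\|A^ne_j\|+O(1)$. This sandwich alone does not give the CLT, because a maximum of $m$ dependent variables is involved. Instead I will use the sharper estimate: for fixed $x\notin E$,
\[
\Pp\bigl(\log\|A^n\|-\log\|A^nx\|>s\bigr)\le Ce^{-cs}\quad\text{uniformly in }n.
\]
This follows from $\|A^nx\|\ge\|A^n\|\,|\langle x/\|x\|,u_n\rangle|$, where $u_n$ is the top right singular direction of $A^n$, up to an error controlled by $\sigma_2/\sigma_1\to0$. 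Here $u_n$ converges (for the transposed/inverse walk) to a random direction whose law is H\"older regular with respect to hyperplanes not containing it. I would obtain this from the spectral gap applied to the adjoint walk $A^{T}$, which also has a simple top exponent, so that $\Pp(|\langle x,u_n\rangle|<e^{-s})\le Ce^{-cs}$. With an $O_{\Pp}(1)$ difference having exponential tails, the CLT transfers at once. The Berry--Esseen bound transfers with $s=C\log n$, costing $O(\log n/\sqrt n)$ in the shift. To get exactly $C/\sqrt n$, I would rather run the Nagaev argument directly on $\log\|A^n\|$, using the identity $\log\|A^n\|=\log\|A^nx\|-\log\delta(\cdot)$ with a cocycle correction that is a bounded function in the spectral space, as in Le Page's treatment. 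The large deviations transfer directly: pick $s=n\varepsilon/2$ and note $e^{-cn\varepsilon}\le e^{-hn\varepsilon^2}$ for $\varepsilon<\varepsilon_0$.

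\emph{Main obstacle.} The hard part is the regularity estimate $\Pp(|\langle x,u_n\rangle|<e^{-s})\le Ce^{-cs}$ without irreducibility. Hyperplanes containing the equator of the dual walk can carry mass, so the plain H\"older regularity of stationary measures fails. I will handle this by using that $x\notin E$ is equivalent to $x$ not being orthogonal to the dual top direction almost surely. Quantitative versions of this should follow from the weighted contraction of Theorem~\ref{main:spectral-gap-linear} applied to the dual action, with the weight function controlling how close $u_n$ approaches $x^\perp$.
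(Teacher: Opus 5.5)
Your Step~1 is essentially the paper's proof of Theorem~\ref{mainthm:pointwise-CLT-BE}, but Step~2 has a genuine gap.

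First, everything in Step~2 rests on the uniform bound $\Pp(\log\|A^n\|-\log\|A^nx\|>s)\leq Ce^{-cs}$. This means keeping the top right singular direction of $A^n$ quantitatively away from $x^\perp$, and you leave it open. Applied to $A^{\mathsf T}$, Theorem~\ref{main:spectral-gap-linear} controls the dynamics away from the equator of $A^{\mathsf T}$. It gives no bound on the mass that the limiting singular direction puts near the hyperplane $x^\perp$, so this is a separate estimate you would still have to prove.

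Second, and more seriously, even granting that bound you only get a Berry--Esseen rate $O(\log n/\sqrt n)$, as you note yourself, and your proposed repair does not work. A relation involving $\log\delta$ compares $\log\|A^nx\|$ with the growth of $Qx$ in the quotient $\R^m/E$, not with $\log\|A^n\|$. For instance, if $E=\{0\}$ then $\delta\equiv1$, and your identity would read $\|A^n\|=\|A^nx\|$ for a unit $x$. Since $\log\|A^n\|$ is not a Birkhoff sum of the projective chain started at a point, there is nothing on which to run the Nagaev argument directly. So (ii) at rate $C/\sqrt n$ is not established, and (i) and (iii) depend on an unproved estimate.

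The missing idea is a \emph{deterministic} bounded comparison between $\log\|A^n\|$ and the growth of a single fixed vector, for a cocycle that still has a simple top exponent. The paper takes $A^{(2)}=A\otimes A$ and $u=m^{-1/2}\sum_j e_j\otimes e_j$.
\begin{itemize}
\item Since $\|(B\otimes B)u\|^2=\frac1m\operatorname{tr}\bigl((B^{\mathsf T}B)^2\bigr)=\frac1m\sum_i s_i(B)^4$, one has $m^{-1/2}\|B\|^2\leq\|(B\otimes B)u\|\leq\|B\|^2$. Hence $\bigl|\frac12\log\|(A^n\otimes A^n)u\|-\log\|A^n\|\bigr|\leq\frac14\log m$ for every $\omega$ and $n$.
\item The singular values of $B\otimes B$ are the products $s_i(B)s_j(B)$, so $\lambda_1(A^{(2)})=2\lambda_1(A)>\lambda_1(A)+\lambda_2(A)=\lambda_2(A^{(2)})$.
\item $A^{(2)}$ inherits the exponential moment.
\item $\hat u$ lies outside the equator of $A^{(2)}$, because $\|(A^{(2)})^nu\|$ grows at rate $2\lambda_1(A)$.
\end{itemize}
Now apply the fixed-vector theorem to $(A^{(2)},u)$. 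The deterministic shift of size $\frac14\log m$ transfers the CLT by Slutsky. It transfers the Berry--Esseen bound at the cost of an extra $\frac{\log m}{4\sigma\sqrt{2\pi n}}$, and it transfers the Gaussian large-deviation bound. No regularity estimate for singular directions is needed.

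For (i) and (iii) alone there are also cheaper routes. Your basis sandwich plus a union bound over the $e_j$ already gives (iii). Item (i) follows from the almost sure boundedness of $\log\|A^n\|-\log\|A^nx\|$, which the paper obtains in Lemma~\ref{lem:linear-direction-comparison} from mean contraction of pairs of directions.
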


Without simplicity of the top exponent, the norm CLT can fail:
Furstenberg--Kesten~\cite[Ex.~2]{FurstenbergKestenCLT} and
Benoist--Quint~\cite[Ex.~4.15]{BenoistQuint} give examples of folded
Gaussian limits. For $m=2$, the CLT and Berry--Esseen bound were
established in~\cite[Props.~VIII and X]{BMbook}.

We next consider $\log\|A^n(\omega)x\|$ for a fixed unit vector $x$.
Its growth rate is $\lambda_1(A)$ for $\Pp$-almost every~$\omega$,
except possibly when $x$ belongs to an exceptional deterministic
$A$-invariant subspace $E$, called the \emph{equator} of $A$,~\cite{FK83}.
Equivalently, $E$ is the largest $A$-invariant subspace of $\R^m$
such that $\lambda_1(A|_E)<\lambda_1(A)$, with the convention
$\lambda_1(A|_{\{0\}})=-\infty$. It is always a proper subspace and
may equal $\{0\}$, in which case $A$ is \emph{quasi-irreducible}.
We refer to~\cite{AG20,BMbook} for details. For $x\notin E$, we obtain a CLT, a Berry--Esseen bound, a local
central limit theorem, and a renewal theorem for
$\log\|A^n(\omega)x\|$. The last two require a non-arithmeticity
condition. 	

Under the simplicity assumption of the first Lyapunov exponent, we say that $A$ is
	\emph{non-arithmetic} if there are no $a\in\R$, $b>0$, and measurable
	real-valued projective function $g$, defined $\nu$-almost everywhere, such that
	\[
	\phi_\omega(\hat x)+g(A(\omega)\hat x)-g(\hat x)
	\in a+b\mathbb Z
	\quad\text{for $(\Pp\times\nu)$-almost every~$(\omega,\hat x)$.}
	\]
Here, for each $\omega \in\Omega$, 
	\[
	\phi_\omega(\hat x)=\log\frac{\|A(\omega)x\|}{\|x\|}
	\]
where $\hat x$ denotes the line induced by a nonzero vector $x$ and $\nu$ is the unique stationary probability measure outside of the projectivization of the equator. Under~\eqref{eq:exp-moment}, non-arithmeticity implies that the
asymptotic variance is positive. Indeed, the zero-variance
coboundary criterion~\cite[Thm.~A(iii)]{HH01} gives, if $\sigma^2=0$, a
measurable real-valued projective function $g$ such that $\phi_\omega(\hat x)+g(A(\omega)\hat x)-g(\hat x)
 =\lambda_1(A)$ for $(\Pp\times\nu)$-almost every~$(\omega,\hat x)$. This implies arithmeticity with $a=\lambda_1(A)$ and any $b>0$.

\begin{mainthm}\label{mainthm:pointwise-CLT-BE}
Assume~\eqref{eq:exp-moment},  $\lambda_1(A)>\lambda_2(A)$ and let $\sigma$ be as in Theorem~\ref{main:norm-clt}. Then for every unit vector $x\in\R^m\setminus E$, 
\begin{enumerate}[label=\textup{(\roman*)}]
\item \emph{Central limit theorem:}
\[
 \frac{1}{\sqrt{n}}\big(\log \| A^n(\omega)x\|-n\lambda_1(A) \big)
 \xrightarrow[n\to \infty]{\mathrm{law}}\cN(0,\sigma^2).
\]
\item \emph{Berry--Esseen bound:} if $\sigma>0$, there is a constant $C_x>0$ such that   for all $n\geq1$,
\[
 \sup_{u\in\R}\left|
 \Pp\left(\frac{\log \| A^n(\omega)x\|-n\lambda_1(A)}{\sigma\sqrt n}\leq u\right)
 -\frac1{\sqrt{2\pi}}\int_{-\infty}^u e^{-s^2/2}\,ds \right|\leq\frac{C_x}{\sqrt n}.
\]
\item \emph{Local central limit theorem:} If $A$ is
non-arithmetic, then for every $g\in C_c(\R)$,
\[ \qquad \quad
 \sup_{a\in\R}\left|
 \sigma\sqrt{2\pi n}\int g(\log \| A^n(\omega)x\|-n\lambda_1(A)-a)\,d\Pp
 -e^{-a^2/(2n\sigma^2)}\int g(s)\,ds
 \right|\longrightarrow0.
\]
\item \emph{Renewal theorem:} If $\lambda_1(A)>0$ and $A$ is
non-arithmetic, then, for every $g\in C_c(\R)$,
\[
 \sum_{n\geq1}\int g(\log \| A^n(\omega)x\|-a)\,d\Pp =
 \frac{\1_{\{a>0\}}}{\lambda_1(A)}\int g(s)\,ds+o(1),
\quad  a\to \pm\infty.
\]
\item \emph{Large deviations:} There are
$\varepsilon_0,h>0$ and $C_x>0$ such that, for all $n\geq1$ and
$0<\varepsilon<\varepsilon_0$,
\[
 \Pp\left(\left|\frac{1}{n}\log \|A^n(\omega)x\|-\lambda_1(A)\right|>\varepsilon\right)
 \leq C_xe^{-hn\varepsilon^2}.
\]
If $\sigma>0$, there is a real-analytic
rate function $c$ on $(-\varepsilon_0,\varepsilon_0)$, independent
of $x$, such that
$c(0)=c'(0)=0$, $c''(0)=1/\sigma^2$, and, for
$0<\varepsilon<\varepsilon_0$,
\[
 \begin{aligned}
 \lim_{n\to \infty}\frac1n\log\Pp\left(
       \frac{1}{n}\log \|A^n(\omega)x\|-\lambda_1(A)>\varepsilon\right)&=-c(\varepsilon),\\
 \lim_{n\to \infty}\frac1n\log\Pp\left(
       \frac{1}{n}\log \|A^n(\omega)x\|-\lambda_1(A)<-\varepsilon\right)&=-c(-\varepsilon).
 \end{aligned}
\]
\end{enumerate}
\end{mainthm}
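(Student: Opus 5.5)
The plan is to run the Nagaev--Guivarc'h spectral method on the projective Markov chain restricted to $X=\Pp(\R^m)\setminus\Pp(E)$, using the spectral gap announced in Theorem~\ref{main:spectral-gap-linear}. Write $\log\|A^n(\omega)x\|=S_n\phi(\hat x)=\sum_{k=0}^{n-1}\phi_{\theta^k\omega}(A^k(\omega)\hat x)$; this is a Birkhoff-type sum of the cocycle $\phi$ along the chain $\hat x_k=A^k\hat x$. Consider the Fourier/Laplace-perturbed operators
\[
P_zf(\hat x)=\int e^{z\phi_\omega(\hat x)}f(A(\omega)\hat x)\,d\Pp(\omega),
\]
so that $\int e^{zS_n\phi(\hat x)}\,d\Pp=P_z^n\1(\hat x)$. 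The first step is to check that, on the weighted H\"older-type Banach space $\mathcal B$ of functions on $X$ from the spectral-gap theorem, $z\mapsto P_z$ is analytic from a complex neighborhood $\{|z|<\delta\}$ into bounded operators on $\mathcal B$. This uses the exponential moment \eqref{eq:exp-moment}: $|\phi_\omega|\le\log\max\{\|A\|,\|A^{-1}\|\}$, and the H\"older variation of $e^{z\phi_\omega}$ in the weighted distance is controlled by a power of $\|A\|\|A^{-1}\|$. The weight blowing up near $\Pp(E)$ has to be compatible, meaning the Lyapunov function used in the weighting must be integrable against these moments. Since $P_0=P$ has a simple isolated eigenvalue $1$ with projector $f\mapsto\nu(f)\1$ and the rest of its spectrum in a disc of radius $<1$, analytic perturbation gives $P_z^n=\kappa(z)^n\Pi_z+N_z^n$ with $\|N_z^n\|\le C\rho^n$. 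Here $\kappa$ is analytic, $\kappa(0)=1$, $\kappa'(0)=\lambda_1(A)$ (via $\nu(\phi)=\lambda_1$, a consequence of the Furstenberg formula for $\nu$ outside the equator), and $\kappa''(0)-\kappa'(0)^2=\sigma^2$. I would identify $\sigma$ with the norm constant of Theorem~\ref{main:norm-clt} either by construction or by showing that $\log\|A^n\|-\log\|A^nx\|$ is tight for $x\notin E$.

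With this expansion in hand, items (i) and (ii) follow the standard route. For (i), apply the expansion at $z=it/\sqrt n$ to obtain the characteristic function limit $e^{-\sigma^2t^2/2}$. For (ii), combine the Esseen smoothing inequality with the third-order Taylor expansion of $\log\kappa(it)$ for $|t|\le\delta$; the constant $C_x$ depends on $x$ through $\|\1\|$ evaluated at the point, i.e.\ through the weight at $\hat x$, which is finite since $\hat x\notin\Pp(E)$.

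Item (v) follows from the exponential Chebyshev inequality with real $z\in(-\delta,\delta)$, which gives $\Pp(S_n\phi-n\lambda_1>n\varepsilon)\le C_x\kappa(s)^ne^{-sn(\lambda_1+\varepsilon)}$. Optimizing in $s$ yields the Gaussian bound $e^{-hn\varepsilon^2}$, because $\log\kappa(s)-s\lambda_1\le Cs^2$. When $\sigma>0$, $\log\kappa$ is strictly convex near $0$, and its Legendre transform $c$ is real-analytic with $c''(0)=1/\sigma^2$. The matching lower bound comes from the G\"artner--Ellis/Bahadur--Rao argument via the change of measure given by the normalized $P_s$, whose spectral gap persists for small real $s$.

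Items (iii) and (iv) need the non-arithmetic hypothesis, and I expect this step to be the main obstacle. One must show that for every $t\neq0$ the spectral radius of $P_{it}$ on $\mathcal B$ is $<1$, locally uniformly in $t$. By quasi-compactness of $P_{it}$ (a Doeblin--Fortet inequality, derived as in the spectral-gap proof, with $|e^{it\phi}|=1$), spectral radius $1$ forces an eigenfunction $f\in\mathcal B$ with $P_{it}f=e^{i\alpha}f$. The standard strict convexity argument then gives $|f|$ constant on $\supp\nu$ and $e^{it\phi_\omega(\hat x)}f(A\hat x)=e^{i\alpha}f(\hat x)$ for $\Pp\times\nu$-a.e.\ $(\omega,\hat x)$. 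Writing $f=e^{itg}$ produces arithmeticity with $b=2\pi/|t|$, which is a contradiction.

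The delicate points are that $\supp\nu$ need not be all of $X$ and that functions in $\mathcal B$ may be unbounded near the equator. To handle this, I would first prove $|f|\le \nu$-essentially constant using the contraction on average, namely $P^n|f|\to\nu(|f|)$ pointwise on $X$. Once the spectral radius bound is established, the LLT follows by the usual Stone/Breiman approach with $\hat g$ compactly supported, splitting $|t|\le\delta$ from $\delta\le|t|\le T$. The renewal theorem follows from Kesten's/Guivarc'h--Le Page's argument, summing $\sum_n P_{it}^n$ as $(I-P_{it})^{-1}$, which is analytic off $t=0$, with the singular part $1/(\lambda_1(-it))$ at $0$; positivity of $\lambda_1$ ensures the correct sign of the indicator $\1_{\{a>0\}}$.
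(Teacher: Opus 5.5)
Your proposal is correct and follows essentially the paper's proof. The paper applies the spectral (Nagaev--Guivarc'h) method, packaged as Theorem~\ref{thm:abstract-limit-theorems} via Hennion--Herv\'e, to the one-step observable $u=\phi$ on the weighted H\"older space of Theorem~\ref{main:spectral-gap-linear}. It excludes peripheral spectrum of $P_{it}$, $t\neq0$, by the same twisted Lasota--Yorke inequality and unimodular-eigenfunction argument you describe. It identifies the variance with that of Theorem~\ref{main:norm-clt} through the almost surely bounded difference $\log\|A^n(\omega)x\|-\log\|A^n(\omega)\|$ (Lemma~\ref{lem:linear-direction-comparison}) and Slutsky.

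One small point you should make explicit: for the locally uniform bounds in (iii)--(iv) you need analyticity, or at least norm continuity, of $z\mapsto P_z$ on a neighbourhood of the whole imaginary axis, not only near $z=0$. The paper gets this by expanding $P_z$ around each $it_0$ using the exponential moment.
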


We now state the contraction and spectral gap estimates underlying
these results. Let $\hat x$ denote the line through a nonzero vector
$x$, and let $P(\R^m)=\{\hat x:x\in\R^m\setminus\{0\}\}$ be the
projective space of $\R^m$, equipped with the angular distance
$d(\hat x,\hat y)=\|x\wedge y\|/(\|x\|\,\|y\|)$.

Let $A$ be a random matrix. The projective maps and Markov operator 
are defined by
\[
f_\omega(\hat x)=A(\omega)\hat x,\qquad
(P\varphi)(\hat x)=\int\varphi(f_\omega(\hat x))\,d\Pp.
\]
Thus $f_\omega^n(\hat x)=A^n(\omega)\hat x$ for $n\geq0$.
On $X=P(\R^m)\setminus P(E)$, define
$\delta(\hat x)=d(\hat x,P(E))$ if $E\neq\{0\}$ and
$\delta\equiv1$ otherwise. For $\alpha,\beta>0$ define 
\[
 \Delta_{\alpha,\beta}(\hat x,\hat y)
 \eqdef\frac{d(\hat x,\hat y)^\alpha}
 {\delta(\hat x)^\beta\delta(\hat y)^\beta},
 \qquad
 [\varphi]_{\alpha,\beta}
 \eqdef\sup_{\hat x\neq\hat y}
 \frac{|\varphi(\hat x)-\varphi(\hat y)|}
      {\Delta_{\alpha,\beta}(\hat x,\hat y)}.
\]
The function $\Delta_{\alpha,\beta}$ need not be a metric.
For a fixed $\hat x_0\in X$, let
\[
 \Hol_{\alpha,\beta}(X)
 \eqdef\{\varphi\in C(X;\C):
                         [\varphi]_{\alpha,\beta}<\infty\},
 \qquad
 \|\varphi\|_{\alpha,\beta}
 \eqdef|\varphi(\hat x_0)|+[\varphi]_{\alpha,\beta}.
\]
This norm makes $\Hol_{\alpha,\beta}(X)$ a Banach space;
the choice of $\hat x_0$ changes it only by an equivalent norm.

\begin{mainthm}\label{main:spectral-gap-linear}
Assume~\eqref{eq:exp-moment} and
$\lambda_1(A)>\lambda_2(A)$. There is $\varepsilon_0>0$ such
that, whenever $\alpha,\beta>0$ satisfy
$\beta<\varepsilon_0$ and $\alpha/\beta<\varepsilon_0$, there are
$C>0$ and $\rho\in(0,1)$ for which
\[
 \int\Delta_{\alpha,\beta}
 \bigl(f_\omega^n(\hat x),f_\omega^n(\hat y)\bigr)
 \,d\Pp
 \leq C\rho^n\Delta_{\alpha,\beta}(\hat x,\hat y)
 \quad \text{for $\hat x,\hat y\in X,\ n\geq0$}.
\]
Moreover, $P$ has a unique stationary probability $\nu$ on $X$, and has a spectral gap on $\Hol_{\alpha,\beta}(X)$, that is, 
possibly changing $C$ and $\rho$,
\[
 \left\|P^n\varphi-\int\varphi\,d\nu
 \right\|_{\alpha,\beta}
 \leq C\rho^n\|\varphi\|_{\alpha,\beta}
 \quad \text{for $\varphi\in\Hol_{\alpha,\beta}(X),\ n\geq0$.}
\]
\end{mainthm}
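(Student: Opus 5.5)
The strategy is to first prove the weighted contraction on average for a single large time $n_0$, and then propagate it to all $n$ by submultiplicativity. Given two nonzero vectors $x,y$, the basic projective identity is
\[
 d(A\hat x,A\hat y)=\frac{\|\wedge^2A\,(x\wedge y)\|\,\|x\|\,\|y\|}{\|Ax\|\,\|Ay\|\,\|x\wedge y\|}\,d(\hat x,\hat y)\leq \frac{\|\wedge^2 A\|\,\|x\|\,\|y\|}{\|Ax\|\,\|Ay\|}\,d(\hat x,\hat y).
\]
The numerator grows like $e^{n(\lambda_1+\lambda_2)}$. For $\hat x\notin P(E)$, the quantities $\|A^nx\|/\|x\|$ grow like $e^{n\lambda_1}$, but this growth is not uniform near $P(E)$. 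The weight $\delta^{-\beta}$ is meant to compensate exactly for this. The key quantitative input is a lower bound of the form
\[
 \|A^n(\omega)x\|\gtrsim \delta(\hat x)\, e^{n\lambda_1}
\]
on average. Heuristically, $x$ decomposes along $E$ and along a complementary direction, and the component transverse to $E$ has norm comparable to $\delta(\hat x)\|x\|$. That component grows at rate $\lambda_1$ in the quotient $\R^m/E$, where $\lambda_1(A|_{\R^m/E})=\lambda_1(A)$ and the quotient action is quasi-irreducible. One should also track the evolution of $\delta$ itself: we expect $\delta(f^n_\omega\hat x)\gtrsim \delta(\hat x)e^{-n(\lambda_1(A|_E)-\lambda_1)}$ up to subexponential factors. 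So near $P(E)$ the point is pushed away from the equator, and $\delta^{-\beta}$ decays in expectation.

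Concretely, I would proceed in four steps.

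\textbf{Step 1 (quotient estimates).} From~\eqref{eq:exp-moment} and quasi-irreducibility of the quotient action, use the convergence results of~\cite{AG20} to get
\[
 \int \Big(\frac{\|A^nx\|}{\|x\|}\Big)^{-s}d\Pp\leq C e^{-ns(\lambda_1-\epsilon)}\delta(\hat x)^{-s}
\]
uniformly for small $s$ and large $n$. One route is the large deviation estimates in the quasi-irreducible case, applied to the norm of the projection of $A^nx$ to $\R^m/E$.

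\textbf{Step 2 (equator repulsion).} Obtain $\int \delta(f^n_\omega\hat x)^{-\beta}\,d\Pp\leq C\rho_1^n\delta(\hat x)^{-\beta}+C$, i.e.\ a Lyapunov-function (drift) inequality for $\delta^{-\beta}$. This again follows from Step 1 combined with $\|A^n|_E\|\approx e^{n\lambda_1(A|_E)}$ and the gap $\lambda_1(A|_E)<\lambda_1(A)$.

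\textbf{Step 3 (combine).} Raise the displayed inequality to the power $\alpha$ and divide by $\delta(f^n\hat x)^\beta\delta(f^n\hat y)^\beta$. Then apply H\"older's inequality to separate three contributions:
\begin{itemize}
\item the term $\|\wedge^2A^n\|^\alpha$, of size $e^{n\alpha(\lambda_1+\lambda_2+\epsilon)}$;
\item the two terms $(\|x\|/\|A^nx\|)^{\alpha}$ and $(\|y\|/\|A^ny\|)^{\alpha}$, each of size $e^{-n\alpha(\lambda_1-\epsilon)}\delta^{-\alpha}$;
\item the weights $\delta(f^n\cdot)^{-\beta}$.
\end{itemize}
Taken at face value, this loses $\delta(\hat x)^{-\alpha}\delta(\hat y)^{-\alpha}$, which is a larger loss than the weight supplies. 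Here the condition $\alpha/\beta<\varepsilon_0$ enters. Since $d\leq 1$, one can sacrifice part of $d^\alpha$ and only use the contraction on a smaller exponent. Alternatively, one can note that the distance contraction by a factor $e^{-n\alpha(\lambda_1-\lambda_2)}$ only needs to beat a factor $\delta^{-\alpha}$, while the $\delta^{-\beta}$ weights themselves contract by Step 2. Splitting into the cases where $\delta(\hat x),\delta(\hat y)$ are both large, versus at least one small, gives $\int \Delta_{\alpha,\beta}(f^{n_0}\hat x,f^{n_0}\hat y)\leq \tfrac12\Delta_{\alpha,\beta}(\hat x,\hat y)$ for a fixed large $n_0$. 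This case analysis is the step I expect to be the main obstacle: making the exponents $\alpha\ll\beta\ll1$ cooperate uniformly in $\hat x,\hat y$.

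\textbf{Step 4 (iteration and spectral gap).} Independence lets Step 3 iterate over blocks of length $n_0$. The moment bound~\eqref{eq:exp-moment} controls the intermediate times $n<n_0$, since $\Delta$ is distorted by at most $\max\{\|A\|,\|A^{-1}\|\}^{2\alpha+2\beta}$. This gives the contraction for all $n$. For the spectral gap, note that
\[
 [P^n\varphi]_{\alpha,\beta}\leq C\rho^n[\varphi]_{\alpha,\beta}
\]
is immediate from the contraction. Moreover, $|P^n\varphi(\hat x_0)-\int\varphi\,d\nu|\leq [\varphi]_{\alpha,\beta}\int\!\int \Delta(f^n\hat x_0,f^n\hat y)\,d\Pp\,d\nu(\hat y)$, by stationarity. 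This is $\leq C\rho^n[\varphi]_{\alpha,\beta}\int\delta^{-\beta}\,d\nu$, which is finite by the drift inequality of Step 2. Existence and uniqueness of $\nu$ on $X$ follow from~\cite{AG20}, or directly from the contraction: the drift gives tightness on $X$, and the contraction gives uniqueness.
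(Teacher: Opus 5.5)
Your Steps 1, 2 and 4 are sound. Step 1 follows from $\|A^n(\omega)x\|\geq\|\tilde A^n(\omega)Qx\|$, from $\|Qx\|=\delta(\hat x)\|x\|$, and from the uniform Kingman theorem for the quasi-irreducible quotient. Step 4 is exactly the paper's spectral-gap argument. The gap is Step 3, which carries the whole difficulty: it is not actually carried out, and neither remedy you sketch closes it.

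Separating the weights $\delta(f^n_\omega\hat x)^{-\beta}$ from $(\|x\|/\|A^n(\omega)x\|)^{\alpha}$ by H\"older discards precisely the correlation between them. That is what produces the loss $\delta(\hat x)^{-\alpha}\delta(\hat y)^{-\alpha}$.

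\emph{Sacrificing part of $d^\alpha$.} This cannot help. The loss is a power of $\delta$, not of $d$, and trading $d(\hat x,\hat y)^{\alpha}$ for $d(\hat x,\hat y)^{\alpha'}$ with $\alpha'<\alpha$ goes the wrong way for close pairs.

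\emph{The case split.} If you keep this bound when $\delta(\hat x)$ is small, the drift only gives
$\delta(\hat x)^{\beta}\bigl(\int\delta(f^{n_0}_\omega\hat x)^{-p\beta}\,d\Pp\bigr)^{1/p}\lesssim(q^{n_0}+C\delta(\hat x)^{\gamma})^{\beta/\gamma}$.
This does not vanish as $\delta(\hat x)\to0$, so multiplied by $\delta(\hat x)^{-\alpha}$ it is unbounded for every fixed $n_0$.

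\textbf{The missing idea} is to keep the two factors together. Since $\delta(f^n_\omega\hat x)=\|\tilde A^n(\omega)Qx\|/\|A^n(\omega)x\|$, your first inequality gives exactly
\[
\frac{\|A^n(\omega)x\|}{\|x\|}
=\frac{\delta(\hat x)}{\delta(f^n_\omega\hat x)}\cdot
\frac{\|\tilde A^n(\omega)Qx\|}{\|Qx\|},
\qquad\text{hence}\qquad
\frac{\Delta_{\alpha,\beta}(f^n_\omega\hat x,f^n_\omega\hat y)}{\Delta_{\alpha,\beta}(\hat x,\hat y)}
\leq G_n^{\alpha}\,W_n^{\beta-\alpha},
\]
where
\[
G_n=\frac{\|\wedge^2A^n(\omega)\|\,\|Qx\|\,\|Qy\|}{\|\tilde A^n(\omega)Qx\|\,\|\tilde A^n(\omega)Qy\|},
\qquad
W_n=\frac{\delta(\hat x)\,\delta(\hat y)}{\delta(f^n_\omega\hat x)\,\delta(f^n_\omega\hat y)}.
\]
Here $G_n$ is a submultiplicative cocycle over the two-point action on the compact space $P(\tilde V)^2$. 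By your Step 1 and the growth of $\wedge^2$, $\sup\int\log G_N\,d\Pp<0$ for large $N$. The small-power Taylor argument and submultiplicativity then give $\sup\int G_n^s\,d\Pp\leq C\rho^n$ for small $s$. Your drift estimate and Jensen give $\sup_{n,\hat x,\hat y}\int W_n^t\,d\Pp<\infty$ for small $t$. Cauchy--Schwarz then yields the contraction for all small $0<\alpha\leq\beta$, with no case analysis.

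\textbf{Rescuing the case split.} Alternatively, you can make your case split work as follows.
\begin{itemize}
\item When $\delta(\hat x)\leq\eta$, bound the distance ratio by $\Lip(f^{n_0}_\omega)^{\alpha}$ rather than through $\|A^nx\|$. This gives $L^{\alpha n_0}(q^{n_0}+C\eta^{\gamma})^{\beta/\gamma}$.
\item Choose $\eta=e^{-\kappa n_0}$ with $0<\kappa<(\lambda_1-\lambda_2)/2$, so that the far case $\eta^{-2\alpha}e^{-\alpha n_0(\lambda_1-\lambda_2-\epsilon)}$ still decays.
\item Do not take $\eta$ as small as the region where the drift already yields $q^{n_0}$. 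That region has size about $e^{-n_0(\lambda_1-\lambda_1(A|_E))}$, which is too small because $\lambda_1(A|_E)\leq\lambda_2(A)$.
\item The near case then decays once $\alpha\log L$ is below $\beta$ times a fixed rate. This is where the smallness of $\alpha/\beta$ genuinely enters.
\end{itemize}

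\textbf{Comparison with the paper.} Either fix gives a direct quantitative proof specific to the linear setting. The paper instead verifies three qualitative inputs:
\begin{itemize}
\item the equator is repelling, which gives the Lyapunov function $\delta^{-\gamma}$;
\item $\lambda(\mu)\leq\lambda_2-\lambda_1$ for every stationary $\mu$ off $P(E)$;
\item stationary proximality, obtained from synchronization.
\end{itemize}
The abstract theorem then combines these through the local uniform Kingman theorem on the two-point motion and the corrector $A_nB_n^c$. The exponent $c$ of that corrector is the source of the paper's restriction $\alpha/\beta<\varepsilon_0$.
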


The
contraction also yields a strong approximation for the logarithmic
cocycle, through the almost sure invariance principle of
Cuny--Dedecker--Merlev\`ede~\cite{CDM24}.

\begin{mainthm}\label{mainthm:linear-asip-final}
Assume~\eqref{eq:exp-moment} and
$\lambda_1(A)>\lambda_2(A)$. Let $\sigma^2$ be the  variance
in Theorems~\ref{main:norm-clt} and~\ref{mainthm:pointwise-CLT-BE}. For every unit vector
$x\in\R^m\setminus E$, the process
$\log \|A^n(\omega)x\|-n\lambda_1(A)$ can be realized on an enriched probability space together with independent
$G_k\sim\cN(0,\sigma^2)$ such that
\begin{equation}\label{eq:linear-asip-final-log3}
 \log \|A^n(\omega)x\|
 -n\lambda_1(A)-\sum_{k=1}^nG_k
 =O((\log n)^3)\qquad\text{a.s.}
\end{equation}
The same conclusion holds with $\log\|A^n(\omega)\|$ in place of
$\log \|A^n(\omega)x\|$. If the law of $A$ has compact support in
$\operatorname{GL}(m,\R)$, the error improves to
$O((\log n)^2)$ in both statements.
\end{mainthm}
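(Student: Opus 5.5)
We plan to deduce Theorem~\ref{mainthm:linear-asip-final} from the abstract almost sure invariance principle of Cuny--Dedecker--Merlev\`ede~\cite{CDM24} for additive functionals of Markov chains. The input is the contraction estimate of Theorem~\ref{main:spectral-gap-linear}. The chain is $Z_n=f_\omega^n(\hat x)$ on $X$, started at $\hat x\notin P(E)$. The cocycle identity gives
\[
 \log\|A^n(\omega)x\|=\sum_{k=0}^{n-1}\phi_{\omega_k}(Z_k),
\]
where $\omega_k$ denotes the $k$-th coordinate. This is an additive functional of the extended chain $(A_k,Z_k)$, whose increments are i.i.d.\ innovations driven by the current state. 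The framework of~\cite{CDM24} asks for three things. First, a coupling or contraction condition of the form $\int\Delta(f^n_\omega\hat x,f^n_\omega\hat y)\,d\Pp\le C\rho^n\Delta(\hat x,\hat y)$. Second, a regularity condition on the observable, roughly $|\phi_\omega(\hat x)-\phi_\omega(\hat y)|\le L(\omega)\Delta(\hat x,\hat y)$ with $L$ having suitable moments. Third, moment conditions on $\sup$-type quantities. With exponential moments, these yield the rate $O((\log n)^3)$; with bounded increments, they yield $O((\log n)^2)$.

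\emph{Step 1 (regularity of $\phi$).} We will show that $|\phi_\omega(\hat x)-\phi_\omega(\hat y)|\le 2\|A(\omega)\|\|A(\omega)^{-1}\|\,d(\hat x,\hat y)$. Since $d\le1$ and $\delta\le1$, we have $d\le d^\alpha\le\Delta_{\alpha,\beta}$ for $\alpha\le1$. Hence $\phi_\omega$ is Lipschitz for $\Delta_{\alpha,\beta}$ with constant $L(\omega)$, and $L(\omega)^{\eta}$ is integrable for small $\eta$ by~\eqref{eq:exp-moment}. Also $|\phi_\omega|\le\log\max\{\|A\|,\|A^{-1}\|\}$, which has exponential moments.

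\emph{Step 2 (moments along the orbit and centering).} The weight $\delta^{-\beta}$ enters the distance. We therefore need $\sup_n\int\delta(Z_n)^{-\beta'}d\Pp<\infty$ for some $\beta'$, which is the Lyapunov-function property underlying Theorem~\ref{main:spectral-gap-linear}; we take it as part of that contraction mechanism. Using the spectral gap, we identify $\lambda_1(A)=\iint\phi_\omega\,d\Pp\,d\nu$ and $\sigma^2$ through the usual Green--Kubo series, so the constants match Theorems~\ref{main:norm-clt} and~\ref{mainthm:pointwise-CLT-BE}. A martingale-coboundary decomposition via $\sum_n P^n(\bar\phi)$, with $\bar\phi=\int\phi_\omega\,d\Pp-\lambda_1$, converges in $\Hol_{\alpha,\beta}$. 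We then apply~\cite{CDM24}, which gives the Skorokhod-type realization with i.i.d.\ $G_k\sim\cN(0,\sigma^2)$ and the error $O((\log n)^3)$. In the compactly supported case, $\phi$ is bounded and $L$ is bounded, and the improved rate $O((\log n)^2)$ from the same reference applies.

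\emph{Step 3 (the norm).} We pick a basis $e_1,\dots,e_m$ with no $e_i\in E$; this is possible because $E$ is proper, so a generic basis works. Then $\max_i\|A^nx_i\|\le\|A^n\|\le\sqrt m\max_i\|A^ne_i\|$. We need $\log\|A^n\|-\log\|A^ne_1\|=O(\log n)$ a.s., or even bounded along a.e.\ orbit up to a tight term. By simplicity of $\lambda_1$, $\|A^n\|/\|A^ne_1\|\le 1/d(\hat e_1,\hat U_n)$, where $\hat U_n$ is the hyperplane of the contracted directions of $A^n$. Standard Oseledets/large-deviation estimates, together with $\Pp(d(\hat e_1,\hat U_n)<e^{-t})\le Ce^{-ct}$ uniformly in $n$ (a Hölder regularity of the limiting distribution, obtained from the contraction estimate), give by Borel--Cantelli with $t=C\log n$ that the difference is $O(\log n)$ a.s. This is absorbed into the stated error.

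The main obstacle is Step 2. We must check that the hypotheses of~\cite{CDM24} are satisfied with the non-metric, weighted $\Delta_{\alpha,\beta}$ and the unbounded observable. In particular, we need the moment bounds on $\delta(Z_n)^{-\beta}$ uniformly in $n$ starting from a fixed $\hat x\notin P(E)$. We would try first to extract these from the drift inequality $P(\delta^{-\beta})\le\rho\,\delta^{-\beta}+C$ that should underlie the proof of Theorem~\ref{main:spectral-gap-linear}.
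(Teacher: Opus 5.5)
There is a genuine gap. Your architecture matches the paper's: feed the contraction of Theorem~\ref{main:spectral-gap-linear} into \cite{CDM24}, then use a basis of vectors outside $E$ to reach the norm. What is missing is the one estimate that drives the whole proof, and you place the difficulty elsewhere.

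\textbf{The missing estimate.} It is the $L^1$-summability of one-step differences along two trajectories driven by the same symbols. Set $D_i=\phi_{\omega_i}(f^i_\omega\hat y)-\phi_{\omega_i}(f^i_\omega\hat x)$. Independence of $\omega_i$ from $(f^i_\omega\hat x,f^i_\omega\hat y)$, together with the contraction, gives $\int|D_i|\,d\Pp\leq C\rho^i\Delta_{\theta,\beta}(\hat x,\hat y)$. Hence $\sum_i|D_i|<\infty$ almost surely, by Tonelli.

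For this you need a small-exponent H\"older bound such as \eqref{eq:linear-projective-potential-holder}, obtainable by interpolating your Lipschitz bound with $|\phi_\omega|\leq\log\ell(\omega)$. Your constant $2\|A\|\,\|A^{-1}\|$ need not be integrable under \eqref{eq:exp-moment}. An integrable constant is exactly what an $L^1$ coupling coefficient requires.

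\textbf{Stationary versus fixed start.} The estimate above does two jobs here.
\begin{itemize}
\item Integrated against $\nu\otimes\nu$, it bounds the coupling coefficient of the \emph{stationary} process started from $\nu$. That is the setting in which \cite[Thm.~1]{CDM24} is used.
\item Integrated against $\nu$ in $\hat y$ only, it gives $\sup_n|S_n(\omega,\hat x)-S_n(\omega,Y)|<\infty$ almost surely. Lemma~\ref{lem:asip-bounded-comparison} then transfers the stationary ASIP to the fixed starting point $\hat x$.
\end{itemize}
You apply \cite{CDM24} directly to the chain started at $\hat x$ and skip this transfer. The obstacle you single out, uniform moments of $\delta(Z_n)^{-\beta}$ from a fixed start, follows at once from $P^nV\leq q_V^nV+C_V$. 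It is not where the difficulty lies; non-stationarity is.

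\textbf{Step~3.} This step rests on an estimate you do not prove and that contraction on average does not supply. The hyperplane $\hat U_n$ is determined by the top right singular direction of $A^n$. Its limiting law is stationary for the transposed cocycle $A^{\mathsf T}$. A bound $\Pp(d(\hat e_1,\hat U_n)<e^{-t})\leq Ce^{-ct}$, uniform in $n$, would need two things:
\begin{itemize}
\item H\"older regularity of that stationary measure near the hyperplane $e_1^\perp$;
\item a rate of convergence toward it, to get uniformity in $n$.
\end{itemize}
Theorem~\ref{main:spectral-gap-linear}, even applied to $A^{\mathsf T}$, gives neither. In the reducible setting this regularity is a separate, nontrivial question.

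The estimate is also unnecessary. Apply the summability estimate to each pair $(x_1,x_j)$ of a basis of unit vectors outside $E$. This gives $\sup_n\bigl|\log\|A^nx_j\|-\log\|A^nx_1\|\bigr|<\infty$ almost surely. Then \eqref{eq:linear-basis-norm-equivalence} yields $\log\|A^n\|=\log\|A^nx\|+O_{x,\omega}(1)$, which is Lemma~\ref{lem:linear-direction-comparison}. The same gluing lemma then transfers the ASIP to $\log\|A^n\|$ with the same variance and error. This is an almost surely bounded comparison, stronger than the $O(\log n)$ you aimed for.
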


In particular, the ASIP gives the \emph{functional CLT} and the \emph{law of the
iterated logarithm}, both for a fixed direction outside $E$ and for
the operator norm.

Under strong irreducibility and proximality, $\sigma>0$ always holds~\cite{LePage}. In dimension $m\geq2$, these
assumptions also imply non-arithmeticity
\cite[Prop.~4.6]{GuivarchUrban05}. In the present setting, $\sigma$ can vanish.  The proposition below gives an algebraic characterization of $\sigma=0$. It would be interesting to find an algebraic characterization of
non-arithmeticity under the weaker assumption $\lambda_1>\lambda_2$.

Let $\tilde V=\R^m/E$ and let $\tilde A$ be the induced cocycle.
Its \emph{pinnacle subspace} $\tilde F$ is the intersection of all
$\tilde A$-invariant subspaces $W\subset\tilde V$ such that
$\lambda_1(\tilde A|_W)=\lambda_1(A)$.

\begin{proposition}\label{prop:linear-zero-variance}
Assume~\eqref{eq:exp-moment} and $\lambda_1(A)>\lambda_2(A)$.
The common variance $\sigma^2$ of Theorems~\ref{main:norm-clt} and
\ref{mainthm:pointwise-CLT-BE} vanishes if and only if $\dim\tilde F=1$
and $\|\tilde A(\omega)|_{\tilde F}\|$ is $\Pp$-almost surely constant.
\end{proposition}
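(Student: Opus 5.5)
The argument has two directions, both built on the coboundary description of zero variance. Set $\phi_\omega(\hat x)=\log(\|A(\omega)x\|/\|x\|)$ on $X$ with the stationary measure $\nu$ of Theorem~\ref{main:spectral-gap-linear}. By the spectral gap and the criterion of~\cite[Thm.~A(iii)]{HH01}, $\sigma^2=0$ if and only if
\[
\phi_\omega(\hat x)=\lambda_1(A)+g(\hat x)-g(A(\omega)\hat x)
\]
for $(\Pp\times\nu)$-a.e.\ $(\omega,\hat x)$, with $g$ measurable. Since $\phi$ has exponential moments and $g$ lies in $L^2(\nu)$, the solution $g$ can be taken in the H\"older space $\Hol_{\alpha,\beta}(X)$, namely $g=\sum_n P^n(\lambda_1-P\phi)$-type. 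This uses that $P\phi\in\Hol_{\alpha,\beta}$, which follows from the moment condition.

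\emph{Sufficiency.} Assume $\dim\tilde F=1$ and $\|\tilde A(\omega)|_{\tilde F}\|=c$ almost surely; then $c=e^{\lambda_1(A)}$.
\begin{enumerate}
\item Let $F\subset\R^m$ be the preimage of $\tilde F$, so $F\supset E$ with $\dim F/E=1$.
\item For a unit $x\in F\setminus E$, write $x=e+t u$ with $u$ spanning a complement of $E$ in $F$. The $u$-coordinate of $A^n x$ is multiplied by $c^n$ exactly, while the $E$-part grows at rate $\lambda_1(A|_E)<\lambda_1(A)$.
\item Hence $\log\|A^n x\|-n\lambda_1$ stays bounded in $L^2$, in fact converges.
\item Theorem~\ref{mainthm:pointwise-CLT-BE}(i) applies to $x\notin E$, so the limit law is $\delta_0$ and $\sigma=0$.
\end{enumerate}

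\emph{Necessity.} Assume $\sigma=0$.
\begin{enumerate}
\item Iterating the coboundary identity gives
\[
\log\|A^n(\omega)x\|-n\lambda_1=g(\hat x)-g(A^n\hat x)
\]
for $\nu$-a.e.\ $\hat x$ and, by continuity of $g$ on $X$, for every $\hat x\in X$ and a.e.\ $\omega$.
\item Pass to the quotient $\tilde V=\R^m/E$. The quantity $\log\|\tilde A^n\tilde x\|-\log\|A^nx\|$ is controlled by $\log\delta(A^n\hat x)$, which is tight because stationarity on $X$ keeps the orbit away from $P(E)$ in law.
\item So $\log\|\tilde A^n\tilde x\|-n\lambda_1$ is tight for every $\tilde x\neq0$. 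Consequently $\log\|\tilde A^n(\omega)\|-n\lambda_1$ is tight as well.
\item Now argue by contradiction. If $\dim\tilde F\ge2$, then either $\tilde A|_{\tilde F}$ carries a second exponent, which is forbidden since $\lambda_2(\tilde A|_{\tilde F})\le\lambda_2(A)<\lambda_1$, or tightness of all vectors forces a contradiction with minimality of $\tilde F$.
\end{enumerate}

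The minimality step is the main obstacle. Tightness of $\log\|\tilde A^n\tilde x\|-n\lambda_1$ uniformly in directions should make the norm cocycle on $P(\tilde F)$ cohomologous to a constant via a continuous $\tilde g$. From this I would build an invariant conformal structure, using the Busemann-type limit $\tilde g$ to define an $\tilde A$-invariant seminorm $N(\tilde x)=\lim$ in the sense $N(\tilde A\tilde x)=e^{\lambda_1}N(\tilde x)$. The kernel of $N$ would be an invariant subspace, and simplicity of the top exponent together with minimality of $\tilde F$ should force the restriction to $\tilde F$ to be one-dimensional.

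Once $\dim\tilde F=1$, the identity $\log\|\tilde A^n|_{\tilde F}\|-n\lambda_1=$ a sum of i.i.d.\ terms $\log\|\tilde A_k|_{\tilde F}\|-\lambda_1$, which is tight, forces these terms to vanish almost surely. Hence $\|\tilde A(\omega)|_{\tilde F}\|$ is almost surely constant.

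My first attempt at the obstacle would be: choose $x\in F\setminus E$ with $\nu$-typical $\hat x$, and use that the $\nu$-support in $P(F)$ is the limit set, which is a single point when the top exponent is simple and the action is restricted to $\tilde F$.
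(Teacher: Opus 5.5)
Your Step~2 of the necessity part (comparing $A$ with the quotient via $\log\delta$) is the same reduction the paper makes with $b=\log\delta$. Your sufficiency argument is a more elementary alternative to citing \cite[Prop.~IX]{BMbook} and works after one correction (second paragraph).

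\textbf{The gap.} You do not prove $\sigma=0\Rightarrow\dim\tilde F=1$. You flag it as the main obstacle, and nothing in Steps~3--4 closes it.
\begin{itemize}
\item Tightness of $\log\|\tilde A^n\tilde x\|-n\lambda_1(A)$ for all $\tilde x\neq0$ is no stronger than $\sigma=0$. The pairwise comparison in the proof of Lemma~\ref{lem:linear-direction-comparison} shows that any two directions outside $E$ stay at almost surely bounded logarithmic distance, so tightness in one direction already gives it in all.
\item The dichotomy in Step~4 is vacuous. When $\dim\tilde F\geq2$, a second exponent $\lambda_2(\tilde A|_{\tilde F})<\lambda_1(A)$ is automatic, not forbidden.
\item The coboundary holds only on $\supp\nu$: continuity upgrades a $\nu$-full set to $\supp\nu$, not to all of $X$ as Step~1 claims. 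So you cannot define a homogeneous $N$ on all of $\tilde F\setminus\{0\}$ with $N(\tilde A\tilde x)=e^{\lambda_1}N(\tilde x)$. If you could, $e^{-n\lambda_1}\tilde A^n|_{\tilde F}$ would be bounded with bounded inverse, all exponents on $\tilde F$ would equal $\lambda_1$, and no kernel argument would be needed. The premise is exactly what is unavailable.
\item Your fallback is false. Simplicity of $\lambda_1$ makes the limit direction of the reversed products $A_0\cdots A_{n-1}$ a single random point. Its law, the stationary measure, typically has infinite support spanning $\tilde F$; in the strongly irreducible proximal case it lies in no hyperplane.
\item The fallback is also circular. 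If $\supp\nu$ were a single point, that line would be invariant and $\dim\tilde F=1$ would follow at once.
\end{itemize}

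\textbf{What is missing, and the correction to sufficiency.} The missing ingredient is a genuine positivity theorem. In the quasi-irreducible quotient, $\tilde F$ is the minimal nonzero invariant subspace, and the pushforward of $\nu$ is carried by $P(\tilde F)$ by uniqueness of the stationary measure. The variance $\sigma^2$ can be read off from any $\tilde x\in\tilde F\setminus\{0\}$. When $\dim\tilde F\geq2$, the restricted cocycle is irreducible with a simple top exponent. One then needs a Le Page--Guivarc'h type result that such a cocycle has $\sigma>0$. This is the content of \cite[Prop.~IX]{BMbook}, which the paper invokes after reducing to $\tilde A$, treating $\dim\tilde V=1$ directly. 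Without this input or an equivalent argument, only sufficiency is proved.

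In the sufficiency part, the $E$-component of $A^nx$ does not grow at rate $\lambda_1(A|_E)$. The blocks are coupled: $A(\omega)u=s(\omega)u+C(\omega)$ with $|s(\omega)|=c$ and $C(\omega)\in E$. What is true is that the $E$-component of $e^{-n\lambda_1}A^nx$ satisfies a contracting affine recursion. It is therefore tight; it converges in law, not almost surely, so ``in fact converges'' should be dropped. Together with $\|A^nx\|\geq\|QA^nx\|=c^n\|Qx\|$, this still gives tightness of $\log\|A^nx\|-n\lambda_1$, and hence $\sigma=0$.
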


\subsection*{Scheme of the proof}
The matrix product acts on the locally compact space
$X=P(\R^m)\setminus P(E)$. When $E\neq\{0\}$, comparison of the
growth on $E$ and on $\R^m/E$ shows that $P(E)$ is a repeller. A
negative power of the distance to $P(E)$ is therefore a Lyapunov
function for a suitable iterate. On the other hand,
$\lambda_1>\lambda_2$ makes the local projective Lyapunov exponent
negative for every stationary measure on $X$. Projective
trajectories also synchronize. In the abstract setting developed
below, a local uniform Kingman theorem turns this negativity into
contraction on compact subsets. The Lyapunov function controls the
dynamics near infinity, and a corrector combines the two estimates
into global contraction for a weighted projective distance. This
gives the spectral gap. Limit theorems for $\log\|A^n x\|$ then follow
from the logarithmic cocycle; those for $\log\|A^n\|$ follow by
applying the pointwise results to a suitable vector for the tensor
square $A\otimes A$.

\subsection*{Organization of the paper}
Sections~\ref{sec:drift} and~\ref{sec:subadditive} collect the
Lyapunov and subadditive tools. Section~\ref{sec:mostly} establishes
contraction and spectral gap results for mostly contracting random
maps on locally compact spaces. Section~\ref{sec:repellers} constructs
Lyapunov functions from compact repellers, while
Section~\ref{sec:limit-theorems} derives abstract limit theorems and
an almost sure invariance principle. Finally,
Section~\ref{s:linear} applies these results to matrix products, and
Section~\ref{s:interval} gives nonlinear examples on the interval.

\section{Preliminaries}
\label{sec:drift}
Throughout this section, $X$ is a locally compact separable metric
space, hence a Polish space, and all probabilities are Borel (and
therefore Radon). We write $\mathscr B$ for the Borel $\sigma$-algebra
and $C_b(X)$ for the bounded continuous real-valued functions.
Weak$^*$ convergence means $\int\varphi\,d\mu_j\to\int\varphi\,d\mu$
for every $\varphi\in C_b(X)$.

\subsection{Markov operators and Lyapunov functions}
A transition probability $P(x,B)$ is a function from $X \times \mathscr{B}$ to $[0,1]$ such that \( P(x,\cdot) \) is a probability measure for all \( x \in X \), and \( P(\cdot,B) \) is a measurable function for all \( B \in \mathscr{B} \). This transition probability induces the Markov operator $P\varphi(x)=\int\varphi(y)P(x,dy)$ on bounded Borel functions. We use the same notation $P\varphi$ for nonnegative Borel functions (possibly taking the value $+\infty$), and for Borel functions bounded above (possibly taking the value $-\infty$). It is positive, $P\1=\1$, and $\|P\varphi\|_\infty\leq\|\varphi\|_\infty$. We assume that $x\mapsto P(x,\cdot)$ is continuous for the topology of weak convergence of probabilities, equivalently $P(C_b(X))\subset C_b(X)$. Such an operator is called \emph{Markov--Feller}. Its dual $P^*$ is characterized by $\int \varphi\,d(P^*\mu)=\int P\varphi\,d\mu$ for any $\varphi\in C_b(X)$. 
A probability $\mu$ is \emph{$P$-stationary} if $P^*\mu=\mu$. We denote by $\cI(P)$ the closed convex set of stationary probabilities; its extreme points are the ergodic stationary probabilities, and we denote by $\cI_{\rm erg}(P)$ the set of such measures. 

\begin{definition}
A \emph{Lyapunov function} for $P$ is a continuous proper function $V:X\to[1,\infty)$ such that
\begin{equation}\label{eq:geometric-drift}
        PV\leq qV+C
\end{equation}
for some constants $q\in(0,1)$ and $C>0$. Here, proper means that for every $R\geq1$ the sublevel set $\{V\leq R\}$ is compact.
\end{definition}

If $X$ is compact, the constant function $V\equiv1$ is a Lyapunov function. Next,  we list some basic properties of Lyapunov functions. 

\begin{proposition}\label{LyapunovFunctionsProperties}
Let $V$ be a Lyapunov function for $P$ and  set $C_V=C/(1-q) +1$. Then, for every $q_V\in(q,1)$, and $n\geq0$, there exists a compact set $K_n\subset X$
    such that
\begin{enumerate}
	\item $P^nV\leq q_V^nV+C_V$ on $X$.
	\item $\frac{P^nV}{V}\leq C_V$ on $X$.
	\item $\frac{P^nV}{V}\leq q_V^n$ outside of $K_n$.
\end{enumerate}
\end{proposition}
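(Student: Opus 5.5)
The plan is to iterate the drift inequality~\eqref{eq:geometric-drift}. Since $P$ is positive and $P\1=\1$, applying $P$ to $PV\leq qV+C$ gives $P^{k+1}V\leq qP^kV+C$. By induction,
\[
 P^nV\leq q^nV+C(1+q+\dots+q^{n-1})\leq q^nV+\frac{C}{1-q}.
\]
This is legitimate for the nonnegative, possibly unbounded function $V$, because $P$ acts monotonically on nonnegative Borel functions. As $q^n\leq q_V^n$ and $C/(1-q)<C_V$, item (1) follows.

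For item (2), divide by $V\geq1$:
\[
 \frac{P^nV}{V}\leq q^n+\frac{C}{(1-q)V}\leq 1+\frac{C}{1-q}=C_V.
\]

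For item (3), fix $n$ and use the sharper bound $P^nV/V\leq q^n+\frac{C}{(1-q)V}$ obtained above. The goal is to make this at most $q_V^n$, which holds as soon as
\[
 V\geq R_n\eqdef\frac{C}{(1-q)(q_V^n-q^n)}.
\]
For $n\geq1$ the denominator is positive, since $q_V>q$. Take $K_n=\{V\leq R_n\}$, which is compact because $V$ is proper. Outside $K_n$ we have $V>R_n$, hence $P^nV/V\leq q_V^n$.

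The case $n=0$ is trivial: $P^0V/V=1=q_V^0$, so $K_0=\emptyset$ works. The only real point in the argument is the strict gap $q_V>q$, which is what makes $R_n$ finite. Everything else is routine, so I expect no genuine obstacle.
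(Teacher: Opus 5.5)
Your proposal is correct and follows the paper's proof: iterate the drift inequality to get $P^nV\leq q^nV+C/(1-q)$, deduce (1) and (2) from $V\geq1$, and take $K_n$ to be a sublevel set of $V$ with threshold of order $(q_V^n-q^n)^{-1}$. The paper uses the threshold $C_V/(q_V^n-q^n)$, but your smaller $R_n$ works equally well, and your explicit choice $K_0=\emptyset$ for $n=0$ handles a case the paper leaves implicit.
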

\begin{proof}
Fix any $q_V\in (q,1)$. Iterate~\eqref{eq:geometric-drift} to obtain
$P^nV\leq q^nV+\frac{C}{1-q}$.
Since $V\geq1$ and $q<1$, this gives the first two bounds.  For $n\geq1$, take
$K_n=\{V\leq C_V/(q_V^n-q^n)\}$. Then $P^nV/V<q_V^n$
outside of $K_n$.
\end{proof}

A Lyapunov function also gives existence and tightness of stationary measures by the Krylov--Bogolyubov procedure (cf.~\cite[Cor.~4.23]{BenaimHurth}). Recall that a family \(\mathcal M\) of probability measures on \(X\) is
\emph{tight} if, for every \(\varepsilon>0\), there is a compact
set \(K\subset X\) such that
$ \mu(K)\geq1-\varepsilon$  for every $\mu\in\mathcal M$. Since \(X\) is Polish, tightness is equivalent to relative
compactness in the weak$^*$ topology.

\begin{proposition}\label{prop:tightness}
Let \(P\) be a Markov--Feller operator admitting a Lyapunov
function \(V\). Then \(\cI(P)\) is nonempty and weak$^*$ compact,
and every \(\mu\in\cI(P)\) satisfies
$\int V\,d\mu\leq C_V$. Moreover, for every sequence \((x_n)_{n\geq1}\) contained in a compact
subset of \(X\), the Ces\`aro averages
\[
        \eta_n\eqdef\frac1n\sum_{k=0}^{n-1}(P^k)^*\delta_{x_n}
\]
are tight, and all their weak$^*$ cluster values belong
to \(\cI(P)\). Furthermore, for every \(x\in X\), if \((X_k)_{k\geq0}\) is
a Markov chain with transition operator \(P\) and \(X_0=x\),
the empirical measures
\[
        e_n\eqdef\frac1n\sum_{k=0}^{n-1}\delta_{X_k}
\]
are almost surely tight.
\end{proposition}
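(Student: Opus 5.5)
The proof is a Krylov--Bogolyubov argument in which the Lyapunov function supplies the compactness that $X$ itself lacks.

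\emph{Moment bound.} For $\mu\in\cI(P)$ we apply Proposition~\ref{LyapunovFunctionsProperties}(1) to the truncations $V\wedge R$, which are bounded and continuous. Stationarity and positivity of $P^n$ give
\[
\int V\wedge R\,d\mu=\int P^n(V\wedge R)\,d\mu\leq\int (q_V^nV+C_V)\wedge R\,d\mu .
\]
Here we used $P^n(V\wedge R)\leq (P^nV)\wedge R$. Letting $n\to\infty$ by dominated convergence (the integrand is bounded by $R$ and tends to $C_V\wedge R$ pointwise) gives $\int V\wedge R\,d\mu\leq C_V$. Monotone convergence in $R$ then gives $\int V\,d\mu\leq C_V$.

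\emph{Tightness of the Ces\`aro averages.} Let $(x_n)$ lie in a compact set $K$. Then $M=\sup_KV<\infty$ because $V$ is continuous. By Proposition~\ref{LyapunovFunctionsProperties}(1), $\int V\,d\eta_n=\frac1n\sum_{k<n}P^kV(x_n)\leq M+C_V$. Markov's inequality gives $\eta_n(\{V>R\})\leq (M+C_V)/R$, and the sublevel sets $\{V\leq R\}$ are compact because $V$ is proper, so $(\eta_n)$ is tight. To show a cluster value $\eta$ is stationary, take $\varphi\in C_b(X)$:
\[
\int P\varphi\,d\eta_n-\int\varphi\,d\eta_n=\tfrac1n\bigl(P^n\varphi(x_n)-\varphi(x_n)\bigr)=O(\|\varphi\|_\infty/n).
\]
Since $P\varphi\in C_b(X)$ by the Feller property, we can pass to the limit along the subsequence and obtain $P^*\eta=\eta$. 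Taking $x_n=x$ constant shows that $\cI(P)$ is nonempty.

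\emph{Compactness of $\cI(P)$.} Tightness of $\cI(P)$ follows from the moment bound and Markov's inequality exactly as above. Closedness holds because $\mu\mapsto\int P\varphi\,d\mu-\int\varphi\,d\mu$ is weak$^*$ continuous for each $\varphi\in C_b(X)$. By Prokhorov's theorem on the Polish space $X$, $\cI(P)$ is therefore weak$^*$ compact.

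\emph{Almost sure tightness of the empirical measures.} This step needs more care; the approach is a martingale argument.
\begin{itemize}
\item Set $W=\sqrt V$. By Jensen's inequality, $PW\leq\sqrt{PV}\leq\sqrt{qV+C}\leq \sqrt q\,W+\sqrt C$, so $W$ is again a Lyapunov function. Moreover $P(W^2)=PV\leq qV+C$ controls the conditional second moments.
\item The increments $D_k=W(X_k)-PW(X_{k-1})$ form a martingale difference sequence with $\mathbb E[D_k^2]\leq\mathbb E[PV(X_{k-1})]\leq q_V^{k-1}V(x)+C_V$, which is bounded in $k$.
\item Hence $\sum_k D_k/k$ converges almost surely, and Kronecker's lemma gives $\frac1n\sum_{k\le n}D_k\to0$ almost surely.
\item Summing the drift inequality, $\frac1n\sum_{k<n}W(X_k)\leq \sqrt q\,\frac1n\sum_{k<n}W(X_k)+\sqrt C+o(1)$. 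Therefore $\limsup_n\int W\,de_n\leq\sqrt C/(1-\sqrt q)$ almost surely.
\end{itemize}
Since $W$ is proper, Markov's inequality on $\{W>R\}$ yields almost sure tightness of $(e_n)$. The main obstacle is this pathwise step: one has to choose the square-root trick, or a similar truncation, so that the martingale has summable normalized variances while the drift inequality is still available.
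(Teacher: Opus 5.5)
Your proof is correct and is the Krylov--Bogolyubov argument that the paper does not write out but invokes by citing \cite[Cor.~4.23]{BenaimHurth}: the truncation moment bound, Markov's inequality on the compact sublevel sets of $V$, stationarity of Ces\`aro cluster values via the Feller property, and Prokhorov's theorem. For the pathwise tightness, which the paper also leaves to that reference, your passage to $W=\sqrt V$ (so that $P(W^2)=PV$ bounds the martingale increments in $L^2$) followed by Kronecker's lemma is a sound way to supply the details.
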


\subsection{Some elementary probabilistic tools}
We record a few elementary tools on an arbitrary probability space
$(\Omega,\mathscr F,\Pp)$. For any extended real-valued function $h$, write
$h^+\eqdef\max\{h,0\}$.

\begin{lemma}\label{lem:one-sided-small-powers}
	Let \(Z\) be a random variable taking values in
	\([-\infty,\infty)\). Assume that
	$\int Z\,d\Pp<0$ and $\int e^{\delta Z^+}\,d\Pp<\infty$
	for some $\delta>0$. Then there is $s_0>0$ such that
	$$\int e^{sZ}\,d\Pp<1 \quad \text{ for every $s\in(0,s_0)$.}$$
\end{lemma}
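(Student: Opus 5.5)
The plan is to study the function $\psi(s)=\int e^{sZ}\,d\Pp$ on $[0,\delta]$ and to show that it equals $1$ at $s=0$ with right derivative at $0$ equal to $\int Z\,d\Pp<0$. We use the convention $e^{-\infty}=0$. On $\{Z=-\infty\}$ the integrand vanishes for $s>0$; this only helps.

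First, $\psi(s)$ is finite on $[0,\delta]$. Split $e^{sZ}=e^{sZ}\1_{\{Z\geq0\}}+e^{sZ}\1_{\{Z<0\}}$. The first term is bounded by $e^{\delta Z^+}$, which is integrable. The second is bounded by $1$.

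Next, write
\[
\psi(s)-1=\int (e^{sZ}-1)\,d\Pp ,
\]
and consider the difference quotient $(e^{sZ}-1)/s$ as $s\downarrow0$.

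On $\{Z\geq0\}$, convexity shows that $(e^{sZ}-1)/s$ decreases to $Z$ as $s\downarrow0$. For $s\leq\delta$ it is dominated by $(e^{\delta Z^+}-1)/\delta$, which is integrable. By dominated convergence,
\[
\int_{\{Z\geq0\}}\frac{e^{sZ}-1}{s}\,d\Pp\longrightarrow\int Z^+\,d\Pp .
\]
On $\{Z<0\}$, the quotient $(e^{sZ}-1)/s$ is nonpositive. It increases to $Z$ as $s\downarrow0$, with limit $-\infty$ on $\{Z=-\infty\}$. Here monotone convergence applies, or Fatou's lemma in the form $\limsup\leq\int\limsup$ for nonpositive functions. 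This gives
\[
\limsup_{s\downarrow0}\int_{\{Z<0\}}\frac{e^{sZ}-1}{s}\,d\Pp\leq-\int Z^-\,d\Pp .
\]
The right-hand side is possibly $-\infty$, since $\int Z\,d\Pp<0$ is well defined because $Z^+$ is integrable.

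Combining the two pieces,
\[
\limsup_{s\downarrow0}\frac{\psi(s)-1}{s}\leq\int Z\,d\Pp<0 .
\]
Hence there is $s_1>0$ with $\psi(s)<1$ for all $s\in(0,s_1)$. Taking $s_0=\min\{s_1,\delta\}$ gives the claim.

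An alternative route to the whole interval uses convexity of $\psi$ (Hölder), with $\psi(0)=1$ and $\psi(s)<1$ for small $s$. It is not needed, since the statement only asks for some interval $(0,s_0)$.

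The main point requiring care is the second step on the negative part. Here $Z$ may be $-\infty$ or have non-integrable negative part, so one must use monotone convergence or Fatou rather than dominated convergence. The one-sided moment is used only for the positive part.
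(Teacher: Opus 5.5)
Your proof is correct, but it takes a different route from the paper. The paper first assumes a two-sided moment $\int e^{\delta|Z|}\,d\Pp<\infty$. In that case it uses the quantitative Taylor bound $e^t\leq1+t+\tfrac12t^2e^{|t|}$ to get $\int e^{sZ}\,d\Pp\leq1+s\int Z\,d\Pp+O(s^2)$. It then reduces the general case to this one by truncating from below: set $Z_M=\max\{Z,-M\}$, choose $M$ by monotone convergence so that $\int Z_M\,d\Pp<0$, and use $e^{sZ}\leq e^{sZ_M}$.

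You avoid both the second-order expansion and the truncation. You bound the right derivative of $\psi$ at $0$ directly: on $\{Z\geq0\}$ the difference quotients are dominated by the slope at $s=\delta$, and on $\{Z<0\}$ Fatou handles $Z=-\infty$ and a non-integrable $Z^-$ at once. Your version is a single limiting argument and shows exactly where the one-sided moment enters. The paper's version yields an explicit expansion $1+s\int Z_M\,d\Pp+O(s^2)$. The same Taylor estimate is reused, uniformly in the point, in the proof of Proposition~\ref{prop:repeller-drift}. The lemma itself is only used qualitatively (pointwise in $x$ in the proof of Proposition~\ref{prop:two-cocycle-estimates}), so either proof serves.

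One slip to fix: on $\{Z<0\}$ the quotient $(e^{sZ}-1)/s$ does not increase to $Z$ as $s\downarrow0$. By the same convexity you invoke on $\{Z\geq0\}$, the slopes of $s\mapsto e^{sZ}$ from $0$ are nondecreasing in $s$. So the quotient decreases to $Z$, and indeed $(e^{sZ}-1)/s\geq Z$ by $e^t\geq1+t$. This does not affect your conclusion. Fatou's lemma for nonpositive functions needs no monotonicity. Monotone convergence applied to $-(e^{sZ}-1)/s\uparrow -Z$ even gives equality, so $\lim_{s\downarrow0}(\psi(s)-1)/s=\int Z\,d\Pp\in[-\infty,0)$.
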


\begin{proof}
	If $\int e^{\delta|Z|}\,d\Pp<\infty$, then
	$\int Z^2e^{\delta|Z|/2}\,d\Pp<\infty$. Taylor's inequality
	$e^t\leq1+t+\tfrac12t^2e^{|t|}$ gives, for small $s>0$,
	$\int e^{sZ}\,d\Pp\leq1+s\int Z\,d\Pp+O(s^2)<1$.
	In the general case, set $Z_M=\max\{Z,-M\}$.
	Since $Z^+$ is integrable and $Z_M\downarrow Z$,
	$\int Z_M\,d\Pp\downarrow\int Z\,d\Pp<0$.
	Choose $M$ so that $\int Z_M\,d\Pp<0$.
	Applying the first case to $Z_M$ yields
	$\int e^{sZ}\,d\Pp\leq\int e^{sZ_M}\,d\Pp<1$
	for all sufficiently small $s>0$.
\end{proof}

\begin{lemma}\label{FatouReverse}
	Let $(Z_n)_{n\geq1}$ be a sequence of random variables taking values in $[-\infty,\infty)$ such that, for some $p>1$,
	$\sup_{n\geq1}\int(Z_n^+)^p\,d\Pp<\infty$. Then
	\[
	\limsup_{n\to \infty}\int Z_n\,d\Pp
	\leq\int\limsup_{n\to \infty}Z_n\,d\Pp.
	\]
\end{lemma}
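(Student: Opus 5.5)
The idea is to reduce to uniform integrability of $Z_n^+$ and then apply the classical Fatou lemma to $M-Z_n$ after truncation. Concretely, fix $M>0$ and write $Z_n=\min\{Z_n,M\}+(Z_n-M)^+$. Since $M-\min\{Z_n,M\}\geq0$, Fatou's lemma applied to this nonnegative sequence gives
\[
\limsup_{n\to\infty}\int\min\{Z_n,M\}\,d\Pp\leq\int\limsup_{n\to\infty}\min\{Z_n,M\}\,d\Pp
\leq\int\limsup_{n\to\infty}Z_n\,d\Pp .
\]
This holds in the extended sense, with values allowed to be $-\infty$. The right-hand side is well defined because $(\limsup Z_n)^+\leq\limsup Z_n^+$, and this is integrable by Fatou together with the $L^p$ bound. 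Hence $\int\limsup Z_n\,d\Pp\in[-\infty,\infty)$.

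For the remainder term, I will use the bound $(Z_n-M)^+\leq Z_n^+\1_{\{Z_n^+>M\}}\leq (Z_n^+)^p M^{1-p}$, valid since $p>1$. It gives
\[
\sup_n\int(Z_n-M)^+\,d\Pp\leq M^{1-p}\sup_n\int(Z_n^+)^p\,d\Pp\eqdef M^{1-p}S .
\]
Combining the two estimates,
\[
\limsup_n\int Z_n\,d\Pp\leq\int\limsup_n Z_n\,d\Pp+M^{1-p}S .
\]
Letting $M\to\infty$ concludes the proof.

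The only delicate point is bookkeeping with the value $-\infty$. Each $\int Z_n\,d\Pp$ is well defined in $[-\infty,\infty)$ because $Z_n^+\in L^p\subset L^1$ on a probability space. Splitting the integral into the two pieces is legitimate because the second piece is finite. I do not expect any real obstacle beyond this; the substantive step is the uniform integrability bound $M^{1-p}S$.
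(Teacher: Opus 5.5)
Your proof is the paper's proof: truncate at level $M$, apply Fatou's lemma to the nonnegative functions $M-\min\{Z_n,M\}$, and bound the excess uniformly by $M^{1-p}\sup_n\int(Z_n^+)^p\,d\Pp$. The paper uses the inequality $Z_n\leq\min\{Z_n,M\}+Z_n^+\1_{\{Z_n>M\}}$ where you use the exact identity $Z_n=\min\{Z_n,M\}+(Z_n-M)^+$. This difference is immaterial, and your main chain of inequalities is correct.

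One sentence, however, is false and should be deleted: the claim that $\limsup_n Z_n^+$ is integrable ``by Fatou together with the $L^p$ bound.'' Fatou's lemma bounds $\int\liminf_n Z_n^+\,d\Pp$, not $\int\limsup_n Z_n^+\,d\Pp$, and the claim fails in general. For example, on $[0,1)$ with Lebesgue measure, list the half-open dyadic intervals $I$ level by level and let $Z_n=|I|^{-1/p}\1_I$ for the $n$-th one. Then $\int Z_n^p\,d\Pp=1$ for every $n$. Yet each point lies in one interval of every level, so $\limsup_n Z_n=+\infty$ everywhere. If you instead place these bumps only inside $[0,1/2)$ and set $Z_n\equiv-\infty$ on $[1/2,1)$, then $\int\limsup_n Z_n\,d\Pp$ has the form $\infty-\infty$. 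So the lemma must be read with its right-hand side assumed to exist, as is the case in all of the paper's applications. With that reading the claim is unnecessary. The step $\int\min\{\limsup_n Z_n,M\}\,d\Pp\leq\int\limsup_n Z_n\,d\Pp$ is just monotonicity of the integral, and when the right-hand side equals $+\infty$ there is nothing to prove.
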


\begin{proof}
	Set $C\eqdef \sup_{n\geq1}\int(Z_n^+)^p\,d\Pp$. For $M>0$, let
	$Z_n^{(M)}\eqdef \min\{Z_n,M\}$. The pointwise bound
	$Z_n\leq Z_n^{(M)}+Z_n^+\1_{\{Z_n>M\}}$ and
	$\int Z_n^+\1_{\{Z_n>M\}}\,d\Pp\leq C/M^{p-1}$ give
	\[
	\limsup_{n\to \infty}\int Z_n\,d\Pp
	\leq\limsup_{n\to \infty}\int Z_n^{(M)}\,d\Pp+\frac{C}{M^{p-1}}.
	\]
	Fatou's lemma applied to $M-Z_n^{(M)}\geq0$ gives
	\[
	\limsup_{n\to \infty}\int Z_n^{(M)}\,d\Pp
	\leq\int\limsup_{n\to \infty} Z_n^{(M)}\,d\Pp
	\leq\int\limsup_{n\to \infty} Z_n\,d\Pp.
	\]
	Combining the two inequalities and letting $M\to \infty$ gives the result.
\end{proof}

For the next corollary, $X$ is a metric space. We use the sequential characterization of upper semicontinuity.
\begin{corollary}\label{cor:semicontinuity}
	Let $\phi:X\times\Omega\longrightarrow[-\infty,\infty)$ be a random map which is upper semicontinuous in $x$ for $\Pp$-a.e.~$\omega\in\Omega$, and such that, for some $p>1$, $\sup_{x\in X}\int\phi^+(x,\omega)^p\,d\Pp<\infty$. Then the function $x\mapsto\int\phi(x,\omega)\,d\Pp$ is also upper semicontinuous on $X$.
\end{corollary}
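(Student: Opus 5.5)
The plan is to apply Lemma~\ref{FatouReverse} along a sequence, after using the sequential characterization of upper semicontinuity. Write $\Phi(x)\eqdef\int\phi(x,\omega)\,d\Pp$; this takes values in $[-\infty,\infty)$. It is well defined because $\int\phi^+(x,\omega)\,d\Pp\leq\bigl(\int\phi^+(x,\omega)^p\,d\Pp\bigr)^{1/p}<\infty$. Since $X$ is a metric space, it suffices to show the following: for every $x\in X$ and every sequence $x_n\to x$,
\[
\limsup_{n\to\infty}\Phi(x_n)\leq\Phi(x).
\]

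Fix such $x$ and $(x_n)$, and set $Z_n(\omega)\eqdef\phi(x_n,\omega)$. These are random variables with values in $[-\infty,\infty)$. Here we implicitly use that $\omega\mapsto\phi(y,\omega)$ is measurable for each fixed $y$, which is part of the meaning of a random map.

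The hypothesis gives the uniform moment bound
\[
\sup_n\int (Z_n^+)^p\,d\Pp\leq\sup_{y\in X}\int\phi^+(y,\omega)^p\,d\Pp<\infty.
\]
Lemma~\ref{FatouReverse} therefore yields
\[
\limsup_n\Phi(x_n)=\limsup_n\int Z_n\,d\Pp\leq\int\limsup_n Z_n\,d\Pp.
\]

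Next, let $\Omega_0$ be the full-measure set on which $y\mapsto\phi(y,\omega)$ is upper semicontinuous. For every $\omega\in\Omega_0$, the sequential characterization gives $\limsup_n\phi(x_n,\omega)\leq\phi(x,\omega)$. Integrating this almost-sure inequality gives
\[
\int\limsup_n Z_n\,d\Pp\leq\Phi(x).
\]
Monotonicity of the integral applies here because both sides have integrable positive parts; in particular $(\limsup_n Z_n)^+\leq\phi^+(x,\cdot)$ almost surely. Combining the two displayed inequalities gives the claim.

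There is no real obstacle, since the uniform $L^p$ control is exactly what Lemma~\ref{FatouReverse} needs. The only points requiring care are measurability of $\limsup_n Z_n$, which holds as a countable $\limsup$ of measurable functions, and the convention that integrals of functions bounded above may equal $-\infty$.
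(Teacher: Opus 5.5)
Your proposal is correct and follows the same route as the paper, which simply applies Lemma~\ref{FatouReverse} along an arbitrary sequence $x_j\to x$ and then uses almost-sure upper semicontinuity of $\phi(\cdot,\omega)$. Your added remarks on well-definedness of the integral, measurability of the $\limsup$, and the $-\infty$ convention fill in details the paper leaves implicit.
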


\begin{proof}
Apply Lemma~\ref{FatouReverse} along any sequence $x_j\to x$.
\end{proof}

\section{Subadditive sequences}
\label{sec:subadditive}
In this section, $X$ is a metric space and $P$ is a Markov operator on $X$. A sequence $(\phi_n)_{n\geq1}$ of Borel functions $X\to[-\infty,\infty)$ is \emph{$P$-subadditive} if 
$\phi_{n+m}\leq\phi_m+P^m\phi_n$ for all $n,m\geq1$.
In particular, if $\phi_1$ is bounded above, then the functions
$\phi_n/n$ are uniformly bounded above.

\begin{definition}\label{assymp-neg}
A $P$-subadditive sequence $(\phi_n)_{n\geq1}$ is \emph{asymptotically negative}
if, for every compact $K\subset X$,
\[
\limsup_{n\to \infty}\frac1n\sup_{x\in K}\phi_n(x)<0.
\]
\end{definition}

\subsection{A local uniform Kingman theorem}
When $X$ is compact, the next criterion is the uniform subadditive
theorem of~\cite[Thm.~C]{BMbook}. A Lyapunov function allows us to
recover its conclusion on each compact subset of a noncompact space.

\begin{proposition}\label{Kingman-noncompact}
Suppose that $X$ is locally compact, $P$ is Markov--Feller and admits a Lyapunov function.
Let $(\phi_n)_{n\geq1}$ be a $P$-subadditive sequence of upper semicontinuous
functions, with $\phi_1$ bounded above. Then $(\phi_n)_{n\geq1}$ is
asymptotically negative if and only if
\[
\inf_{n\geq1}\int\phi_n\,d\mu<0
\qquad\text{for every }\mu\in\cI(P).
\]
\end{proposition}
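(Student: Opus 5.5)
The plan is to prove the two directions separately; the substantive one is the "if" direction.

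\emph{Necessity.} Assume asymptotic negativity and fix $\mu\in\cI(P)$. By Proposition~\ref{prop:tightness}, $\int V\,d\mu\leq C_V$, so $\mu$ is tight: for each $\varepsilon>0$ there is a compact $K$ with $\mu(X\setminus K)<\varepsilon$. Write $M=\sup\phi_1^+$. Subadditivity gives $\phi_n\leq\sum_{k<n}P^k\phi_1\leq nM$, and stationarity gives $\int P^k\phi_1\,d\mu=\int\phi_1\,d\mu$. Hence
\[
\frac1n\int\phi_n\,d\mu\leq\frac1n\sup_K\phi_n+M\varepsilon .
\]
Choose $\varepsilon$ small compared with $-\limsup_n\frac1n\sup_K\phi_n$. (Note that $K$ depends on $\varepsilon$, but this is harmless: fix $\varepsilon$ first, then take $K$.) This yields $\int\phi_n\,d\mu<0$ for large $n$.

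\emph{Sufficiency.} Argue by contradiction. Suppose there are a compact $K$, points $x_j\in K$ and $n_j\to\infty$ with $\phi_{n_j}(x_j)\geq -\varepsilon_j n_j$, where $\varepsilon_j\to0$. The goal is to produce $\mu\in\cI(P)$ with $\int\phi_N\,d\mu\geq0$ for every $N$. Fix $N$. The standard Kingman/Steele decomposition via subadditivity gives, for each $r\in\{0,\dots,N-1\}$,
\[
\phi_{n}\leq P^{r}\text{-shifted sums of }\phi_N\text{ blocks}+\text{boundary terms}\leq \sum_{i}P^{r+iN}\phi_N+2N M .
\]
Averaging over $r$ yields
\[
\phi_n(x)\leq\frac1N\sum_{k=0}^{n-1}P^k\phi_N(x)+O(NM).
\]
Dividing by $n_j$ at $x=x_j$ gives
\[
-\varepsilon_j\leq\frac1N\int\phi_N\,d\eta_{n_j}+O(NM/n_j),
\qquad \eta_{n}=\frac1{n}\sum_{k<n}(P^k)^*\delta_{x_n}.
\]
By Proposition~\ref{prop:tightness}, the sequence $\eta_{n_j}$ is tight and has a cluster point $\mu\in\cI(P)$. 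By a diagonal argument we may take the same subsequence for all $N$.

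\emph{Passing to the limit.} This is the main obstacle: we need $\limsup_j\int\phi_N\,d\eta_{n_j}\leq\int\phi_N\,d\mu$ for an upper semicontinuous $\phi_N$ that is bounded above (by $NM$) but possibly $-\infty$. For weak$^*$ convergence, the portmanteau theorem applies to usc functions bounded above: approximate $\phi_N$ from above by a decreasing sequence of bounded continuous functions $\psi_\ell\downarrow\phi_N$, which exists on a metric space via Lipschitz inf-convolutions truncated below. Then $\limsup_j\int\phi_N\,d\eta_{n_j}\leq\int\psi_\ell\,d\mu$, and monotone convergence gives $\int\psi_\ell\,d\mu\downarrow\int\phi_N\,d\mu$. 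Hence $\int\phi_N\,d\mu\geq0$ for all $N$, contradicting the hypothesis $\inf_N\int\phi_N\,d\mu<0$.

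Some care is needed with the decomposition, since $\phi_n$ may be $-\infty$. The inequality $\phi_{n+m}\leq\phi_m+P^m\phi_n$ is still meaningful for functions bounded above, so the telescoping goes through with values in $[-\infty,\infty)$.
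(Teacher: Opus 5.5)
\emph{Necessity: a quantifier gap.} Asymptotic negativity gives, for each compact $K$, a rate $a_K\eqdef-\limsup_n\frac1n\sup_K\phi_n>0$. The definition gives no lower bound on $a_K$ that is uniform in $K$.

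Your estimate needs a small correction first: the first term should carry the factor $\mu(K)$, i.e.\ $\int_K\phi_n\,d\mu\leq\mu(K)\sup_K\phi_n$. Even then it gives only $\limsup_n\frac1n\int\phi_n\,d\mu\leq-(1-\varepsilon)a_{K_\varepsilon}+M\varepsilon$, where $K_\varepsilon$ must grow as $\varepsilon\to0$. Nothing you have established prevents $a_{K_\varepsilon}$ from decaying at least as fast as $\varepsilon$. Saying ``fix $\varepsilon$ first, then take $K$'' does not remove the dependence of the rate on $\varepsilon$.

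Under the standing hypotheses a rate uniform in $K$ does exist. However, it is only a by-product of the paper's proof of the converse, which presupposes the stationary criterion you are proving, so it cannot be invoked here.

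The repair is to argue pointwise, as the paper does. Singletons are compact, so $\limsup_n\phi_n(x)/n<0$ for every $x$. Since $\phi_n/n\leq M$, Fatou's lemma applied to $M-\phi_n/n\geq0$ gives $\limsup_n\frac1n\int\phi_n\,d\mu\leq\int\limsup_n\frac{\phi_n}{n}\,d\mu<0$.

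\emph{Sufficiency: wrong summation range, then a valid alternative route.} Averaging your decompositions over $r$ gives exactly $\phi_n\leq\frac1N\sum_{k=0}^{n-N}P^k\phi_N+2NM$, since each $k$ with $k+N\leq n$ occurs once. Extending the sum to $k\leq n-1$ adds $N-1$ terms $P^k\phi_N$ that are bounded above but not below; they may equal $-\infty$. So your displayed inequality with $\sum_{k=0}^{n-1}$ can fail. The paper's block inequality likewise stops its sum at $k_n-1<n-m_0$.

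Keep the range $k\leq n-N$ and use $\eta_n'=\frac1{n-N+1}\sum_{k=0}^{n-N}(P^k)^*\delta_{x_n}$. It lies within total variation $2(N-1)/n$ of $\eta_n$, so it has the same weak$^*$ cluster points. Your limiting step (decreasing Lipschitz approximations of the upper semicontinuous $\phi_N$, plus monotone convergence) then goes through.

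With this repair your sufficiency proof is correct and genuinely different from the paper's:
\begin{itemize}
\item You extract the contradicting sequence first and get one cluster point $\mu\in\cI(P)$ with $\int\phi_N\,d\mu\geq0$ for all $N$ simultaneously. No diagonal argument is needed, since the limit does not depend on $N$.
\item The paper first uses compactness of $\cI(P)$ and upper semicontinuity of $\mu\mapsto\int\phi_m\,d\mu$ to obtain a common $m_0$ and $c>0$ with $\int\phi_{m_0}\,d\mu<-c$ on all of $\cI(P)$. Only then does it run the contradiction with this single $m_0$.
\end{itemize}
Your route skips that uniformization step. The paper's route yields the stronger conclusion $\limsup_n\frac1n\sup_K\phi_n\leq-c/m_0$, with one constant for every compact $K$.
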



\begin{proof}
If $(\phi_n)_{n\geq1}$ is asymptotically negative, then
$\limsup_{n\to \infty}\phi_n(x)/n<0$ for every $x$. Choose a finite upper bound
$M$ for $\phi_1$; then $\phi_n/n\leq M$. Reverse Fatou shows that, for any
$\mu\in\cI(P)$,
\[
\limsup_{n\to \infty}\int\frac{\phi_n}{n}\,d\mu
\leq\int\limsup_{n\to \infty}\frac{\phi_n}{n}\,d\mu<0.
\]
This proves the first implication.

Conversely, assume the condition on stationary measures. For every $\mu\in \cI(P)$ there exists $m\geq1$ such that $\int\phi_m\,d\mu<0$. Using upper semicontinuity, the inequality still holds on a neighborhood of $\mu$ 
in $\cI(P)$. Since $\cI(P)$ is compact by Proposition~\ref{prop:tightness}, taking a finite covering of  $\cI(P)$ and a common multiple of the integers $m_i$, one can find a common integer $m_0$ and positive number $c$ such that $\int\phi_{m_0}\,d\mu <-c$ for every $\mu\in \cI(P)$.

Subadditivity and boundedness above of the $\phi_n$ give, for large $n$ and all $x\in X$,
\begin{equation}\label{eq:block-inequality}
 \phi_n(x)\leq \frac{1}{m_0}\sum_{k=0}^{k_{n}-1}P^k\phi_{m_0}(x)+O(1)
\end{equation}
with $k_{n}=(\lfloor n/m_0\rfloor-1)m_0$, and a uniform remainder in $x$ and $n$ (see \cite[Lem.~2.2.5]{BMbook} for details).
Fix a compact $K$ and $0<\delta<c/m_0$. If the estimate $\sup_{x\in K}\phi_n\leq-\delta n$ failed for arbitrarily large $n$,
there would be $n_j\to \infty$ and $x_j\in K$ with
$\phi_{n_j}(x_j)\geq-\delta n_j$. Set
\[
\eta_j=\frac1{k_{n_j}}\sum_{k=0}^{k_{n_j}-1}(P^k)^*\delta_{x_j}.
\]
Inequality~\eqref{eq:block-inequality} yields
\[
-m_0\delta\leq\frac{k_{n_j}}{n_j}\int\phi_{m_0}\,d\eta_j
+O(1/n_j).
\]
By Proposition~\ref{prop:tightness}, a subsequence of $(\eta_j)_{j\geq1}$
converges to some $\mu\in\cI(P)$. Upper semicontinuity and
boundedness above of $\phi_{m_0}$ then give
$-m_0\delta\leq \int\phi_{m_0}\,d\mu\leq-c$,
a contradiction.
\end{proof}

\subsection{Uniform negativity via a corrector}
Proposition~\ref{Kingman-noncompact} gives uniform estimates on compact
subsets of $X$. We will also need estimates that are uniform on the
whole space. The next proposition provides them through a corrector.

\begin{proposition}\label{prop:corrector}
	Let $(\phi_n)_{n\geq1}$ and $(\psi_n)_{n\geq1}$ be $P$-subadditive sequences. Assume that
	$(\phi_n)_{n\geq1}$ is asymptotically negative, that $\phi_1$ and $\psi_1$ are
	bounded above on $X$, that $(\psi_n)_{n\geq1}$ is uniformly bounded above
	on every compact $K\subset X$,
	and that, for some
	$m\geq1$ and compact $K_0\subset X$,
	\[
	\sup_{x\in X\setminus K_0}\psi_m<0.
	\]
	Then, for every sufficiently large $c>0$, the sequence $(\phi_n^{(c)})_{n\geq1}$, defined by $\phi_n^{(c)}\eqdef\phi_n+c\psi_n$, is asymptotically uniformly negative, in the sense that
	\[
	\limsup_{n\to \infty}\frac1n\sup_{x\in X} \phi_n^{(c)}<0.
	\]
\end{proposition}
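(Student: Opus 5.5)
The plan is to reduce the claim to a single-block inequality with an $x$-dependent block length, and then close it by induction using $P$-subadditivity. First note that $\phi^{(c)}_n=\phi_n+c\psi_n$ is again $P$-subadditive, since $P^m$ is linear and positive. Also, $\phi^{(c)}_1$ is bounded above, because $\phi_1\leq M$ and $\psi_1\leq M'$ for finite constants. Hence $\phi_n\leq nM$ and $\psi_n\leq nM'$ on $X$, and $\phi^{(c)}_n\leq n\max\{M+cM',0\}$ on $X$ for all $n$.

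\emph{Step 1 (two local estimates).} I read ``uniformly bounded above on every compact'' as giving $B\eqdef\sup_{n\geq1}\sup_{K_0}\psi_n<\infty$. Put $b\eqdef-\sup_{X\setminus K_0}\psi_m>0$.
\begin{itemize}
\item Outside $K_0$ we have $\phi^{(c)}_m\leq mM-cb$. Fix $c$ so large that $mM-cb\leq-1$.
\item On $K_0$, asymptotic negativity of $(\phi_n)$ gives $a>0$ and $n_1$ with $\sup_{K_0}\phi_n\leq-an$ for $n\geq n_1$. Choose $N\geq\max\{m,n_1\}$ with $aN\geq cB+1$. Then $\phi^{(c)}_N\leq-aN+cB\leq-1$ on $K_0$.
\end{itemize}
The order of choices matters: $c$ is fixed first, by the behaviour outside $K_0$, and only then $N$, using the bound $B$ uniform in $n$. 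Define $\tau(x)=m$ for $x\notin K_0$ and $\tau(x)=N$ for $x\in K_0$. Then $\phi^{(c)}_{\tau(x)}(x)\leq-1$ and $\tau(x)\leq N$ for every $x\in X$.

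\emph{Step 2 (gluing by induction).} Set $D\eqdef N\max\{M+cM',0\}+1$. I claim that
\[
\phi^{(c)}_n(x)\leq-\frac nN+D\qquad\text{for all }n\geq1,\ x\in X.
\]
For $n\leq N$ this follows from the linear upper bound above. For $n>N$, fix $x$. Subadditivity applied pointwise with the integer $\tau(x)<n$ gives
\[
\phi^{(c)}_n(x)\leq\phi^{(c)}_{\tau(x)}(x)+P^{\tau(x)}\phi^{(c)}_{n-\tau(x)}(x).
\]
The induction hypothesis holds uniformly in the starting point, so $P^{\tau(x)}$ of it is bounded by $-(n-\tau(x))/N+D$, using $P\1=\1$ and positivity. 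Together with $\phi^{(c)}_{\tau(x)}(x)\leq-1\leq-\tau(x)/N$ this yields $-n/N+D$. Dividing by $n$ gives
\[
\limsup_{n\to\infty}\frac1n\sup_X\phi^{(c)}_n\leq-\frac1N<0,
\]
and the same argument works for every larger $c$, with $N$ adjusted.

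\emph{Main obstacle.} The delicate point is Step 1. A single common block length cannot work: on $K_0$ the term $c\psi_n$ may be positive of size $cB$, while outside $K_0$ the negativity of $\psi$ is known only at time $m$. The variable block length $\tau$ resolves this. It relies crucially on the bound $B$ being uniform in $n$; otherwise the required $N$ could not be chosen after $c$. A minor technical check is that $P^{\tau(x)}$ applied to functions that are only bounded above, possibly taking the value $-\infty$, is legitimate; this is covered by the conventions of Section~\ref{sec:drift}.
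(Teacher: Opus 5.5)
Your proof is correct and follows essentially the same route as the paper. The paper first fixes $c$ using the time-$m$ estimate off $K_0$, then picks $n_1$ on $K_0$ using the uniform bound on $(\psi_n)$. It concludes with Lemma~\ref{partition} applied to the covering $X=(X\setminus K_0)\cup K_0$, and that lemma's proof is precisely your variable-block induction (with $\lambda'=-1/N$).
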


We begin with a lemma that is also useful on its own.

\begin{lemma}\label{partition}
Let $(\Phi_n)_{n\geq1}$ be a $P$-subadditive sequence with $\Phi_1$ bounded above. Suppose that $X=X_1\cup\cdots\cup X_r$ and, for some integers $n_i$,
$\sup_{x\in X_i}\frac{1}{n_i}\Phi_{n_i}<\lambda$ for every $i=1,\dots,r$. Then $\sup_{x\in X}\frac1n\Phi_n<\lambda$ for all sufficiently large $n$.
\end{lemma}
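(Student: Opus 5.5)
We prove Lemma~\ref{partition} by a block decomposition along the trajectory, guided by which piece of the partition the current position lies in. First, by subadditivity with $\Phi_1$ bounded above, say $\Phi_1\leq M$, every $\Phi_k$ with $k\leq N\eqdef\max_i n_i$ is bounded above by $kM$. Choose $\lambda'<\lambda$ with $\sup_{X_i}\Phi_{n_i}\leq n_i\lambda'$ for all $i$; this is possible because each supremum is strictly less than $\lambda$ and there are finitely many $i$. Measurability is handled by replacing the partition with the disjoint Borel sets $X_i'=X_i\setminus(X_1\cup\dots\cup X_{i-1})$. This is harmless, since a supremum over a subset only decreases. We may assume the $X_i$ are Borel, as is implicit.

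We define a random stopping sequence on the Markov chain $(X_k)$ started at $x$. Put $\tau_0=0$. Given $\tau_j$, let $i$ be the index with $X_{\tau_j}\in X_i'$ and set $\tau_{j+1}=\tau_j+n_i$. To transfer this to the operator setting without probability, we prove by induction on $n$ the deterministic inequality
\[
\Phi_n(x)\leq \lambda' n + NM' \qquad\text{for all }x\in X,\ n\geq1,
\]
where $M'\eqdef\max(M,0)+|\lambda'|$ (any constant independent of $n$ will do).

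For the inductive step, take $x\in X_i'$. If $n\leq N$, the bound $\Phi_n\leq nM$ already gives the claim. If $n>N\geq n_i$, subadditivity gives
\[
\Phi_n(x)\leq\Phi_{n_i}(x)+P^{n_i}\Phi_{n-n_i}(x)\leq n_i\lambda'+\lambda'(n-n_i)+NM'.
\]
The second step applies the induction hypothesis pointwise inside $P^{n_i}$, using the positivity of $P$ and $P\1=\1$. Note that we use the subadditivity in the order $\phi_{n+m}\leq\phi_m+P^m\phi_n$ with $m=n_i$, which is exactly the form given. The induction closes, so $\frac1n\Phi_n\leq\lambda'+NM'/n<\lambda$ once $n>NM'/(\lambda-\lambda')$, uniformly in $x$.

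The only delicate point is the order of terms in subadditivity: the first block must be the one determined by the starting point, so that the bound on $\Phi_{n_i}$ over $X_i$ applies. The remaining length is then handled inside $P^{n_i}$ by the uniform inductive bound. Since the induction hypothesis is uniform over all of $X$, no tracking of where the chain lands is needed, and the stopping sequence above serves only as intuition.
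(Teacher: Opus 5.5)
Your proof is correct and is essentially the paper's argument. The paper also subtracts $n\lambda'$, peels off the first block of length $n_i$ determined by the starting point via $\Psi_n\leq\Psi_{n_i}+P^{n_i}\Psi_{n-n_i}$, and runs a strong induction showing $\sup_X\Psi_n\leq\max_{j\leq N}\sup_X\Psi_j$; your explicit constant $NM'$ simply plays the role of that maximum, and the disjointification and stopping-time discussion are unnecessary but harmless.
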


\begin{proof}
Choose $\lambda'<\lambda$ with $\sup_{x\in X_i}\Phi_{n_i}/n_i<\lambda'$ and set $\Psi_n=\Phi_n-n\lambda'$, $M_n=\sup_{x\in X}\Psi_n$. Notice that $(\Psi_n)_{n\geq1}$ is $P$-subadditive. For $x\in X_i$ and $n>n_i$ we have $\Psi_{n_i}(x)\leq0$, hence by subadditivity	
	\[
	\Psi_n(x)\leq P^{n_i}\Psi_{n-n_i}(x)+\Psi_{n_i}(x)
	\leq P^{n_i}\Psi_{n-n_i}(x)
	\leq M_{n-n_i}.
	\]
	Put \(N_0=\max_{1\leq i\leq r} n_i\). For \(n>N_0\) and using that $X_1,\dots,X_r$ is a covering of $X$, we obtain
	$M_n\leq \max_{1\leq i\leq r} M_{n-n_i}$. 	By induction, 
	$ M_n\leq M\eqdef \max_{1\leq j\leq N_0}M_j$ for every $n\geq1$. Indeed, this is immediate for \(1\leq n\leq N_0\). If \(n>N_0\) and the claim holds for every \(k<n\), then \(1\leq n-n_i<n\) for every \(i\), and thus
$M_n\leq \max_{1\leq i\leq r}M_{n-n_i}\leq M$,
which completes the induction. 
	Therefore
	$\sup_{x\in X} \frac1n{\Phi_n}\leq \lambda'+\frac{M}{n}$,
	which is less than $\lambda$ for $n$ large enough.
\end{proof}

We now prove Proposition~\ref{prop:corrector}.
\begin{proof}[Proof of Proposition~\ref{prop:corrector}]
Since $\phi_m$ is bounded above, for every sufficiently large $c$, we have 
$\sup_{x\in X\setminus K_0}(\phi_m+c\psi_m)<0$. Fix such a $c>0$.
By asymptotic negativity there is $\delta>0$ such that
$\sup_{x\in K_0}\phi_n\leq-\delta n$ for all large $n$.
The sequence $(\psi_n)_{n\geq1}$ is uniformly bounded above on $K_0$, so we
can choose $n_1$ with $\sup_{x\in K_0}(\phi_{n_1}+c\psi_{n_1})<0$.
Choose $\lambda<0$ larger than both normalized suprema at times $m$
and $n_1$. Lemma~\ref{partition}, applied to
$\Phi_n=\phi_n+c\psi_n$ and the covering
$X=(X\setminus K_0)\cup K_0$, yields the conclusion.
\end{proof}

As an illustration, the drift of a Lyapunov function supplies a
natural corrector.
\begin{corollary}\label{Kingman-noncompact-uniform}
Suppose that $P$ admits a Lyapunov function $V$ and that $(\phi_n)_{n\geq1}$
is a $P$-subadditive sequence with $\phi_1$ bounded above,
which is asymptotically negative. Then, for every sufficiently large $c>0$,
\[
\limsup_{n\to \infty}\frac1n\sup_{x\in X}
\bigl(\phi_n+c(P^n\log V-\log V)\bigr)<0.
\]
\end{corollary}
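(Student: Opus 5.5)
We apply Proposition~\ref{prop:corrector} with the corrector sequence $\psi_n\eqdef P^n\log V-\log V$. The first task is to check that $(\psi_n)_{n\geq1}$ is $P$-subadditive, and in fact additive. Since $V\geq1$, $\log V\geq0$; by Jensen and Proposition~\ref{LyapunovFunctionsProperties}(2), $P^n\log V\leq\log P^nV\leq\log V+\log C_V$, so every $P^n\log V$ is finite. Hence
\[
\psi_m+P^m\psi_n=P^m\log V-\log V+P^{m+n}\log V-P^m\log V=\psi_{n+m},
\]
and subadditivity holds with equality.

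Next we verify the boundedness hypotheses. By the same Jensen estimate, $\psi_n=P^n\log V-\log V\leq\log(P^nV/V)\leq\log C_V$ for all $n$ on all of $X$. So $\psi_1$ is bounded above, and $(\psi_n)$ is uniformly bounded above on every compact set, indeed on the whole space. The hypotheses that $\phi_1$ is bounded above and that $(\phi_n)$ is asymptotically negative are assumed directly.

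The remaining hypothesis is negativity of some $\psi_m$ outside a compact set. Take $m=1$, fix $q_V\in(q,1)$, and use Proposition~\ref{LyapunovFunctionsProperties}(3): outside the compact $K_1$ we have $PV/V\leq q_V$. Jensen then gives
\[
\psi_1\leq\log(PV/V)\leq\log q_V<0 \quad\text{on } X\setminus K_1,
\]
so $\sup_{X\setminus K_1}\psi_1<0$.

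With these checks done, Proposition~\ref{prop:corrector} yields the conclusion for all sufficiently large $c$. The only subtle point is making sense of $P^n\log V$ as a finite function so that the cancellations above are legitimate, and this follows from Jensen's inequality together with the bound $P^nV\leq C_VV$.
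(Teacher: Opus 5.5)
Your proof is correct and follows the paper's argument essentially verbatim. Like the paper, you take the $P$-additive corrector $\psi_n=P^n\log V-\log V$, use Jensen and Proposition~\ref{LyapunovFunctionsProperties} to get $\psi_n\leq\log C_V$ on $X$ and $\psi_1\leq\log q_V<0$ off a compact set, and then invoke Proposition~\ref{prop:corrector}; your remark that $P^n\log V$ is finite, which legitimizes the cancellation in the additivity identity, makes explicit a detail the paper leaves implicit.
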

\begin{proof}
Set $\psi_n=P^n\log V-\log V$. The sequence is $P$-additive.
By Jensen's inequality and Proposition~\ref{LyapunovFunctionsProperties},
$\psi_n\leq\log(P^nV/V)\leq\log C_V$ on $X$, and
$\psi_1\leq\log q_V<0$ outside a compact set.
Proposition~\ref{prop:corrector} applies.
\end{proof}

We conclude with another application of Lemma~\ref{partition}.
\begin{corollary}
	Let $X$ be compact and $(\phi_n)_{n\geq1}$ a $P$-subadditive sequence
	of upper semicontinuous functions. If, for every $x\in X$,
	there is $n\geq1$ such that $\phi_n(x)<0$, then
	$\sup_{x\in X}\phi_n<0$ for all sufficiently large $n$.
\end{corollary}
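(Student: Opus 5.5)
We will apply Lemma~\ref{partition} to the covering of $X$ by finitely many open sets. Fix $x\in X$ and choose $n_x\geq1$ with $\phi_{n_x}(x)<0$.

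Since $\phi_{n_x}$ is upper semicontinuous, the set $U_x=\{y:\phi_{n_x}(y)<0\}$ is an open neighborhood of $x$. By compactness, finitely many of these sets $U_{x_1},\dots,U_{x_r}$ cover $X$. Write $n_i=n_{x_i}$.

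On the compact set $\overline{U_{x_i}}$, or more carefully on a compact piece, the supremum of $\phi_{n_i}$ need not be attained inside $U_{x_i}$. To get a strict uniform bound we shrink the cover, as follows.
\begin{enumerate}
\item For each $x$, upper semicontinuity gives $c_x>0$ and an open $W_x\ni x$ with $\phi_{n_x}<-c_x$ on $W_x$. Take $W_x=\{\phi_{n_x}<\phi_{n_x}(x)/2\}$, which is open and contains $x$.
\item Extract a finite subcover $W_{x_1},\dots,W_{x_r}$.
\item Set $X_i=W_{x_i}$. Then $\sup_{X_i}\phi_{n_i}/n_i\leq -c_{x_i}/(2n_i)$, with $c_{x_i}=|\phi_{n_{x_i}}(x_i)|$.
\end{enumerate}

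Now let $\lambda<0$ be larger than $\max_i\bigl(-c_{x_i}/(2n_i)\bigr)$ and apply Lemma~\ref{partition} to $\Phi_n=\phi_n$. This requires $\phi_1$ to be bounded above, which holds because an upper semicontinuous function $X\to[-\infty,\infty)$ on a compact space attains a finite maximum. The lemma gives $\sup_X\phi_n/n<\lambda<0$ for all large $n$, hence $\sup_X\phi_n<0$.

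The only subtle point is obtaining a strict, uniform negative bound on each piece. Working with the sets $\{\phi_{n_x}<\phi_{n_x}(x)/2\}$ rather than $\{\phi_{n_x}<0\}$ settles this. The rest of the argument is a direct invocation of Lemma~\ref{partition}.
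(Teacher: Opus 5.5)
Your proof is correct and follows the paper's argument. Both obtain a finite cover from upper semicontinuity and compactness and then apply Lemma~\ref{partition}; the paper takes $\lambda=0$ with pieces on which $\phi_{n_i}<0$, while you take some $\lambda<0$ after building in a uniform negative margin, which makes the strict-supremum hypothesis of the lemma explicit. One small slip: $\phi_{n_x}(x)$ may equal $-\infty$, in which case your set $\{\phi_{n_x}<\phi_{n_x}(x)/2\}$ is empty, so take instead $W_x=\{\phi_{n_x}<-c_x\}$ for any finite $c_x>0$ with $\phi_{n_x}(x)<-c_x$.
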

\begin{proof}
	By upper semicontinuity and compactness, there are a finite
	cover $(B_i)_{1\leq i\leq r}$ of $X$ and integers $(n_i)_{1\leq i\leq r}$ such that
	$\phi_{n_i}<0$ on $B_i$. Lemma~\ref{partition} with
	$\lambda=0$ gives the result.
\end{proof}

\section{Mostly contracting random maps and spectral gap}
\label{sec:mostly}

Let $(T,\mathscr A,p)$ be a probability space,
$(\Omega,\mathscr F,\Pp)=(T^{\N},\mathscr A^{\N},p^{\N})$, and let
$(X,d)$ be locally compact and separable.  We assume $d\leq1$;
any other finite bound can be reduced to this case by rescaling.
A \emph{Bernoulli random map} is a measurable map
$f:\Omega\times X\to X$ of the form $f_\omega=f_{\omega_0}$.
We write $f_\omega^n=f_{\omega_{n-1}}\circ\cdots\circ f_{\omega_0}$.
Its transition probability is
$P(x,B)=\Pp(f_\omega(x)\in B)$, and its annealed Koopman operator is
\[
        P\varphi(x) = \int \varphi(y) \, P(x,dy)
        = \int \varphi(f_\omega(x))\,d\Pp,
\]
which is a Markov--Feller operator. We  call the $P$-stationary probabilities \emph{$f$-stationary measures} and denote by $\cI(f)=\cI(P)$ their set. We assume that \(f_\omega\) is Lipschitz for $\Pp$-a.e.~$\omega$.

For a locally Lipschitz map \(g:X\to X\), define
\[
 L_rg(x)
 \eqdef
 \Lip(g|_{B(x,r)}),
 \qquad
 Lg(x)\eqdef\lim_{r\downarrow 0}L_rg(x).
\]
Then \(Lg\) is upper semicontinuous and 
$L(g\circ h)(x)\leq Lg(h(x))Lh(x)$,~\cite[Lem.~3.1.1]{BMbook}.  Hence the sequence $(\phi_n)_{n\geq1}$ is $P$-subadditive, where 
\[
        \phi_n(x)\eqdef \int\log Lf_\omega^n(x)\,d\Pp.
\]
Under $\int\log^+\Lip(f_\omega)\,d\Pp<\infty$, each $\phi_n$ is upper semicontinuous by reverse Fatou, using the integrable upper bound $\log Lf_\omega^n(x)\leq\sum_{j=0}^{n-1}\log^+\Lip(f_{\omega_j})$. For $\mu\in \cI(f)$, define
\[
 \lambda(\mu)\eqdef \lim_{n\to \infty}\frac1n\int\phi_n\,d\mu
 =\inf_{n\geq1}\frac1n\int\phi_n\,d\mu,
 \qquad
 \lambda(f)\eqdef \sup_{\mu\in\cI(f)}\lambda(\mu).
\]
We call $f$ \emph{mostly contracting} if 
$\lambda(f)<0$.
If, in addition, $P$ admits a Lyapunov function, then under the above
moment assumption Proposition~\ref{Kingman-noncompact} shows that
$(\phi_n)_{n\geq1}$ is asymptotically negative in the sense of
Definition~\ref{assymp-neg}.

Let $V$ be a Lyapunov function. For $\alpha,\beta>0$ set
\[
 \Delta_{\alpha,\beta}(x,y)\eqdef d(x,y)^\alpha V(x)^\beta V(y)^\beta,
 \qquad
 [\varphi]_{\alpha,\beta}\eqdef \sup_{x\neq y}\frac{|\varphi(x)-\varphi(y)|}{\Delta_{\alpha,\beta}(x,y)},
\]
and, fixing $x_0\in X$,
\[
 \Hol_{\alpha,\beta}(X)\eqdef \{\varphi\in C(X;\C):[\varphi]_{\alpha,\beta}<\infty\},
 \qquad
 \|\varphi\|_{\alpha,\beta}\eqdef |\varphi(x_0)|+[\varphi]_{\alpha,\beta}.
\]

\begin{mainthm}
\label{thm:quasicompact}
Let \((T,\mathscr A,p)\) be a probability space, consider
$(\Omega,\mathscr F,\Pp)
        =(T^{\N},\mathscr A^{\N},p^{\N})$,
and let \(X\) be a locally compact separable metric space with distance $d\leq1$. 
Let \(f:\Omega\times X \to X\) be a mostly contracting Lipschitz random map such that for some \(\eta_0>0\),
\begin{equation}\label{eq:random-map-moment}
        \int \Lip(f_\omega)^{\eta_0}\,d\Pp<\infty.
\end{equation}
Assume that its associated annealed Markov operator \(P\) admits a Lyapunov function. Then there is \(\varepsilon_0>0\) such that, for any $\alpha,\beta>0$ with $\beta<\varepsilon_0$ and $\alpha/\beta<\varepsilon_0$, 
\[
        P:\Hol_{\alpha,\beta}(X)\longrightarrow
        \Hol_{\alpha,\beta}(X)
\]
is quasi-compact.
\end{mainthm}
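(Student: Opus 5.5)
The plan is to verify the Doeblin--Fortet (Ionescu-Tulcea--Marinescu, Hennion) criterion on $\Hol_{\alpha,\beta}(X)$. We want an iterate $n_0$, constants $\theta<1$ and $C>0$, and a weaker norm $|\cdot|_w$ such that
\[
[P^{n_0}\varphi]_{\alpha,\beta}\leq\theta[\varphi]_{\alpha,\beta}+C|\varphi|_w .
\]
We also need the unit ball of $\Hol_{\alpha,\beta}$ to be relatively compact for $|\cdot|_w$. For the weak norm I would take the weighted sup norm $|\varphi|_w=\sup_x|\varphi(x)|/V(x)^{\gamma}$ for a small $\gamma$, for instance $\gamma=2\beta$. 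The heart of the argument is the contraction on average
\[
\int\Delta_{\alpha,\beta}(f^n_\omega x,f^n_\omega y)\,d\Pp\leq C\rho^n\Delta_{\alpha,\beta}(x,y).
\]
Given this, $|P^n\varphi(x)-P^n\varphi(y)|\leq[\varphi]_{\alpha,\beta}\int\Delta_{\alpha,\beta}(f^n_\omega x,f^n_\omega y)\,d\Pp$. Taking $n_0$ with $C\rho^{n_0}<1$ then gives the Lasota--Yorke inequality even without a weak term, so $\theta=C\rho^{n_0}$.

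To prove the contraction, I would start by applying Proposition~\ref{Kingman-noncompact} and Corollary~\ref{Kingman-noncompact-uniform}. Mostly contracting, the moment condition~\eqref{eq:random-map-moment} and the Lyapunov function together give, for large $c$,
\[
\sup_x\bigl(\phi_N+c(P^N\log V-\log V)\bigr)<-\delta N .
\]
We then pass from logarithms to small powers, in the spirit of Lemma~\ref{lem:one-sided-small-powers}. Consider
\[
Z(\omega,x)=\log L f^N_\omega(x)+c\bigl(\log V(f^N_\omega x)-\log V(x)\bigr).
\]
We need $\int e^{sZ}\,d\Pp<1$ uniformly in $x$ for $s$ small. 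The obstacle is that Corollary~\ref{Kingman-noncompact-uniform} controls $\int\log V\circ f^N$, not $\log\int V\circ f^N$. I would handle this with a uniform version of the Taylor argument, using two bounds: exponential moments of $\log\Lip$ coming from~\eqref{eq:random-map-moment}, and $\int (V\circ f^N/V)^{s}\leq C_V^{s}$ from Proposition~\ref{LyapunovFunctionsProperties}. These bound $\int Z^2e^{s|Z|^+}$ uniformly, after truncating the negative part as in the lemma. We would take $\alpha=s$ and $\beta=cs/2$, or more flexibly $\alpha/\beta$ small so that the $V$-weight dominates. This gives the restriction $\alpha/\beta<\varepsilon_0$.

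The local Lipschitz constant only controls nearby pairs. I would therefore split pairs into two cases.
\begin{itemize}
\item \emph{Close pairs}, with $d(x,y)<r$: here $d(f x,f y)\leq L_r f(x)\,d(x,y)$, with $L_r\to L$ handled by upper semicontinuity and compactness of sublevel sets of $V$.
\item \emph{Far pairs}: here $d\leq1$ gives $\Delta\leq (d/r)^\alpha\ldots$. The $V^\beta$ factors contract by $q_V$ outside compacts, by Proposition~\ref{LyapunovFunctionsProperties}(iii). On compacts, synchronization is needed, but for quasi-compactness it suffices to absorb the compact far-pair region into the weak-norm term. On such pairs $|P^n\varphi(x)-P^n\varphi(y)|\leq 2\sup_{K}|P^n\varphi|\leq C|\varphi|_w$.
\end{itemize}
This is why Hennion's form with a weak term is convenient. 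I expect this combination of local contraction with far pairs, uniformly, to be the main technical obstacle.

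Two facts remain to be checked. First, $P$ is bounded on $\Hol_{\alpha,\beta}$ and $|P^{n_0}\varphi|_w\leq C|\varphi|_w$; both follow from $PV^\gamma\leq C V^\gamma$, by Jensen and~\eqref{eq:geometric-drift}. Second, compactness of the embedding: a sequence bounded in $\|\cdot\|_{\alpha,\beta}$ is equicontinuous on each compact $\{V\leq R\}$ and satisfies $|\varphi(x)|\leq C V(x)^{\beta}$. Arzelà--Ascoli on each $\{V\leq R\}$ then gives a subsequence converging locally uniformly. The tails satisfy $\sup_{V>R}|\varphi|/V^{2\beta}\leq CR^{-\beta}\to0$, so the subsequence converges in $|\cdot|_w$. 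Hennion's theorem then yields quasi-compactness, with essential spectral radius at most $\theta^{1/n_0}<1$.
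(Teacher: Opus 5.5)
Your final architecture matches the paper's: Hennion's criterion with the weak norm $|\cdot|_\gamma$, near-diagonal contraction on a compact set, contraction at infinity, and the separated part of a compact absorbed into the weak term. As written, however, the argument has genuine holes.

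\textbf{Global contraction and reassembly.} The global inequality $\int\Delta_{\alpha,\beta}(f^n_\omega x,f^n_\omega y)\,d\Pp\leq C\rho^n\Delta_{\alpha,\beta}(x,y)$, which you call the heart of the argument, is false under these hypotheses. It forces a unique stationary measure, but a mostly contracting map can have several; take two disjoint invariant intervals, each contracted to a point. This is why it needs stationary proximality (Theorem~\ref{thm:product-weighted-global-contraction}), and why the theorem only claims quasi-compactness.

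Once you drop it, your reassembly of the Lasota--Yorke inequality does not close. Bounding far pairs outside a compact $K$ through $d\leq1$ costs a factor $r^{-\alpha}$, so you need $r^{-\alpha}C_V^\beta\sup_{X\setminus K}(P^NV/V)^\beta<\tfrac12$. Enlarging $K$ only improves the available bound $P^NV/V\leq q_V^N+C_V/V$ down to $q_V^N$, so $N$ must be large depending on $r$. But $r$ is a Lebesgue number that depends on $N$ and $K$, so the choices are circular. Close pairs with a point outside $K$ are not covered at all, since $L_rf\to Lf$ is not uniform at infinity.

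The fix is Proposition~\ref{prop:contractions}(iii) and Lemma~\ref{cor:one-block}: treat every pair outside $K\times K$, close or far, with the global Lipschitz constant. With $R_x=V(f^N_\omega x)/V(x)$, $2\alpha\leq\eta_0$ and $4\beta\leq1$, H\"older and the drift give $\int\Lip(f^N_\omega)^\alpha R_x^\beta R_y^\beta\,d\Pp\leq C_V^\beta(L_0^\alpha q_V^\beta)^N$ whenever $x\notin K_N$. Here $L_0\geq1$ depends only on~\eqref{eq:random-map-moment}. No $r$ appears, so you can fix $N$, then $K=K_N$, and only then obtain $r$ on $K$. That last step uses the pointwise $\limsup$ bound, reverse Fatou (Lemma~\ref{FatouReverse}) and a Lebesgue number.

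\textbf{The uniform Taylor step.} This is not justified as stated. After truncation, $\int Z_M^2e^{sZ_M^+}\,d\Pp$ is indeed uniformly bounded. But you also need $\sup_x\int\max\{Z,-M\}\,d\Pp<0$ for one fixed $M$. That does not follow from $\sup_x\int Z\,d\Pp<0$, because $Z^-$ is not uniformly integrable in $x$: at infinity $Z$ can be of order $-c\log V(x)$, and $\log Lf^N_\omega$ may equal $-\infty$.

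It can be recovered only by splitting $X$ once more, which is what the paper does in Proposition~\ref{prop:two-cocycle-estimates}. Off $K_N$, $\int e^{sZ}\,d\Pp\leq(L_0q_V^{c})^{sN}<1$ follows from the same H\"older/drift bound once $c$ is large. On $K_N$, apply Lemma~\ref{lem:one-sided-small-powers} pointwise. Then use upper semicontinuity of $x\mapsto\int e^{sZ}\,d\Pp$ (Corollary~\ref{cor:semicontinuity}), a finite cover, and Jensen to lower the exponents to a common $s$.

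Finally, you need quasi-compactness for every admissible $(\alpha,\beta)$, not just $(s,cs/2)$. Interpolating with H\"older, writing $A^\alpha B^\beta=(AB^{c})^\alpha B^{\beta-c\alpha}$, gives the full range required by the statement.
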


Global contraction on average and a spectral gap follow under an
additional assumption. For a map $g:X\to X$, its two-point motion is
$g^{[2]}:X\times X\to X\times X$, defined by
$$g^{[2]}(x,y)=(g(x),g(y)).$$ Let $f^{[2]}$ be the corresponding two-point motion random
map and $P^{[2]}$ its Markov operator. We call $f$
\emph{stationarily proximal} if every $P^{[2]}$-stationary probability
is concentrated on the diagonal $\mathcal D=\{(x,x):x\in X\}$.

\begin{mainthm}
\label{thm:product-weighted-global-contraction}
Under the assumptions of Theorem~\ref{thm:quasicompact}, assume additionally that $f$ is stationarily proximal. Then there exists $\varepsilon_0>0$ such that for any $\alpha, \beta > 0$ with $0<\beta<\varepsilon_0$ and $\alpha/\beta<\varepsilon_0$, there are $C>0$ and $\rho \in (0,1)$ satisfying
\begin{equation}
\label{eq:product-global-contraction}
 \int
 \Delta_{\alpha,\beta}
 \bigl(f_\omega^n(x),f_\omega^n(y)\bigr)\,d\Pp
 \leq
 C\rho^n\Delta_{\alpha,\beta}(x,y) \quad \text{for every \(x,y\in X\) and \(n\geq0\).}
\end{equation}
 Moreover, $P:\Hol_{\alpha,\beta}(X)\to\Hol_{\alpha,\beta}(X)$
has a spectral gap.  More precisely, \(P\) has a unique stationary probability measure \(\nu\),
and,  after changing \(C\) if
necessary,
\begin{equation}
\label{eq:product-spectral-gap}
 \left\|P^n\varphi-\int \varphi \,d\nu \right\|_{\alpha,\beta}
 \leq
 C\rho^n\bigl\|\varphi\bigr\|_{\alpha,\beta} \quad \text{
for every \(\varphi\in\Hol_{\alpha,\beta}(X)\) and \(n\geq0\)}.
\end{equation}
\end{mainthm}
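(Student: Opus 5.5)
The plan is to prove \eqref{eq:product-global-contraction} by applying the corrector machinery of Section~\ref{sec:subadditive} to the two-point motion $f^{[2]}$ on $Y=X\times X$. On $Y$ we consider the $P^{[2]}$-subadditive sequence
\[
 \Phi_n(x,y)\eqdef\log\int\Delta_{\alpha,\beta}\bigl(f^n_\omega(x),f^n_\omega(y)\bigr)\,d\Pp-\log\Delta_{\alpha,\beta}(x,y),
\]
off the diagonal. Strictly this is submultiplicative at the level of the ratio $R_n=\int\Delta_{\alpha,\beta}(f^n x,f^n y)\,d\Pp/\Delta_{\alpha,\beta}(x,y)$; we therefore work with $\sup R_n$ directly. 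The goal is $\sup_{x\neq y}R_{n_0}<1$ for some $n_0$. Iterating then gives $C\rho^n$, because $R_{n+m}\leq\sup R_n\cdot\sup R_m$, which follows from the Markov property of the two-point motion. We split $R_n$ by Cauchy--Schwarz into a distance part and a weight part:
\[
 R_n\leq\Bigl(\int\frac{d(f^nx,f^ny)^{2\alpha}}{d(x,y)^{2\alpha}}\Bigr)^{1/2}\Bigl(\int\frac{V(f^nx)^{2\beta}V(f^ny)^{2\beta}}{V(x)^{2\beta}V(y)^{2\beta}}\Bigr)^{1/2}.
\]
A cleaner alternative is Hölder with exponents tuned to the ratio $\alpha/\beta$.

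For the weight factor we use that $V^{2\beta}$, for $2\beta\leq1$, is again a Lyapunov-type function by Jensen. It satisfies $P^nV^{s}\leq (P^nV)^{s}\leq C_V^{s}V^{s}$ everywhere, and the ratio is at most $q_V^{ns}$ outside $K_n$ (Proposition~\ref{LyapunovFunctionsProperties}). Controlling the joint quantity $\int V(f^nx)^sV(f^ny)^s$ needs a further Cauchy--Schwarz, giving the factor $(P^nV^{2s}(x)P^nV^{2s}(y))^{1/2}$. So far from a compact set in $Y$, at least one coordinate gives geometric decay of order $q_V^{ns/2}$, while the other factor is bounded by $C_V^{s/2}$. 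For the distance factor there are two regimes. First, when $d(x,y)$ is small and $(x,y)$ lies near a compact set $K\times K$, mostly contraction together with Proposition~\ref{Kingman-noncompact} gives $\phi_n\leq-\delta n$ on compacts. Using $Lf^n_\omega$ and the moment \eqref{eq:random-map-moment}, this yields via Lemma~\ref{lem:one-sided-small-powers} that $\int (L_rf^n_\omega(x))^{2\alpha}<1$ for small $\alpha$ and $r$, uniformly on $K$. Second, when $d(x,y)$ is not small, the ratio $d(f^nx,f^ny)^{2\alpha}/d(x,y)^{2\alpha}$ is bounded by $d(x,y)^{-2\alpha}$, which is controlled. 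There we need the average of $d(f^nx,f^ny)^{2\alpha}$ to become small, and stationary proximality enters here. Every cluster point of Cesàro averages of $(P^{[2]})^{*k}\delta_{(x,y)}$ is tight (the function $V(x)+V(y)$ is a Lyapunov function for $P^{[2]}$) and supported on $\mathcal D$. Hence for $(x,y)$ in a compact set of $Y$ the Cesàro averages of $\int d(f^kx,f^ky)^{\alpha}$ tend to $0$ uniformly, by compactness and Feller continuity. Combining this with the local contraction near the diagonal, which prevents re-separation, gives smallness at some time.

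These three local estimates must then be glued into a uniform bound. We cover $Y\setminus\mathcal D$ by three kinds of pieces. The first is the region outside a large compact set of $Y$, handled by the weight factor, where the $\alpha/\beta<\varepsilon_0$ condition ensures the distance factor, bounded by $\int\Lip(f^n)^{2\alpha}$, cannot beat the $V$-decay. The second is the near-diagonal part of the compact set, handled by contraction. The third is the off-diagonal part of the compact set, handled by proximality. On each piece we have $\sup\log R_{n_i}<0$ at some time $n_i$. We then apply the argument of Lemma~\ref{partition} to $\Phi_n=\log\sup$-type submultiplicative quantities, or more precisely reproduce its induction with $M_n=\sup_Y R_n$, using $R_{n}(x,y)\leq R_{n_i}(x,y)\sup R_{n-n_i}$. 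This gives $\sup R_n<1$ for large $n$. In effect this is the corrector philosophy of Proposition~\ref{prop:corrector}, with $\log V$ as corrector.

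The spectral gap then follows routinely. Contraction gives $[P^n\varphi]_{\alpha,\beta}\leq C\rho^n[\varphi]_{\alpha,\beta}$. Uniqueness of $\nu$ follows since two stationary measures couple via the contraction, tested against functions in $\Hol_{\alpha,\beta}$, which are dense enough. Finally $|P^n\varphi(x_0)-\int\varphi\,d\nu|\leq\int\!\int|P^n\varphi(x_0)-P^n\varphi(y)|\,d\nu(y)$, which is at most $C\rho^n[\varphi]_{\alpha,\beta}\int\Delta_{\alpha,\beta}(x_0,y)\,d\nu$. This is finite because $\int V^\beta d\nu\leq C_V$ (Proposition~\ref{prop:tightness}). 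The main obstacle is the off-diagonal compact regime. There proximality only gives Cesàro-averaged smallness rather than a fixed-time bound, so converting it to a bound $R_{n_i}<1$ at a single time, uniformly on a compact set, requires care. I would first try to use the Cesàro average directly, replacing $R_n$ by $\frac1N\sum_{k<N}R_{n+k}$-type quantities.
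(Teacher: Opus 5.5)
Your multiplicative gluing is sound. The bound $R_n(x,y)\le R_{n_i}(x,y)\sup R_{n-n_i}$, together with the induction of Lemma~\ref{partition}, gives $\sup R_n\le C\rho^n$ once you have a finite cover with piece-dependent times $n_i$ and $\sup R_{n_i}<1$ on each piece. Your handling of the region outside $K\times K$ and of the spectral gap is also fine.

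The genuine gap is the near-diagonal compact piece. There you split off the weight by Cauchy--Schwarz. Near $(z,z)$ the weight factor is $\bigl(\int (V(f^n_\omega z)/V(z))^{4\beta}\,d\Pp\bigr)^{1/2}$, which is $\geq1$ when $V(z)=\min V$ and in general strictly larger. So you need $\int Lf^n_\omega(z)^{2\alpha}\,d\Pp$ to lie quantitatively below its reciprocal, not merely below $1$. The statement requires this for every $(\alpha,\beta)$ with $\alpha/\beta<\varepsilon_0$, that is, with $\alpha\to0$ at fixed $\beta$. For fixed $n$, dominated convergence gives $\int Lf^n_\omega(z)^{2\alpha}\,d\Pp\to\Pp(Lf^n_\omega(z)>0)$, which is typically $1$. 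Hence $n$ must grow like $\beta/\alpha$, and Lemma~\ref{lem:one-sided-small-powers} gives no rate. What you would need is exponential decay in $n$ of a fixed small moment. But orbits leave $K$, and Proposition~\ref{Kingman-noncompact} only controls the averaged logarithm from compact starting sets. Nothing in your plan controls the moments along excursions to infinity. Using $\log V$ as a corrector only in the gluing, while bounding the weight as an adversary inside the piece estimates, does not supply this.

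The paper uses the weight as a corrector inside the estimate, for one fixed ratio, and then interpolates; this is Proposition~\ref{prop:two-cocycle-estimates} on $X^2$. Set $A_n=\widehat Lf^n_\omega(x,y)$ and $B_n=V(f^n_\omega x)V(f^n_\omega y)/(V(x)V(y))$.
\begin{enumerate}
\item Stationary proximality puts every $P^{[2]}$-stationary measure on $\mathcal D$, where $\int\log A_n\,d\Pp$ reduces to $\phi_n$. Since $\lambda(f)<0$, Proposition~\ref{Kingman-noncompact} on $X^2$, with Lyapunov function $V(x)+V(y)$, makes $\int\log A_n\,d\Pp$ asymptotically negative on compacts of $X^2$, near and off the diagonal at once.
\item Proposition~\ref{prop:corrector} then gives $\limsup_n\frac1n\sup_{X^2}\int\log(A_nB_n^c)\,d\Pp<0$ for a fixed large $c$.
\item Lemma~\ref{lem:one-sided-small-powers}, upper semicontinuity and the contraction at infinity give $a_N<1$ for $a_n\eqdef\sup_{X^2}\int(A_nB_n^c)^s\,d\Pp$ and one fixed $s$. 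Submultiplicativity then gives $a_n\le C_0\rho_0^n$.
\item Finally, $\int A_n^\alpha B_n^\beta\,d\Pp=\int(A_nB_n^c)^\alpha B_n^{\beta-c\alpha}\,d\Pp\le a_n^{\alpha/s}M^{\beta-c\alpha}$ covers every ratio $\alpha/\beta\le1/c$. The leftover weight only needs to be bounded, and one accepts a constant $C$ rather than a single-time bound below $1$.
\end{enumerate}

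Your scheme can be repaired the same way. Run your piecewise gluing for the single pair $(s,cs)$, then interpolate. Near $(z,z)$, Lemma~\ref{lem:one-sided-small-powers} now applies, because the relevant mean is at most $\phi_k(z)+2c\log C_V<0$ for large $k$.

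Conversely, the off-diagonal piece you flag as the main obstacle is not one in your framework. Times may depend on the piece, and $R_k$ is continuous off $\mathcal D$. So it suffices to find, for each off-diagonal $(x,y)\in K\times K$, one $k$ with $R_k(x,y)<1$. Such a $k$ exists: the Ces\`aro averages of $\int\Delta_{\alpha,\beta}(f^k_\omega x,f^k_\omega y)\,d\Pp$ tend to $0$. Their limit points are diagonal stationary measures, and $\Delta_{\alpha,\beta}\le(V(x)+V(y))^{2\beta}$ is uniformly integrable for $2\beta<1$. No re-separation argument is needed.
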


As a consequence of the previous theorem, we get the following result.

\begin{corollary}\label{prop:synchronization-equivalences}
Under the hypotheses of Theorem~\ref{thm:quasicompact}, the following are equivalent:
\begin{enumerate}[itemsep=0.1cm]
\item[\rm(i)] $f$ is stationarily proximal;
\item[\rm(ii)] $f$ is \emph{globally contracting on average}: for some $\alpha,\beta>0$, $C>0$, and $\rho\in(0,1)$,
\begin{equation}\label{eq:equivalent-global-contraction}
 \qquad \int\Delta_{\alpha,\beta}(f_\omega^n(x),f_\omega^n(y))\,d\Pp
 \leq C\rho^n\Delta_{\alpha,\beta}(x,y)
 \quad \text{for every \(x,y\in X\) and \(n\geq0\);}
\end{equation}
\item[\rm(iii)] $f$ is \emph{synchronizing}: for every $x,y\in X$, $d(f_\omega^n(x),f_\omega^n(y))\to 0$ for $\Pp$-a.e.~$\omega$;
\item[\rm(iv)] $f$ is \emph{synchronizing in mean}: for every $x,y\in X$, $\int d(f_\omega^n(x),f_\omega^n(y))\,d\Pp\to 0$;
\item[\rm(v)] $f^{[2]}$ is uniquely ergodic (i.e., $P^{[2]}$ has a unique stationary probability).
\item[\rm(vi)] \(f\) is \emph{proximal}: for every \(x,y\in X\),
there are a compact set \(K\subset X\) and a sequence
\((g_n)_{n\geq1}\) in the semigroup generated by the support of the law
of the fiber maps such~that
$$ g_n(x),g_n(y)\in K \quad  \text{and} \quad  d(g_n(x),g_n(y))\to 0.$$
\end{enumerate}
When these six properties hold, \eqref{eq:equivalent-global-contraction} holds for every sufficiently small $\alpha,\beta>0$ with $\alpha/\beta$ sufficiently small, and $P$ has a spectral gap on $\Hol_{\alpha,\beta}(X)$.
\end{corollary}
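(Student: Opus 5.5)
We prove the equivalences by a cycle of implications, taking Theorem~\ref{thm:product-weighted-global-contraction} as the main input. The first step is (i)$\Rightarrow$(ii): this is exactly the content of Theorem~\ref{thm:product-weighted-global-contraction}. It also gives the final assertion that \eqref{eq:equivalent-global-contraction} and the spectral gap hold for all small $\alpha,\beta$ with $\alpha/\beta$ small. It therefore suffices to close the loop.

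For (ii)$\Rightarrow$(iii) and (ii)$\Rightarrow$(iv), fix $x,y$. Since $V\geq1$, we have $d^\alpha\leq\Delta_{\alpha,\beta}$, so $\int d(f^n_\omega x,f^n_\omega y)^\alpha\,d\Pp\leq C\rho^n\Delta_{\alpha,\beta}(x,y)$. Summing over $n$ and applying Borel--Cantelli (or Markov's inequality) gives $d(f^n_\omega x,f^n_\omega y)\to0$ almost surely. Because $d\leq1$, bounded convergence then gives (iv). The implication (iii)$\Rightarrow$(iv) also follows directly by bounded convergence. For (iv)$\Rightarrow$(i), let $\mu^{[2]}$ be $P^{[2]}$-stationary. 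Integrating $\int d(f^n_\omega x,f^n_\omega y)\,d\Pp$ against $\mu^{[2]}$ and using stationarity, we get
\[
\int d\,d\mu^{[2]}=\int\!\!\int d(f^n_\omega x,f^n_\omega y)\,d\Pp\,d\mu^{[2]}(x,y)\to0
\]
by dominated convergence, with $d\leq1$. Hence $\mu^{[2]}(\mathcal D)=1$.

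For (i)$\Leftrightarrow$(v), assume (i). By (i)$\Rightarrow$(ii) and the spectral gap, $P$ has a unique stationary measure $\nu$. A $P^{[2]}$-stationary measure supported on $\mathcal D$ is the diagonal image of a $P$-stationary measure, since on $\mathcal D$ the two-point motion is conjugate to $f$. It is therefore the diagonal lift of $\nu$, which gives uniqueness. Existence holds because $V(x)+V(y)$ is a Lyapunov function for $P^{[2]}$, so Proposition~\ref{prop:tightness} applies. Conversely, assume (v). The diagonal lift of any $\mu\in\cI(f)$, which is nonempty by Proposition~\ref{prop:tightness}, is $P^{[2]}$-stationary and supported on $\mathcal D$. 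By uniqueness, it is the only stationary measure, so (i) holds.

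It remains to handle (vi). For (iii)$\Rightarrow$(vi), fix $x,y$. By Proposition~\ref{prop:tightness}, the empirical measures of $f^n_\omega x$ are a.s.\ tight, so there is a compact $K$ with $f^n_\omega x\in K$ for infinitely many $n$ along a set of positive probability. Since $d(f^n_\omega x,f^n_\omega y)\to0$ and $X$ is locally compact, we can enlarge $K$ to a compact neighbourhood that also contains $f^n_\omega y$ for those $n$. Approximating $\omega$ by elements of the support then yields $g_n$ in the semigroup; this uses continuity of the fiber maps in the parameter, which we take from the Lipschitz/moment setting or handle by choosing typical $\omega$. The hardest step is (vi)$\Rightarrow$(i). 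We try to argue by contradiction. Take an ergodic $P^{[2]}$-stationary $\mu^{[2]}$ not supported on $\mathcal D$. Mostly contracting together with Theorem~\ref{thm:quasicompact} implies that $\mu^{[2]}$-typical pairs are locally contracted, so $\int d\,d\mu^{[2]}$ is attained on a "minimal distance" structure. Proximality at one pair $(x,y)$ in the support then brings the pair, with positive probability, into a small neighbourhood of the diagonal inside a compact $K$. Uniform local contraction on $K$ (from Proposition~\ref{Kingman-noncompact}) then forces synchronization with positive probability. By stationarity and ergodicity, $\mu^{[2]}$ gives positive, hence full, mass to pairs attracted to $\mathcal D$, which contradicts $\mu^{[2]}(\mathcal D)<1$. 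Making the "positive probability local contraction near the diagonal on compacts" step precise, uniformly over $K$, is the main obstacle. We would derive it from the Kingman estimate applied to $\phi_n$ together with a Borel--Cantelli argument on shrinking balls.
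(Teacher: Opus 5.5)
Your arguments for (i)$\Rightarrow$(ii)$\Rightarrow$(iii)$\Rightarrow$(iv)$\Rightarrow$(i), for (i)$\Leftrightarrow$(v), and for (iii)$\Rightarrow$(vi) are sound and close to the paper's. In (i)$\Rightarrow$(v), quoting uniqueness of $\nu$ from Theorem~\ref{thm:product-weighted-global-contraction} is a valid substitute for the paper's Ces\`aro coupling of $\mu\times\mu'$.

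The gap is in (vi)$\Rightarrow$(i), and it is the step you flag yourself. The whole implication rests on a quantitative statement: pairs that lie in a compact set and are close to each other synchronize with probability bounded below. You never prove this.

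As stated, your proposed derivation does not supply it. Proposition~\ref{Kingman-noncompact} only gives $\sup_K\int\log Lf^n_\omega\,d\Pp\leq-\delta n$. This concerns the infinitesimal constants (the limit $r\to0$), on a compact set, and only in mean. Turning it into contraction of a ball of fixed radius for all future times needs two further things:
\begin{itemize}
\item scale-$r$ Lipschitz bounds;
\item control of the orbit after it leaves $K$, where no negativity holds.
\end{itemize}
A Borel--Cantelli argument on shrinking balls handles neither unless it is combined with the Lyapunov-function corrector and a stopping argument, and that combination is the missing proof. Theorem~\ref{thm:quasicompact} (quasi-compactness) does not by itself make $\mu^{[2]}$-typical pairs locally contracted. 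The ``minimal distance structure'' sentence does not correspond to a usable statement.

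The paper imports this ingredient. Tightness plus mostly contracting gives the local contraction property \cite[Thm.~10.2.1]{BMbook}, which passes to $f^{[2]}$. By \cite[Lem.~4.7]{Mal17}, distinct ergodic $P^{[2]}$-stationary measures then have disjoint supports. Applying proximality twice produces a point $(z,z)$ in both supports, which proves (vi)$\Rightarrow$(v).

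Your route can still be completed with estimates already in the paper.
\begin{itemize}
\item Lemma~\ref{cor:one-block}, which does not use stationary proximality, gives $N$, a compact $K$ and $r>0$ with $\int\Delta_{\alpha,\beta}(f^N_\omega x,f^N_\omega y)\,d\Pp\leq\frac12\Delta_{\alpha,\beta}(x,y)$ on $G=\{(x,y):(x,y)\notin K\times K\text{ or }d(x,y)\leq r\}$.
\item Stop $M_k=\Delta_{\alpha,\beta}(f^{kN}_\omega x,f^{kN}_\omega y)$ at the exit time $\tau$ from $G$. This gives a nonnegative supermartingale with $M_\tau\geq r^\alpha$ on $\{\tau<\infty\}$, since $V\geq1$. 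The maximal inequality then yields $\Pp(\tau<\infty)\leq\Delta_{\alpha,\beta}(x,y)/r^\alpha$.
\item One also has $\int M_k\1_{\{\tau>k\}}\,d\Pp\leq2^{-k}M_0$. Together with the finite ratios $\sup_{u\neq v}\int\Delta_{\alpha,\beta}(f^j_\omega u,f^j_\omega v)\,d\Pp/\Delta_{\alpha,\beta}(u,v)$ for $j<N$, this forces synchronization on $\{\tau=\infty\}$.
\item Hence the synchronization probability is at least $\frac12$ whenever $\Delta_{\alpha,\beta}(x,y)\leq r^\alpha/2$, with no separate uniformity issue.
\end{itemize}
You would still need two further justifications. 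First, proximality at a point of $\supp\mu^{[2]}$ must move a set of positive $\mu^{[2]}$-measure into this region with positive probability. This needs continuity of the fiber maps for the topology defining the support, which the paper also uses implicitly. Second, ``positive, hence full'' must be made precise. The function $h(x,y)=\Pp\bigl(d(f^n_\omega x,f^n_\omega y)\to0\bigr)$ is $P^{[2]}$-harmonic, hence $\mu^{[2]}$-a.e.\ constant by ergodicity. Under the stationary two-point chain $(Z_n)$, $h(Z_n)$ converges to the indicator of synchronization, so that constant equals $1$.
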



\begin{proof} ~\\[-0.6cm]
\begin{enumerate}[label=$\bullet$, leftmargin=0.5cm]
\item \textup{(i)$\Rightarrow$(ii).} This is Theorem~\ref{thm:product-weighted-global-contraction}.

\item \textup{(ii)$\Rightarrow$(iii).} For fixed $x,y\in X$, condition (ii) gives
$\int\sum_{n\geq0}\Delta_{\alpha,\beta}(f_\omega^n(x),f_\omega^n(y))\,d\Pp<\infty$.
Thus the summands tend to zero almost surely. Since
$\Delta_{\alpha,\beta}(u,v)\geq d(u,v)^\alpha$, this gives~(iii).

\item \textup{(iii)$\Rightarrow$(iv).} This follows from dominated convergence,
since $d\leq1$.

\item \textup{(iv)$\Rightarrow$(i).} Let $\pi$ be any $P^{[2]}$-stationary
probability. For every $n\geq1$, stationarity gives
\[
 \int d(x,y)\,d\pi
 =\int\!\!\int d(f_\omega^n(x),f_\omega^n(y))\,d\Pp\,d\pi.
\]
The right-hand side tends to zero by (iv) and dominated convergence.
Hence $\pi$ is supported on the diagonal.

\item \textup{(i)$\Rightarrow$(v).} Let $\mu$ and $\mu'$ be $P$-stationary
probabilities. The Ces\`aro averages
\[
 \pi_N=\frac1N\sum_{k=0}^{N-1}(P^{[2]})^{*k}(\mu\times\mu')
\]
all have marginals $\mu$ and $\mu'$, hence are tight. Every weak$^*$
limit point is $P^{[2]}$-stationary, because $P^{[2]}$ is Feller.
By (i) it is diagonal, so $\mu=\mu'$. Thus $P$ has a unique
stationary probability $\nu$. Every $P^{[2]}$-stationary probability
is diagonal by (i) and has marginal $\nu$; it must therefore equal
$(x\mapsto(x,x))_*\nu$.

\item \textup{(v)$\Rightarrow$(i).} Proposition~\ref{prop:tightness}
provides a $P$-stationary probability $\nu$. Its diagonal image
$(x\mapsto(x,x))_*\nu$ is $P^{[2]}$-stationary and, by (v), is the
only such probability. 
\item {\rm(iii)}\(\Rightarrow\){\rm(vi)}.
The function \(S(x,y)\eqdef V(x)+V(y)\) is a Lyapunov function
for \(P^{[2]}\). Hence, by Proposition~\ref{prop:tightness},
for every \(x,y\in X\), the empirical measures of the Markov chain 
\(((f^{[2]})_\omega^n(x,y))_{n\geq0}\)
are tight for \(\Pp\)-a.e.~\(\omega\).
Choose \(\omega\) for which this property and {\rm(iii)} hold.
Tightness gives a compact set \(L\subset X\times X\) and \(n_j\to\infty\) such that $        (f^{[2]})_\omega^{n_j}(x,y)\in L$
        for $j\geq1$. 
Set \(K\eqdef\mathrm{proj}_1(L)\cup\mathrm{proj}_2(L)\).
Then \(K\) is compact,
\(f_\omega^{n_j}(x),f_\omega^{n_j}(y)\in K\) and, by~{\rm(iii)},
$d(f_\omega^{n_j}(x),f_\omega^{n_j}(y))
        \to 0$. Since \(f_\omega^{n_j}\in G\), where \(G\) is the semigroup
in {\rm(vi)}, this proves {\rm(vi)}.

\item {\rm(vi)}\(\Rightarrow\){\rm(v)}.
By Proposition~\ref{prop:tightness}, \(f\) is tight in the sense
of \cite[Thm.~10.2.1]{BMbook}. Since \(f\) is mostly contracting,
the argument of that theorem gives the \emph{local contraction
property}: for every \(x\in X\) and \(\Pp\)-a.e.~\(\omega\),
there is a neighborhood \(B\) of \(x\) such that $    \mathrm{diam}\,f_\omega^n(B)\to 0$.
As observed in \cite[proof of Prop.~3.5.1]{BMbook},
\(f^{[2]}\) also has the local contraction property.
Suppose that \(\pi_1,\pi_2\) are distinct ergodic
\(P^{[2]}\)-stationary probabilities and put
\(F_i\eqdef\supp\pi_i\).
Then \(F_1\cap F_2=\emptyset\).
Indeed, if \(u\in F_1\cap F_2\), local contraction gives a
contractible ball \(B\) containing \(u\), and
\(\pi_1(B)\pi_2(B)>0\).
However, the argument of \cite[Lem.~4.7]{Mal17}, using bounded
Lipschitz test functions, shows that a contractible ball can
have positive mass for at most one ergodic stationary measure.
Thus \(\pi_1=\pi_2\), a contradiction.

Now choose \((x_i,y_i)\in F_i\), \(i=1,2\).
For each \(i\), {\rm(vi)} gives a compact set \(K_i\subset X\)
and a sequence \((g_{i,n})_{n\geq1}\subset G\) such that
$ g_{i,n}(x_i),g_{i,n}(y_i)\in K_i$, $        d(g_{i,n}(x_i),g_{i,n}(y_i))\to 0$. Passing to a subsequence gives \(z_i\in K_i\) such that
\(g_{i,n}(x_i)\to z_i\) and \(g_{i,n}(y_i)\to z_i\).
Since \(F_i\) is closed and invariant under the diagonal
action of \(G\), \((z_i,z_i)\in F_i\).
Applying {\rm(vi)} once more to \(z_1,z_2\), and using the
compact set in the definition of proximality, we obtain a
point \(z\in X\) such that $        (z,z)\in F_1\cap F_2$, a contradiction.
Thus \(P^{[2]}\) has at most one ergodic stationary probability.
Since \(S\) is a Lyapunov function for \(P^{[2]}\),
Proposition~\ref{prop:tightness} gives existence of stationary
probabilities. By ergodic decomposition, \(f^{[2]}\) is
therefore uniquely ergodic.
This proves {\rm(vi)}\(\Rightarrow\){\rm(v)}. \qedhere
\end{enumerate}
\end{proof}

\subsection{Contraction properties}

We establish the estimates on $\Delta_{\alpha,\beta}$ needed in the proofs
of Theorems~\ref{thm:quasicompact}
and~\ref{thm:product-weighted-global-contraction}.

\begin{proposition}\label{prop:contractions}
Under the assumptions of Theorem~\ref{thm:quasicompact}, there is
$\varepsilon_0>0$ such that, whenever $0<\beta<\varepsilon_0$ and
$0<\alpha/\beta<\varepsilon_0$, there are $C>0$ and $\rho\in(0,1)$
with the following properties:
\begin{enumerate}[label=(\roman*)]
\item \emph{Local contraction.} For every $n\geq0$ and $x\in X$,
\begin{equation}\label{eq:local-weighted-contraction}
\limsup_{\substack{y,z\to x\\y\neq z}}
\int\frac{\Delta_{\alpha,\beta}(f_\omega^n(y),f_\omega^n(z))}
{\Delta_{\alpha,\beta}(y,z)}\,d\Pp\leq C\rho^n.
\end{equation}
\item \emph{Global contraction under stationary proximality.} If $f$ is
stationarily proximal, then, for every $n\geq0$,
\begin{equation}\label{eq:global-weighted-contraction}
\sup_{x\neq y}
\int\frac{\Delta_{\alpha,\beta}(f_\omega^n(x),f_\omega^n(y))}
{\Delta_{\alpha,\beta}(x,y)}\,d\Pp\leq C\rho^n.
\end{equation}
\item \emph{Contraction at infinity.} For each $n\geq0$ there is a compact
set $K_n\subset X$ such that
\begin{equation}\label{eq:infinity-weighted-contraction}
	\sup_{\substack{(x,y)\notin K_n\times K_n\\x\neq y}}
	\int\frac{\Delta_{\alpha,\beta}(f_\omega^n(x),f_\omega^n(y))}
	{\Delta_{\alpha,\beta}(x,y)}\,d\Pp\leq C\rho^n.
\end{equation}
\end{enumerate}
\end{proposition}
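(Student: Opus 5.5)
The plan is to bound the ratio $\Delta_{\alpha,\beta}(f_\omega^n(x),f_\omega^n(y))/\Delta_{\alpha,\beta}(x,y)$ by a product of two factors and decouple them with Hölder's inequality. Write
\[
\frac{\Delta_{\alpha,\beta}(f^n_\omega x,f^n_\omega y)}{\Delta_{\alpha,\beta}(x,y)}
=\Bigl(\frac{d(f^n_\omega x,f^n_\omega y)}{d(x,y)}\Bigr)^{\alpha}
\Bigl(\frac{V(f^n_\omega x)}{V(x)}\Bigr)^{\beta}\Bigl(\frac{V(f^n_\omega y)}{V(y)}\Bigr)^{\beta}.
\]
The distance factor will be controlled by the Lipschitz data, through the subadditive sequence $\phi_n$ in (i), its two-point analogue in (ii), or simply $\Lip(f^n_\omega)$ in (iii). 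The $V$-factors will be controlled by the drift through Proposition~\ref{LyapunovFunctionsProperties}. The essential mechanism is a ``small exponent'' Taylor expansion, as in Lemma~\ref{lem:one-sided-small-powers}. For $s$ small, $\int e^{s Z}\,d\Pp\approx 1+s\int Z\,d\Pp$, so a negative drift in the logarithm survives. The product must be combined so that the dominant term is $\beta\,(P^n\log V-\log V)$ plus $\alpha$ times the log-Lipschitz term. This is why we require $\alpha/\beta$ small: the contraction coming from $V$ at infinity must beat the possible expansion of $d$ there. We also require $\beta$ small so that $(V(f^n x)/V(x))^{\beta}$ is integrable with bounded exponential moments, using $P^nV\le C_VV$ and Jensen.

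\textbf{Step 1 (contraction at infinity, (iii)).} Fix $n$ and let $(x,y)\notin K_n\times K_n$, say $x\notin K_n$. We bound $d(f^n x,f^n y)/d(x,y)\le \Lip(f^n_\omega)$. By Hölder with exponents $(p,p',p'')$, the three factors separate. The factor $\int \Lip(f^n_\omega)^{\alpha p}\,d\Pp\le (\int\Lip(f)^{\alpha p})^n$ is at most $e^{O(\alpha)n}$ by \eqref{eq:random-map-moment}, once $\alpha p\le \eta_0$. The $y$-factor is at most $C_V^{\beta p''}$ provided $\beta p''\le1$. The $x$-factor is at most $(P^nV/V)^{\beta p'}(x)\le q_V^{n\beta p'}$ by item (3) of Proposition~\ref{LyapunovFunctionsProperties}, provided $\beta p'\le 1$. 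Taking $p$ large and using $\alpha/\beta$ small, the product is $\lesssim e^{n(O(\alpha)-c\beta)}\le\rho^n$ for an enlarged $K_n$. To get a rate uniform in $n$, I would instead run this at a fixed large time $m$ and then iterate using (i)/(ii) and the Markov property. In practice, (iii) for all $n$ should follow from the uniform version below.

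\textbf{Step 2 (uniform negativity via the corrector).} Consider the $P$-subadditive sequence
\[
\Phi_n(x)=\log\int\bigl(Lf^n_\omega(x)\bigr)^{\alpha}\Bigl(\tfrac{V(f^n_\omega x)}{V(x)}\Bigr)^{2\beta}d\Pp .
\]
Subadditivity holds by the cocycle property and the Markov property: one takes the $\sup$ over a neighbourhood (i.e.\ $L_r$) and then uses $L(g\circ h)\le Lg\circ h\cdot Lh$. The aim is $\sup_X\Phi_n\le \log C+n\log\rho$. On compacts, I would use Proposition~\ref{Kingman-noncompact}: mostly contraction gives $\phi_n\le-\delta n$ on compacts. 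A small power then turns the log-average into an average of powers via the Taylor argument of Lemma~\ref{lem:one-sided-small-powers}, applied to $Z=\log Lf^{m}_\omega(x)+(2\beta/\alpha)\log(V(f^m x)/V(x))$ with the exponent $s=\alpha$. Off a compact, Step 1 provides negativity. Then Lemma~\ref{partition}, exactly as in Proposition~\ref{prop:corrector} and Corollary~\ref{Kingman-noncompact-uniform}, gives the uniform bound. This yields (i), since $\limsup_{y,z\to x}$ of the distance ratio is at most $Lf^n_\omega(x)$ (use reverse Fatou, Lemma~\ref{FatouReverse}, to exchange the limsup and the integral) and $V(y)V(z)\to V(x)^2$ by continuity.

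\textbf{Step 3 (global contraction, (ii)).} Here the same scheme is run on $X\times X$ minus the diagonal, with the two-point motion $f^{[2]}$ and the Lyapunov function $S(x,y)=V(x)+V(y)$. The quantity to control is $\Psi_n(x,y)=\log\int \Delta_{\alpha,\beta}(f^n x,f^n y)/\Delta_{\alpha,\beta}(x,y)\,d\Pp$, which is $P^{[2]}$-subadditive on $X\times X\setminus\mathcal D$. Near the diagonal, (i) applies uniformly on compacts via Step 2. At infinity, Step 1 applies. The main obstacle is the remaining region: compact sets of pairs at positive distance. There I would use stationary proximality. By Corollary~\ref{prop:synchronization-equivalences}'s argument (iv), for every stationary measure $\pi$ of $P^{[2]}$, $\int \log d(x,y)$ decreases to $-\infty$, hence $\int\Delta^{\,}_{\alpha,\beta}(f^n x,f^n y)\to0$ uniformly on compacts off the diagonal. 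I would obtain this from Proposition~\ref{Kingman-noncompact} for $P^{[2]}$, applied to $\log$ of the ratio; the delicate point is upper semicontinuity and the boundedness above near the diagonal. Since $d(x,y)$ is bounded below there, the ratio is eventually less than $1/2$ at some time $n_i$ on a finite cover. Lemma~\ref{partition}, applied to $\Psi_n$ with the cover (near diagonal) $\cup$ (infinity) $\cup$ (compact off-diagonal pieces), then gives $\sup\Psi_n\le n\log\rho+\log C$.
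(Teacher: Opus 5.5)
Your Step~1 is correct and is the paper's argument at infinity. The three-factor H\"older bound already gives $C_V^{\beta}\rho^n$ outside $K_n$ for \emph{every} $n$, with $\rho=\bigl(\int\Lip(f_\omega)^{\eta_0}\,d\Pp\bigr)^{\alpha/\eta_0}q_V^{\beta}<1$ once $\alpha/\beta$ is small. No enlargement of $K_n$ or iteration is needed. You also should not try to recover (iii) from a uniform two-point bound, since (iii) must hold without stationary proximality.

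The genuine gap is on compacts in Step~2. Writing $R_m=V(f^m_\omega x)/V(x)$, you apply Lemma~\ref{lem:one-sided-small-powers} to $Z=\log Lf^m_\omega(x)+(2\beta/\alpha)\log R_m$ ``with exponent $s=\alpha$''. That lemma only yields some unquantified threshold $s_0(Z)$, and nothing guarantees $\alpha<s_0(Z)$:
your $Z$ depends on $\beta/\alpha$, which is unbounded on the admissible cone, and on a time $m$ that may have to grow with $\beta/\alpha$ before $\int Z\,d\Pp<0$ on a compact (where $P^m\log V-\log V$ can be positive). So no $\varepsilon_0$ uniform over $\{\beta<\varepsilon_0,\ \alpha/\beta<\varepsilon_0\}$ comes out. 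A quantitative Taylor bound would also need control of the lower tail of $\log Lf^m_\omega$, on which nothing is assumed.

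The missing idea is to fix the corrector once and then interpolate. With $A_n=Lf^n_\omega$ and $B_n=R_n^2$, Proposition~\ref{prop:corrector} gives a single $c$ such that $\int\log(A_nB_n^c)\,d\Pp$ is uniformly negative on $X$; take $c$ large enough that Step~1 applies to the exponent pair $(s,cs)$. The small-exponent lemma on a compact, Step~1 off it, and upper semicontinuity then give one small $s$ with $a_N=\sup_X\int(A_NB_N^c)^s\,d\Pp<1$. Submultiplicativity $a_{n+m}\le a_na_m$ makes $a_n$ decay exponentially. Then, for any $\beta\ge c\alpha$ with $\alpha/s+\beta-c\alpha\le1$, H\"older applied to $A_n^\alpha B_n^\beta=(A_nB_n^c)^\alpha B_n^{\beta-c\alpha}$ gives $\int A_n^\alpha B_n^\beta\,d\Pp\le a_n^{\alpha/s}M^{\beta-c\alpha}$, and this covers the whole cone.

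Relatedly, your $\Phi_n=\log\int(\cdots)\,d\Pp$ is not $P$-subadditive, because Jensen goes the wrong way. What holds is $\sup_Xe^{\Phi_{n+m}}\le\sup_Xe^{\Phi_n}\sup_Xe^{\Phi_m}$, and that submultiplicativity is what is actually needed.

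For (ii), the step on separated pairs is not justified. Appealing to Corollary~\ref{prop:synchronization-equivalences} is circular: there, the passage from stationary proximality to synchronization goes through Theorem~\ref{thm:product-weighted-global-contraction}, that is, through this very proposition. Nor can Proposition~\ref{Kingman-noncompact} be used on $X^2\setminus\mathcal D$, which carries no $P^{[2]}$-stationary probability and on which $V(x)+V(y)$ is not proper.

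The paper instead stays on all of $X^2$ and uses $\widehat Lf^n_\omega(x,y)$, which equals $Lf^n_\omega(x)$ on the diagonal. This function is upper semicontinuous, submultiplicative and bounded by $\Lip(f^n_\omega)$. Every $P^{[2]}$-stationary probability lives on $\mathcal D$, where $\widehat L=L$, so mostly contraction gives the stationary criterion of Proposition~\ref{Kingman-noncompact} directly. The same corrector-plus-interpolation argument, now with $B_n^{[2]}=R_n(x)R_n(y)$, then treats nearby pairs, separated pairs and infinity at once, with no three-region decomposition.

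Your decomposition could be rescued once (i) is available. Tightness of Ces\`aro averages of the two-point chain gives, for each separated pair, a time at which $\int d(f^k_\omega x,f^k_\omega y)^{\alpha p}\,d\Pp$ is small; continuity and a multiplicative partition lemma then finish. That is not the argument you gave, however.
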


\subsubsection{The cocycles behind the contraction estimates}
The weighted quotients in Proposition~\ref{prop:contractions}
suggest two versions of the same probabilistic estimate. For the
local estimate, continuity of $V$ gives
\[
\limsup_{\substack{y,z\to x\\y\neq z}}
\frac{\Delta_{\alpha,\beta}(f_\omega^n(y),f_\omega^n(z))}
     {\Delta_{\alpha,\beta}(y,z)}
=A_n(\omega,x)^\alpha B_n(\omega,x)^\beta,
\]
where
\begin{equation} \label{eq:localpair}
A_n(\omega,x)=Lf_\omega^n(x),\qquad
B_n(\omega,x)
=\left(\frac{V(f_\omega^n(x))}{V(x)}\right)^2.
\end{equation}
For the global estimate, define for every Lipschitz map $g:X\to X$
\[
\widehat Lg(x,y)=
\begin{cases}
d(g(x),g(y))/d(x,y),&x\neq y,\\
Lg(x),&x=y.
\end{cases}
\]
On $X^2$ we use
\[
A_n^{[2]}(\omega;x,y)=\widehat Lf_\omega^n(x,y),\qquad
B_n^{[2]}(\omega;x,y)
=\frac{V(f_\omega^n(x))V(f_\omega^n(y))}{V(x)V(y)}.
\]
Then, for $x\neq y$,
\[
\frac{\Delta_{\alpha,\beta}(f_\omega^n(x),f_\omega^n(y))}
     {\Delta_{\alpha,\beta}(x,y)}
=A_n^{[2]}(\omega;x,y)^\alpha B_n^{[2]}(\omega;x,y)^\beta.
\]
The two pairs agree on the diagonal, but we will apply the general
proposition below on $X$ for local contraction and on $X^2$ for global contraction.
The drift of $V$ controls the moments of $B_n$ and its decay at
infinity; negativity of the averaged logarithms of $A_n$ supplies
the other ingredient. The latter follows from mostly contracting
on $X$, and on $X^2$ when stationary proximality is assumed.

\subsubsection{A general cocycle estimate}
We isolate the estimate suggested by the two weighted
quotients above. In the next proposition, $f$ may be any Bernoulli random map
on a locally compact space $X$, with Markov operator $P$: no Lyapunov
function or mostly contracting hypothesis is imposed. Here $A_n,B_n$
denote arbitrary cocycles, of which the two pairs above are examples.
We call \emph{submultiplicative cocycle} over $f$ a family
$A_n:\Omega\times X\to[0,\infty)$, $n\geq1$, with
$A_n(\cdot,x)$ depending only on $\omega_0,\ldots,\omega_{n-1}$,
such that
\begin{equation}\label{eq:submultiplicative-cocycle}
A_{n+m}(\omega,x)\leq
A_m(\omega,x)\,
A_n(\sigma^m\omega,f_\omega^m(x)),
\end{equation}
where $\sigma$ is the left shift on $\Omega$. 

Let $(A_n)_{n\geq1}$ and $(B_n)_{n\geq1}$ be two such cocycles. Suppose that for some
$r>0$, $L,M\geq1$, $q\in(0,1)$, and compact sets $K_n\subset X$,
\begin{equation}\label{eq:two-cocycle-assumptions}
\sup_{x\in X}\int A_1(\omega,x)^r\,d\Pp\leq L,\qquad
\sup_{n\geq1,\,x\in X}\int B_n(\omega,x)^r\,d\Pp\leq M,
\end{equation}
\begin{equation}\label{eq:two-cocycle-drift}
\sup_{x\in X\setminus K_n}
\int B_n(\omega,x)^r\,d\Pp\leq Mq^n
\qquad \text{for } n\geq1.
\end{equation}

\begin{proposition}\label{prop:two-cocycle-estimates}
There is $\varepsilon>0$ such that for every $0<\beta<\varepsilon$
and $0<\alpha/\beta<\varepsilon$, some $C>0$ and $\rho\in(0,1)$
satisfy
\begin{equation}\label{eq:two-cocycle-infinity}
\sup_{x\in X\setminus K_n}
\int A_n(\omega,x)^\alpha B_n(\omega,x)^\beta\,d\Pp
\leq C\rho^n\qquad \text{for } n\geq1.
\end{equation}
If, in addition, $A_n(\omega,\cdot)$ and $B_n(\omega,\cdot)$ are
upper semicontinuous for almost every~$\omega$, for every $n$, and
the sequence $\bigl(x\mapsto\int\log A_n(\omega,x)\,d\Pp\bigr)_{n\geq1}$
is asymptotically negative in the sense of Definition~\ref{assymp-neg},
then, after reducing $\varepsilon$ if necessary, the same conclusion
holds uniformly on $X$:
\begin{equation}\label{eq:two-cocycle-contraction}
\sup_{x\in X}
\int A_n(\omega,x)^\alpha B_n(\omega,x)^\beta\,d\Pp
\leq C\rho^n\qquad\text{for } n\geq1.
\end{equation}
\end{proposition}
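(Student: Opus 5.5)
The first estimate \eqref{eq:two-cocycle-infinity} will follow from Hölder's inequality alone. For $x\notin K_n$, apply Hölder with exponents $p=r/\beta$ for the $B$ factor and its conjugate $p'=p/(p-1)$ for the $A$ factor:
\[
\int A_n^\alpha B_n^\beta\,d\Pp\leq\Bigl(\int A_n^{\alpha p'}d\Pp\Bigr)^{1/p'}\Bigl(\int B_n^{r}d\Pp\Bigr)^{\beta/r}.
\]
The second factor is at most $(Mq^n)^{\beta/r}$ by \eqref{eq:two-cocycle-drift}.

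For the first factor, submultiplicativity \eqref{eq:submultiplicative-cocycle} together with the Markov property (independence of the blocks $\omega_k$) gives, for any $s\leq r$,
\[
\sup_x\int A_n^s\,d\Pp\leq\Bigl(\sup_x\int A_1^s\,d\Pp\Bigr)^n\leq L^{ns/r},
\]
where the last step uses Jensen. With $s=\alpha p'$ this yields a bound of order $L^{n\alpha/r}$ times a constant once $\beta<r/2$, since then $p'\leq 2$. Combining the two factors gives
\[
\int A_n^\alpha B_n^\beta\,d\Pp\leq C\exp\bigl(n(\alpha\log L/r-\beta\log(1/q)/r)\bigr).
\]
The exponent is negative as soon as $\alpha/\beta<\log(1/q)/\log L$, which fixes $\varepsilon$ and $\rho$.

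For the uniform estimate \eqref{eq:two-cocycle-contraction}, I would combine this with the compact part by a subadditive argument. Set $\Phi_n(x)=\log\int A_n^\alpha B_n^\beta\,d\Pp$. Hölder together with the cocycle relations for $A_n$ and $B_n$ shows $\Phi_{n+m}\leq\Phi_m+\sup\Phi_n$, which is not the $P$-subadditivity needed directly. Instead I will use the covering idea of Lemma~\ref{partition} adapted multiplicatively. Let $F_n(x)=\int A_n^\alpha B_n^\beta\,d\Pp$ and $M_n=\sup_xF_n$. Then
\[
F_{n+k}(x)\leq\int A_k^\alpha B_k^\beta(\omega,x)\,F_n(f^k_\omega x)\,d\Pp\leq F_k(x)M_n.
\]
So it suffices to find finitely many sets covering $X$ and times $n_i$ with $F_{n_i}<1$ on each set; the induction of Lemma~\ref{partition} then gives $M_n\leq C\rho^n$. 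Outside $K_{n_1}$ this holds by the first part for large $n_1$.

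The remaining, and main, step is the compact set $K=K_{n_1}$: we need $F_n<1$ on $K$ for some $n$. Here I would use asymptotic negativity. There are $\delta>0$ and $N$ with $\int\log A_N\,d\Pp\leq-\delta N$ on $K$, and then Lemma~\ref{lem:one-sided-small-powers}-style Taylor expansion should give $\int A_N^{\alpha'}\,d\Pp<e^{-\alpha'\delta N/2}$ for small $\alpha'$. The expansion needs uniformity in $x\in K$. I plan to get this from the moment bound $\int A_N^{r}\leq L^N$ controlling the second-order term $\int(\log A_N)^2A_N^{s}$ uniformly; the negative part can be truncated as in that lemma, uniformly via upper semicontinuity and compactness, since truncation $\max(\log A_N,-T)$ has integral upper semicontinuous in $x$ by Corollary~\ref{cor:semicontinuity}.

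Then, by Hölder again, with $B_N^\beta$ controlled by $M^{\beta/r}$:
\[
F_N\leq\Bigl(\int A_N^{\alpha p'}\Bigr)^{1/p'}M^{\beta/r}.
\]
Choosing $\beta$ small makes $M^{\beta/r}$ close to $1$, but $\alpha p'$ must also stay small, and the gain $e^{-c\alpha N}$ must beat $M^{\beta/r}$. This requires $\alpha/\beta$ \emph{large}, which conflicts with $\alpha/\beta<\varepsilon$. This is the main obstacle.

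To resolve it, I would choose $N$ large first: $M$ is uniform in $n$, so $M^{\beta/r}$ does not grow with $N$, while the gain $e^{-c\alpha N}$ does. Taking $N\gtrsim\beta\log M/(c\alpha r)$ works for any fixed ratio $\alpha/\beta$. This requires the exponent bound $\int A_N^{s}\leq e^{-csN}$ to hold for $s$ in a fixed range independent of $N$. To obtain that, I would apply the small-power lemma to $\log A_{N_0}$ at a fixed time $N_0$ and then iterate submultiplicatively along blocks, restarting from points that may leave $K$. Blocks that start outside $K$ are handled by the drift estimate, and combining the two regimes is again the Lemma~\ref{partition} covering argument.
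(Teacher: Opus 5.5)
Your first estimate is correct and is essentially the paper's argument; the paper normalizes $r=1$ and uses a three-factor H\"older inequality with $\alpha+\beta\leq1$. Your multiplicative covering reduction via $F_{n+k}\leq F_kM_n$ is also valid.

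The gap is in the compact part. Everything there rests on the bound $\sup_{x\in K}\int A_N^s\,d\Pp\leq e^{-csN}$ for $s$ in an $N$-independent range, and your sketch for it is circular. At a fixed time $N_0$, Lemma~\ref{lem:one-sided-small-powers} gives $\int A_{N_0}^s\,d\Pp\leq\theta<1$ only for starting points in $K$. In a block decomposition, a block starting outside $K$ has, for $A$ alone, only the bound $L^{N_0s/r}$, which gives no decay. The drift hypothesis controls $B$, not $A$. To use the drift on such blocks you must carry a factor $B^{s'}$ with $s'$ large compared with $s$. Then, on blocks starting in $K$, you again need the $A$-gain at a fixed time to beat the $B$-loss with $s/s'$ small, which is exactly the obstacle you identified. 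Iterating over blocks makes it worse: the $B$-losses are paid once per block, whereas your own resolution relied on the single uniform constant $M$ over the whole time $N$.

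The missing idea is to couple $A$ and $B$ \emph{before} passing to small powers, at the level of averaged logarithms, where no H\"older splitting is needed.
\begin{enumerate}
\item Normalize $r=1$ and set $\phi_n=\int\log A_n\,d\Pp$ and $\psi_n=\int\log B_n\,d\Pp$. Both are $P$-subadditive. By Jensen, $\psi_n\leq\log M$ on $X$, and $\psi_{n_0}\leq\log(Mq^{n_0})<0$ off $K_{n_0}$ once $Mq^{n_0}<1$.
\item Proposition~\ref{prop:corrector} then gives $c$ with $\limsup_n\frac1n\sup_X(\phi_n+c\psi_n)<0$. On $K$ the $B$-term costs only $c\log M$, uniformly in $n$, against $-\delta n$. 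So ``taking $N$ large'' genuinely works here.
\item Apply Lemma~\ref{lem:one-sided-small-powers} to $C_N=A_NB_N^c$ at one fixed $N$. On $K_N$ use upper semicontinuity via Corollary~\ref{cor:semicontinuity} and compactness. Off $K_N$ use your first estimate with exponents $(s,cs)$, which requires $1/c<\varepsilon$. This yields $\sup_X\int C_N^s\,d\Pp<1$.
\item Submultiplicativity then gives $\sup_X\int C_n^s\,d\Pp\leq C_0\rho_0^n$. The small power $s$ depends on $N$, but this is harmless: it shrinks the exponents of $A$ and $B$ in $C_N$ proportionally, keeping the ratio $1/c$.
\item Finally, for $\alpha/\beta\leq1/c$, write $A_n^\alpha B_n^\beta=C_n^\alpha B_n^{\beta-c\alpha}$. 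Apply H\"older, using $\sup_{n,x}\int B_n\,d\Pp\leq M$ on the leftover factor. This step is what makes the threshold on $\beta$ independent of the ratio $\alpha/\beta$, as the statement requires.
\end{enumerate}
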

\begin{proof}
Replacing $A_n,B_n$ by $A_n^r,B_n^r$ and rescaling the exponents,
we may assume $r=1$. Submultiplicativity and independence give
\begin{equation}\label{eq:two-cocycle-A-moment}
\sup_{x\in X}\int A_n(\omega,x)\,d\Pp\leq L^n.
\end{equation}
When $\alpha+\beta\leq1$, H\"older's inequality yields, outside $K_n$,
\[
\int A_n(\omega,x)^\alpha B_n(\omega,x)^\beta\,d\Pp
\leq L^{\alpha n}(Mq^n)^\beta
=M^\beta(L^\alpha q^\beta)^n.
\]
Taking $\beta$ and $\alpha/\beta$ sufficiently small proves
\eqref{eq:two-cocycle-infinity}.

Now assume the additional hypotheses. Set
$\phi_n(x)=\int\log A_n(\omega,x)\,d\Pp$ and
$\psi_n(x)=\int\log B_n(\omega,x)\,d\Pp$.
These are $P$-subadditive sequences, and Jensen's inequality gives
$\phi_n\leq n\log L$ and $\psi_n\leq\log M$ on $X$. Choose $n_0$
with $Mq^{n_0}<1$. Then $\psi_{n_0}<0$ on $X\setminus K_{n_0}$,
and $(\psi_n)_{n\geq1}$ is uniformly bounded above on $X$. Thus
Proposition~\ref{prop:corrector} applies to
$\phi_n,\psi_n$. Choose \(c>0\) sufficiently large that the conclusion of
Proposition~\ref{prop:corrector} holds and \(1/c<\varepsilon\),
where \(\varepsilon\) is the constant in the first assertion
of the present proposition.
With $C_n\eqdef A_nB_n^c$, we obtain
\begin{equation}\label{eq:two-cocycle-negative-log}
\limsup_{n\to \infty}\frac1n\sup_{x\in X}
\int\log C_n(\omega,x)\,d\Pp<0.
\end{equation}

Choose \(0<s_\infty<\varepsilon/c\).
The first assertion applies with \(\alpha=s_\infty\) and
\(\beta=cs_\infty\), since
\(\beta<\varepsilon\) and \(\alpha/\beta=1/c<\varepsilon\). This gives
$\sup_{x\in X\setminus K_n}\int C_n^{s_\infty}\,d\Pp\leq C_\infty\rho_\infty^n$.
Choose $N$ large enough that the latter bound is less than $1$ and
\eqref{eq:two-cocycle-negative-log} holds for $n=N$; put $K=K_N$.
For a sufficiently small $s_0>0$, the moment assumptions give
$\sup_{x\in X}\int C_N^{2s_0}\,d\Pp<\infty$.
The pointwise upper semicontinuity and
Corollary~\ref{cor:semicontinuity} therefore show that
$x\mapsto\int C_N(\omega,x)^s\,d\Pp$ is upper semicontinuous
for $0<s\leq s_0$.
By Lemma~\ref{lem:one-sided-small-powers} and
\eqref{eq:two-cocycle-negative-log}, for each $x\in K$ there is
$s_x\in(0,s_0]$ such that $\int C_N(\omega,x)^{s_x}\,d\Pp<1$.
A finite cover of $K$ and Jensen's inequality allow the exponents
to be decreased to a common $s\leq s_\infty$ for which this
inequality holds on $K$. Jensen also preserves the strict bound on
$X\setminus K$. Thus
\begin{equation}\label{eq:two-cocycle-block}
\sup_{x\in X}\int C_N(\omega,x)^s\,d\Pp<1.
\end{equation}

Put $a_n\eqdef \sup_{x\in X}\int C_n^s\,d\Pp$. These numbers are finite by the
moment assumptions, and submultiplicativity and independence give
$a_{n+m}\leq a_na_m$. Hence $a_N<1$ implies
$a_n\leq C_0\rho_0^n$ for some $C_0>0$ and $\rho_0<1$.
Finally choose $\beta$ and $\alpha/\beta$ small enough that
$t\eqdef \beta-c\alpha\geq0$ and $\alpha/s+t\leq1$. H\"older's inequality
gives
\[
\int A_n(\omega,x)^\alpha B_n(\omega,x)^\beta\,d\Pp
=\int C_n(\omega,x)^\alpha B_n(\omega,x)^t\,d\Pp
\leq a_n^{\alpha/s}
M^t
\leq C_0^{\alpha/s}M^t\rho_0^{\alpha n/s},
\]
uniformly in $x$, as required.
\end{proof}

\subsubsection{Proof of Proposition~\ref{prop:contractions}} ~\\[-0.25cm]

 \emph{(i) Local contraction.}
 Apply the uniform conclusion of
Proposition~\ref{prop:two-cocycle-estimates} on $X$ to the local
pair $A_n,B_n$ defined in~\eqref{eq:localpair}.
\begin{itemize}
	\item The local Lipschitz constants form a submultiplicative cocycle
	by the chain rule and are upper semicontinuous in $x$; $B_n$ is a continuous
	multiplicative cocycle.
	\item For $r\leq\eta_0$,
	$\int A_1(\omega,x)^r\,d\Pp
	\leq\int\Lip(f_\omega)^r\,d\Pp<\infty$.
	\item For $r\leq1/2$, $\int B_n(\omega,x)^r\,d\Pp \leq \left(\frac{P^nV(x)}{V(x)}\right)^{2r}$, with $P^nV/V$ bounded by $C_V$ on $X$ and by $q_V^n$ outside a compact $K_n$.
	\item The averaged logarithms $\int\log A_n(\omega,\cdot)\,d\Pp$
	form the sequence $(\phi_n)_{n\geq1}$, which is asymptotically negative by
	Proposition~\ref{Kingman-noncompact}.
\end{itemize}
The proposition gives: if $\beta$ and $\alpha/\beta$ are small enough, we obtain for some $C>0$ and $\rho<1$
\begin{equation}\label{eq:local-cocycle-estimate}
\int
\limsup_{\substack{y,z\to x\\y\neq z}}
\frac{\Delta_{\alpha,\beta}(f_\omega^n(y),f_\omega^n(z))}
{\Delta_{\alpha,\beta}(y,z)}
\,d\Pp\leq C\rho^n.
\end{equation}
We conclude by applying Lemma~\ref{FatouReverse} to any sequence of distinct pairs $(y,z)\to(x,x)$. Its moment condition holds for sufficiently small $\alpha,\beta$: for each fixed $n$,
$$\int\left(
\frac{\Delta_{\alpha,\beta}(f_\omega^n(y),f_\omega^n(z))}
{\Delta_{\alpha,\beta}(y,z)}\right)^2\,d\Pp$$ is bounded uniformly in $y\neq z$ by H\"older, the moment condition on $\Lip(f_\omega)$ and boundedness of $P^nV/V$ (for instance, $6\alpha\leq\eta_0$ and $6\beta\leq1$ suffice). This proves (i).

\emph{(ii) Global contraction.}
The function $\widehat Lg$ is upper semicontinuous on $X^2$, bounded by
$\Lip(g)$, and satisfies the chain-rule inequality. Apply the uniform conclusion of
Proposition~\ref{prop:two-cocycle-estimates} to the two-point map on
$X^2$, with $A_n=A_n^{[2]}$ and $B_n=B_n^{[2]}$.
Its operator $P^{[2]}$ is Feller, and
$W(x,y)=V(x)+V(y)$ is a Lyapunov function.
These cocycles have the required pointwise semicontinuity, and
the moment of $A_1$ follows from
$\widehat Lf_\omega\leq\Lip(f_\omega)$.
For $2r\leq1$, H\"older and Jensen give
\[
\int B_n(\omega;x,y)^r\,d\Pp
\leq\left(
\frac{P^nV(x)}{V(x)}
\frac{P^nV(y)}{V(y)}
\right)^r.
\]
This is bounded by $C_V^{2r}$ on $X^2$ and by
$C_V^rq_V^{rn}$ outside $K_n\times K_n$.
The averaged logarithms of $A_n^{[2]}$ are upper
semicontinuous, by Corollary~\ref{cor:semicontinuity} and the moment
bound. Every $P^{[2]}$-stationary probability is supported on the
diagonal by stationary proximality. There
$A_n^{[2]}(\omega;z,z)=Lf_\omega^n(z)$, so these averaged logarithms
satisfy the stationary criterion for $P^{[2]}$, since $\lambda(f)<0$.
Proposition~\ref{Kingman-noncompact}, applied on $X^2$ with
the Lyapunov function $W$, gives asymptotic negativity. The
general proposition now gives (ii).

\emph{(iii) Contraction at infinity.} Apply the first conclusion of
Proposition~\ref{prop:two-cocycle-estimates} on $X^2$ with
$A_n=A_n^{[2]}$ and $B_n=B_n^{[2]}$. The required moment bounds
were verified above. This conclusion does not use stationary
proximality. It gives (iii) for $n\geq1$; enlarge $C$ to include $n=0$.
Take a common bound on $\beta$ and $\alpha/\beta$ for (i)--(iii).

\subsection{Weighted spaces}

For \(\gamma>0\), put
\[
 |\varphi|_\gamma
 \eqdef
 \sup_{x\in X}\frac{|\varphi(x)|}{V(x)^\gamma},
 \qquad
 C_\gamma(X)
 \eqdef
 \{\varphi\in C(X;\C):|\varphi|_\gamma<\infty\}.
\]

\begin{lemma}\label{lem:compact-embedding}
The spaces \((\Hol_{\alpha,\beta}(X),\|\cdot\|_{\alpha,\beta})\) and \((C_\gamma(X),|\cdot|_\gamma)\) are Banach.
If \(\gamma\geq\beta\), then
\(\Hol_{\alpha,\beta}(X)\hookrightarrow C_\gamma(X)\) continuously.
If \(\gamma>\beta\), the embedding is compact.
\end{lemma}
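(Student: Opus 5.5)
The proof splits into four parts: completeness of $C_\gamma(X)$, completeness of $\Hol_{\alpha,\beta}(X)$, continuity of the embedding, and compactness for $\gamma>\beta$. Since $X$ is locally compact separable metric and $V\geq1$ is continuous and proper, all four follow by standard arguments.

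\emph{Completeness of $C_\gamma(X)$.} The map $\varphi\mapsto\varphi/V^\gamma$ is an isometry from $C_\gamma(X)$ onto the bounded continuous complex functions with the sup norm. Indeed, $V^\gamma$ is continuous and positive, so this map is a bijection preserving continuity. That space is Banach, hence so is $C_\gamma(X)$.

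\emph{The embedding bound.} Fix $\varphi\in\Hol_{\alpha,\beta}(X)$ and $x\in X$. Using $d\leq1$ and $V\geq1$,
\[
|\varphi(x)|\leq|\varphi(x_0)|+[\varphi]_{\alpha,\beta}V(x)^\beta V(x_0)^\beta .
\]
Dividing by $V(x)^\gamma\geq V(x)^\beta$ gives $|\varphi|_\gamma\leq V(x_0)^\beta\|\varphi\|_{\alpha,\beta}$ whenever $\gamma\geq\beta$. This is the continuous embedding.

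\emph{Completeness of $\Hol_{\alpha,\beta}(X)$.} Let $(\varphi_j)$ be Cauchy in $\|\cdot\|_{\alpha,\beta}$. By the embedding bound it is Cauchy in $C_\beta(X)$, so it converges locally uniformly to some continuous $\varphi$. For fixed $x\neq y$, let $k\to\infty$ in
\[
|(\varphi_j-\varphi_k)(x)-(\varphi_j-\varphi_k)(y)|\leq\varepsilon\,\Delta_{\alpha,\beta}(x,y),\qquad j,k\geq N.
\]
This gives $[\varphi_j-\varphi]_{\alpha,\beta}\leq\varepsilon$ for $j\geq N$. Also $\varphi_j(x_0)\to\varphi(x_0)$. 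Hence $\varphi\in\Hol_{\alpha,\beta}(X)$ and $\varphi_j\to\varphi$ in norm.

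\emph{Compactness for $\gamma>\beta$.} This is the main step. Take a sequence $(\varphi_j)$ in the unit ball of $\Hol_{\alpha,\beta}(X)$. On each compact sublevel set $K_R=\{V\leq R\}$ the functions are uniformly bounded, by $|\varphi_j|\leq(1+V(x_0)^\beta)R^\beta$. They are also equi-H\"older, since $|\varphi_j(x)-\varphi_j(y)|\leq R^{2\beta}d(x,y)^\alpha$ there. By Arzel\`a--Ascoli and a diagonal argument over $R\in\N$, some subsequence converges uniformly on every $K_R$ to a continuous $\varphi$. Passing to the pointwise limit in the seminorm bound shows $\varphi$ lies in the unit ball of $\Hol_{\alpha,\beta}$ as well.

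It remains to upgrade this to convergence in $|\cdot|_\gamma$; this is where the hypothesis $\gamma>\beta$ enters. Given $\varepsilon>0$, choose $R$ with $2(1+V(x_0)^\beta)R^{\beta-\gamma}<\varepsilon$. Outside $K_R$,
\[
\frac{|\varphi_j-\varphi|}{V^\gamma}\leq 2(1+V(x_0)^\beta)V^{\beta-\gamma}<\varepsilon .
\]
On $K_R$ the difference is eventually smaller than $\varepsilon$ by uniform convergence. Hence $|\varphi_j-\varphi|_\gamma\to0$ along the subsequence, and the embedding is compact.
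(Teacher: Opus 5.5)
Your proof is correct and follows the paper's argument. It uses the isometry $\varphi\mapsto V^{-\gamma}\varphi$ onto $C_b(X;\C)$, the bound $|\varphi|\leq|\varphi(x_0)|+V(x_0)^\beta V^\beta[\varphi]_{\alpha,\beta}$ for the embedding, and Arzel\`a--Ascoli on the sublevel sets of $V$ plus the tail estimate $V^{\beta-\gamma}\leq R^{\beta-\gamma}$ for compactness. The only difference is that you write out the completeness of $\Hol_{\alpha,\beta}(X)$ (Cauchy in $C_\beta(X)$, then pass to the limit in the seminorm), which the paper merely cites as standard.
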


\begin{proof}
Since the map $\varphi\longmapsto V^{-\gamma}\varphi$
is an isometric isomorphism from \(C_\gamma(X)\) onto \(C_b(X;\C)\), the space \(C_\gamma(X)\) is Banach.
Completeness of $\Hol_{\alpha,\beta}(X)$ follows by the standard argument for weighted H\"older
spaces; compare~\cite[Sec.~5]{walkden2013transfer}.  On the other hand, for \(\gamma\geq\beta\), the norm $|\varphi|_\gamma+[\varphi]_{\alpha,\beta}$ 
is equivalent to \(\|\varphi\|_{\alpha,\beta}\).  Indeed,
\[
 |\varphi(x_0)|
 \leq V(x_0)^\gamma|\varphi|_\gamma,
 \qquad
 |\varphi|_\gamma
 \leq
 |\varphi(x_0)|
 +V(x_0)^\beta[\varphi]_{\alpha,\beta}.
\]
In particular, there is a constant \(C>0\) such that
$ |\varphi|_\gamma
    \leq C\|\varphi\|_{\alpha,\beta}$,
which proves the continuity of the embedding
\(\Hol_{\alpha,\beta}(X)\hookrightarrow C_\gamma(X)\).

If \(\gamma>\beta\), let $(\varphi_j)_{j\geq1}$ be bounded in
$\Hol_{\alpha,\beta}(X)$. On every compact sublevel of $V$, it is
uniformly bounded and equicontinuous. Hence Arzel\`a--Ascoli and a
diagonal argument give a locally uniformly convergent subsequence.
Moreover,
$\sup_{V(x)>R}|\varphi_j(x)|V(x)^{-\gamma}
 \leq C R^{\beta-\gamma}$,
uniformly in $j$. Since $\gamma>\beta$, the local uniform convergence
therefore upgrades to convergence in $C_\gamma(X)$, proving compactness.
\end{proof}

\begin{lemma}
	The operator \(P\) is bounded on
	\(\Hol_{\alpha,\beta}(X)\) and \(C_\gamma(X)\)
	for all sufficiently small \(\alpha,\beta,\gamma>0\).
\end{lemma}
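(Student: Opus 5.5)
We treat the two spaces separately. For $C_\gamma(X)$ the claim follows directly from the drift. For $\varphi\in C_\gamma(X)$ we have $|P\varphi|\leq P|\varphi|\leq|\varphi|_\gamma PV^\gamma$. For $\gamma\leq1$, Jensen gives $PV^\gamma\leq(PV)^\gamma$, and Proposition~\ref{LyapunovFunctionsProperties} gives $(PV)^\gamma\leq C_V^\gamma V^\gamma$. Hence $|P\varphi|_\gamma\leq C_V^\gamma|\varphi|_\gamma$. Continuity of $P\varphi$ follows from the Feller property after truncation: $\varphi\wedge R$ type truncations (real and imaginary parts) are continuous and bounded, so $P$ of each truncation is continuous. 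The tails are controlled uniformly on compact sets by $\int V^\gamma\1_{\{V>R\}}\,dP(x,\cdot)\leq R^{\gamma-1}PV(x)$, which is locally bounded because $PV\leq qV+C$ and $V$ is continuous. Thus $P\varphi$ is a locally uniform limit of continuous functions.

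For $\Hol_{\alpha,\beta}(X)$ we use the contraction at time $n=1$. For $x\neq y$,
\[
|P\varphi(x)-P\varphi(y)|\leq[\varphi]_{\alpha,\beta}\int\Delta_{\alpha,\beta}(f_\omega(x),f_\omega(y))\,d\Pp .
\]
We take $\alpha,\beta$ small as in Proposition~\ref{prop:contractions}. By H\"older, with the same computation as in its proof, the integral is at most $\Delta_{\alpha,\beta}(x,y)\int\Lip(f_\omega)^\alpha\bigl(V(f_\omega x)V(f_\omega y)/V(x)V(y)\bigr)^\beta\,d\Pp$. This is bounded by $\bigl(\int\Lip(f_\omega)^{2\alpha}\bigr)^{1/2}\bigl(PV^{4\beta}(x)PV^{4\beta}(y)\bigr)^{1/4}V(x)^{-\beta}V(y)^{-\beta}$. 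With $2\alpha\leq\eta_0$ and $4\beta\leq1$, Jensen and Proposition~\ref{LyapunovFunctionsProperties} bound the right-hand side by a constant $C_1$ independent of $x,y$, so $[P\varphi]_{\alpha,\beta}\leq C_1[\varphi]_{\alpha,\beta}$. (One could alternatively quote \eqref{eq:global-weighted-contraction} with $n=1$, but that needs stationary proximality; the direct H\"older bound avoids it.)

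It remains to bound $|P\varphi(x_0)|$. Using the embedding of Lemma~\ref{lem:compact-embedding} with $\gamma=\beta$, we get $|P\varphi(x_0)|\leq PV^\beta(x_0)|\varphi|_\beta\leq C\|\varphi\|_{\alpha,\beta}$. Continuity of $P\varphi$ is part of the preceding case, or follows from the H\"older estimate together with continuity of $V$. Combining the two estimates gives $\|P\varphi\|_{\alpha,\beta}\leq C\|\varphi\|_{\alpha,\beta}$.

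The only delicate point is the continuity of $P\varphi$ for unbounded $\varphi$, which is handled by the uniform integrability argument above. The remaining steps are routine applications of H\"older's and Jensen's inequalities.
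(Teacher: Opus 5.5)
Your proof is correct and follows the paper's argument. You use Jensen with the drift bound on $C_\gamma(X)$, a one-step H\"older/Cauchy--Schwarz estimate on $\Lip(f_\omega)^\alpha$ and the ratios $V(f_\omega(\cdot))/V$ for $[P\varphi]_{\alpha,\beta}$, and a bound at $x_0$ through $PV^\beta(x_0)$, which is exactly what the embedding estimate amounts to.

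The only difference is cosmetic: you prove continuity of $P\varphi$ by truncation plus the Feller property instead of almost-sure convergence plus an $L^{1/\gamma}$ uniform-integrability bound. You should state $\gamma<1$ explicitly there, since the tail bound $R^{\gamma-1}PV$ only decays when $\gamma<1$.
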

\begin{proof}
For $\varphi\in C_\gamma(X)$ and $0<\gamma<1$, $P\varphi$ is continuous
by uniform integrability: if $x_j\to x$, then $f_\omega(x_j)\to f_\omega(x)$
almost surely and
$\sup_{j\geq1}\int|\varphi(f_\omega(x_j))|^{1/\gamma}\,d\Pp<\infty$,
since $\sup_{j\geq1} PV(x_j)<\infty$. Jensen's inequality gives
	${|P\varphi(x)|}/{V(x)^\gamma}
	\leq
	|\varphi|_\gamma
	(PV(x)/V(x))^\gamma$,
	and hence $|P\varphi|_\gamma\leq C|\varphi|_\gamma$ for some $C$.
	
	For $\Hol_{\alpha,\beta}(X)$, if $\alpha$ and $\beta$ are sufficiently
	small, then $$A\eqdef \sup_{x\neq y}\int
	\frac{\Delta_{\alpha,\beta}(f_\omega(x),f_\omega(y))}
	{\Delta_{\alpha,\beta}(x,y)}
	\,d\Pp<\infty.$$ This follows from H\"older's inequality, the
	moment assumption on $\Lip(f_\omega)$, and the bound on $PV/V$.
	Consequently, $[P\varphi]_{\alpha,\beta}
	\leq A[\varphi]_{\alpha,\beta}$. It remains to control
	$|P\varphi(x_0)|$, using
	\[
	|P\varphi(x_0)| \leq 
	|\varphi(x_0)|
	+[\varphi]_{\alpha,\beta}
	\int\Delta_{\alpha,\beta}(f_\omega(x_0),x_0)\,d\Pp,\]
	where $\int\Delta_{\alpha,\beta}(f_\omega(x_0),x_0)\,d\Pp \leq  V(x_0)^\beta\bigl(PV(x_0)\bigr)^\beta<+\infty$.
	Thus, 
	\[\|P\varphi\|_{\alpha,\beta}
	=|P\varphi(x_0)|+[P\varphi]_{\alpha,\beta}
	\leq C\|\varphi\|_{\alpha,\beta}.\]
	Hence $P$ is bounded on $\Hol_{\alpha,\beta}(X)$.
\end{proof}

\subsection{Quasi-compactness}
We write $r(\cdot)$ and $r_{\mathrm{ess}}(\cdot)$ for the spectral
and essential spectral radii, respectively. Here we prove Theorem~\ref{thm:quasicompact} using  Proposition~\ref{prop:contractions}(i) and (iii).

\begin{lemma}\label{cor:one-block}
	For \(\alpha,\beta\) as in Proposition~\ref{prop:contractions}, there are
	\(N\geq1\), a compact set \(K\subset X\), and \(r>0\) such that
	\[
	(x,y)\notin K\times K
	\quad\text{or}\quad
	d(x,y)\leq r
	\quad\Longrightarrow\quad
	\int \Delta_{\alpha,\beta}(f_\omega^N(x),f_\omega^N(y)) \,d\Pp
	\leq\frac12\Delta_{\alpha,\beta}(x,y).
	\]
\end{lemma}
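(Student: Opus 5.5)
We combine parts (i) and (iii) of Proposition~\ref{prop:contractions}: part (iii) handles pairs leaving a compact set, and part (i) handles nearby pairs. First we choose the time $N$. With $C,\rho$ as in Proposition~\ref{prop:contractions}, we fix $N\geq1$ with $C\rho^N<1/2$, say $C\rho^N\leq 1/4$ to keep some room. We then let $K=K_N$ be the compact set given by (iii) for this $N$. By~\eqref{eq:infinity-weighted-contraction}, every pair $(x,y)\notin K\times K$ with $x\neq y$ satisfies
\[
\int\Delta_{\alpha,\beta}(f_\omega^N(x),f_\omega^N(y))\,d\Pp\leq \tfrac14\Delta_{\alpha,\beta}(x,y).
\]
For $x=y$ both sides vanish, so the first alternative of the lemma is settled.

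Next we treat pairs in $K\times K$ that are close. Consider
\[
F(y,z)=\int\frac{\Delta_{\alpha,\beta}(f_\omega^N(y),f_\omega^N(z))}{\Delta_{\alpha,\beta}(y,z)}\,d\Pp,
\qquad y\neq z.
\]
By~\eqref{eq:local-weighted-contraction}, for each $x\in K$ we have $\limsup_{y,z\to x,\,y\neq z}F(y,z)\leq 1/4$. Hence there is an open ball $B(x,r_x)$ on which $F\leq1/2$ for all distinct $y,z$ in the ball.

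We then make this uniform over $K$ by a Lebesgue number argument. Suppose no single $r>0$ works. Then there are $y_j\neq z_j$ in $K$ with $d(y_j,z_j)\to0$ and $F(y_j,z_j)>1/2$. By compactness of $K$, along a subsequence $y_j\to x$ and $z_j\to x$ for some $x\in K$. This contradicts the limsup bound at $x$. So some $r>0$ works, and for $(x,y)\in K\times K$ with $d(x,y)\leq r$ we get the desired bound with constant $1/2$.

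The combinatorics here are routine, so the delicate point is only to make sure that (i) really controls every sequence of distinct pairs converging to the diagonal point $(x,x)$ with $x\in K$. This is exactly what~\eqref{eq:local-weighted-contraction} provides, since it is a joint limsup in $(y,z)$ and not an iterated one. The remaining checks are that the same $\alpha,\beta$ range serves both (i) and (iii), which Proposition~\ref{prop:contractions} already arranges with common $C,\rho$, and that pairs with $d(x,y)\leq r$ but $(x,y)\notin K\times K$ are already covered by the first case.
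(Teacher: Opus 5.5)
Your proof is correct and follows the paper's argument. You choose $N$ with $C\rho^N<1/2$, take $K=K_N$ from part (iii) for pairs outside $K\times K$, and use the joint limsup in part (i) together with compactness of $K$ for nearby pairs; the only cosmetic difference is that you get the uniform radius $r$ by a sequential-compactness contradiction instead of the paper's finite subcover and Lebesgue number, and the two arguments are equivalent.
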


\begin{proof}
	We apply Proposition~\ref{prop:contractions}, choosing $N$ such that $C\rho^N<1/2$. If $K=K_N$ is the compact set given by the proposition, then, by item~(iii), the conclusion holds when $(x,y)\notin K\times K$. We now consider $x,y\in K$. Proposition~\ref{prop:contractions}\textup{(i)} gives for each $z\in X$ that
	\[
	\limsup_{\substack{x,y\to z\\x\neq y}}
	\int \frac{\Delta_{\alpha,\beta}(f_\omega^N(x),f_\omega^N(y))}
	{\Delta_{\alpha,\beta}(x,y)}\,d\Pp <\frac12.
	\]
	Hence every \(z\in K\) has an open neighborhood \(U_z\) such that
	\begin{equation} \label{eq:distinct}
		\int \frac{\Delta_{\alpha,\beta}(f_\omega^N(x),f_\omega^N(y))}
		{\Delta_{\alpha,\beta}(x,y)} \,d\Pp<\frac12
		\qquad
		\text{for all distinct }x,y\in U_z.
	\end{equation}
	Choose a finite subcover $K\subset U_{z_1}\cup\cdots\cup U_{z_m}$
	and let \(\ell>0\) be a Lebesgue number for the induced cover of \(K\). Taking \(0<r<\ell\), any  \(x,y\in K\) with \(0<d(x,y)\leq r\) belong to a common \(U_{z_i}\), and hence~\eqref{eq:distinct} holds, completing the proof.
\end{proof}

Thanks to this lemma, we can prove a Lasota-Yorke inequality.

\begin{proposition}\label{prop:LY}
Choose \(\alpha,\beta\) as in Lemma~\ref{cor:one-block} and
\(\beta<\gamma<1\).
There are \(N\geq1\), \(\rho\in(0,1)\), and
\(C>0\) such that
\begin{equation}\label{eq:LY}
        [P^N\varphi]_{\alpha,\beta}
        \leq
        \rho[\varphi]_{\alpha,\beta}
        +C|\varphi|_\gamma \quad \text{for every \(\varphi\in\Hol_{\alpha,\beta}(X)\).}
\end{equation}
\end{proposition}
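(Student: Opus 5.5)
We take $N$, $K$ and $r$ from Lemma~\ref{cor:one-block} and split pairs $x\neq y$ into two classes. The \emph{good} pairs have $(x,y)\notin K\times K$ or $d(x,y)\leq r$. The \emph{bad} pairs have $x,y\in K$ and $d(x,y)>r$.

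For a good pair we use the pointwise bound
\[
|P^N\varphi(x)-P^N\varphi(y)|
\leq\int|\varphi(f^N_\omega x)-\varphi(f^N_\omega y)|\,d\Pp
\leq[\varphi]_{\alpha,\beta}\int\Delta_{\alpha,\beta}(f^N_\omega x,f^N_\omega y)\,d\Pp.
\]
Lemma~\ref{cor:one-block} bounds the last integral by $\tfrac12\Delta_{\alpha,\beta}(x,y)$. Dividing by $\Delta_{\alpha,\beta}(x,y)$ gives the ratio bound $\tfrac12[\varphi]_{\alpha,\beta}$, so we may take $\rho=\tfrac12$.

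For a bad pair, the distance and the weights are uniformly controlled. Since $d(x,y)>r$ and $V\geq1$, we have $\Delta_{\alpha,\beta}(x,y)\geq r^\alpha$. We estimate crudely:
\[
|P^N\varphi(x)-P^N\varphi(y)|\leq|P^N\varphi(x)|+|P^N\varphi(y)|.
\]
By Jensen's inequality with $\gamma<1$,
\[
|P^N\varphi(x)|\leq|\varphi|_\gamma\,P^N(V^\gamma)(x)\leq|\varphi|_\gamma\,(P^NV(x))^\gamma.
\]
Proposition~\ref{LyapunovFunctionsProperties}(2) gives $P^NV\leq C_VV$, and $V$ is bounded by some $R$ on the compact set $K$. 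Hence the ratio for a bad pair is at most $2r^{-\alpha}(C_VR)^\gamma|\varphi|_\gamma$, which is the constant $C$.

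Taking the supremum over both classes gives~\eqref{eq:LY} with $\rho=1/2$ and this $C$. Two technical points need checking. First, $P^N\varphi$ must be well defined and integrable, which holds since $|\varphi|\leq|\varphi|_\gamma V^\gamma$ and $P^NV<\infty$. Second, $P^N\varphi$ must be continuous, which follows from the previous lemma. The only real input is Lemma~\ref{cor:one-block}, so we expect no serious obstacle; the one delicate point is that the bad region is bounded away from the diagonal, which is exactly what makes $\Delta_{\alpha,\beta}$ bounded below there.
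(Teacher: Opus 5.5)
Your proof is correct and follows the paper's argument: it uses the same splitting of pairs given by Lemma~\ref{cor:one-block}, the bound $\tfrac12[\varphi]_{\alpha,\beta}$ on pairs outside $K\times K$ or at distance at most $r$, and Jensen's inequality together with boundedness of $P^NV$ on $K$ for the remaining pairs. The differences are cosmetic. You bound $\Delta_{\alpha,\beta}$ below by $r^\alpha$ using $V\geq1$, where the paper takes a positive minimum over the compact set of remaining pairs. You also take $\rho=\tfrac12$ directly, using $\max\{a,b\}\leq a+b$.
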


\begin{proof}
Let \(K,r,N\) be supplied by Lemma~\ref{cor:one-block}.  If
\((x,y)\notin K\times K\) or \(d(x,y)\leq r\), then
\begin{equation}\label{eq:bound-Cc}
 |P^N\varphi(x)-P^N\varphi(y)| \leq \int | \varphi(f_\omega^N(x))-\varphi(f_\omega^N(y)) | \,d\Pp 
 \leq
 \frac12[\varphi]_{\alpha,\beta}\Delta_{\alpha,\beta}(x,y).
\end{equation}
The remaining pairs belong to the compact set $  \mathcal C
        =\{(x,y)\in K\times K:d(x,y)\geq r\}$.
On \(\mathcal C\), \(\Delta_{\alpha,\beta}\) has a positive minimum $\delta_{\mathcal{C}} \eqdef  \min_{(x,y)\in\mathcal{C}} \Delta_{\alpha,\beta}(x,y) > 0$, while
Jensen's inequality and the boundedness of $P^NV$  on $K$ give for $x\in K$ that 
\[
 |P^N\varphi(x)|
 \leq \int |\varphi(f_\omega^N(x)) |\,d\Pp \leq 
 |\varphi|_\gamma \int V(f_\omega^N(x))^\gamma \,d\Pp
 \leq
 |\varphi|_\gamma(P^NV(x))^\gamma
 \leq C_K |\varphi|_\gamma.
\]
Applying the triangle inequality gives
\begin{equation}\label{eq:bound-C}
\sup_{(x,y) \in \mathcal{C}} \frac{|P^N\varphi(x)-P^N\varphi(y)|}{\Delta_{\alpha,\beta}(x,y)} \leq C |\varphi|_\gamma \quad \text{where} \ \ C \eqdef  \frac{2C_K}{\delta_{\mathcal{C}}}
\end{equation}
Taking the global supremum over all $x \neq y$, equations~\eqref{eq:bound-Cc} and~\eqref{eq:bound-C} combine to show that for any choice of $\rho \in (1/2, 1)$, 
\[
[P^N\varphi]_{\alpha,\beta} \leq \max\left\{ \frac{1}{2}[\varphi]_{\alpha,\beta}, \, C|\varphi|_\gamma \right\} \leq \rho[\varphi]_{\alpha,\beta} + C|\varphi|_\gamma,
\]
which proves the Lasota--Yorke inequality~\eqref{eq:LY}. 
\end{proof}

\begin{proof}[Proof of Theorem~\ref{thm:quasicompact}] Apply Proposition~\ref{prop:LY}
with \(\beta<\gamma<1\).
Lemma~\ref{lem:compact-embedding} makes the strong unit ball relatively
compact in the weak norm. Moreover, the drift estimate and Jensen's inequality give
\( |P^n\varphi|_\gamma\leq C_V^\gamma|\varphi|_\gamma \) for every \(n\geq0\), so \(P^n\) is uniformly bounded in the weak norm.
The Ionescu--Tulcea--Marinescu--Hennion
quasi-compactness criterion applied to~\eqref{eq:LY} yields
$r_{\mathrm{ess}}\big(P|\Hol_{\alpha,\beta}(X)\big)
        \leq\rho^{1/N}<1$.
\end{proof}
\subsection{Spectral gap: proof of  Theorem~\ref{thm:product-weighted-global-contraction}}
Proposition~\ref{prop:contractions} gives the contraction inequality;
it remains to prove the spectral gap. Decreasing $\beta$ if necessary,
assume $\beta<1$. For all $x,y\in X$,
\begin{align*}
 |P^n\varphi(x)-P^n\varphi(y)|
 &\leq
 \int
 |\varphi(f_\omega^n(x))-\varphi(f_\omega^n(y))|
 \,d\Pp
 \\ &\leq
 [\varphi]_{\alpha,\beta}
 \int
 \Delta_{\alpha,\beta}
 \bigl(f_\omega^n(x),f_\omega^n(y)\bigr)
 \,d\Pp  \leq
 C\rho^n[\varphi]_{\alpha,\beta}
 \Delta_{\alpha,\beta}(x,y),
\end{align*}
so $        [P^n\varphi]_{\alpha,\beta}
        \leq
        C\rho^n[\varphi]_{\alpha,\beta}$.
Applying the inequality with $x=x_0$ and integrating $y$ with respect to any stationary measure $\nu$ gives
\[ |P^n\varphi(x_0)-\int \varphi\,d\nu|\leq
 C\rho^n [\varphi]_{\alpha,\beta}
 \int \Delta_{\alpha,\beta}(x_0,y)\,d\nu
\]
with $\int \Delta_{\alpha,\beta}(x_0,y)\,d\nu\leq  V(x_0)^\beta \left(\int V\,d\nu\right)^\beta <\infty$ by Proposition \ref{prop:tightness}. 
It follows that
\[
 \bigl\|P^n\varphi-\int \varphi \,d\nu \cdot \1\bigr\|_{\alpha,\beta}\leq  C'\rho^n
 \|\varphi\|_{\alpha,\beta}.
\]
If $\nu'$ is another stationary probability, this estimate, applied
to bounded Lipschitz functions and integrated against $\nu'$, shows
that $\nu'=\nu$. This completes the proof.

\section{Compact repellers}\label{sec:repellers}
\label{subsec:compact-repellers}

Let $(M,d)$ be a compact metric space. Without loss of generality we may assume $d\leq1$.
Let $f$ be a random Lipschitz map of $M$. Let $E\subset M$ be a
nonempty closed set \emph{totally invariant} under $f$, that is,
$f_\omega^{-1}(E)=E$ for $\Pp$-almost every~$\omega$.
Put $X=M\setminus E$, and assume $X\neq\emptyset$.
For any Lipschitz map $g$ of $M$ leaving $E$ totally invariant, set
\[
l_E(g)=\Lip(g)+\sup_{x\notin E}\frac{d(x,E)}{d(g(x),E)}.
\]
In particular, if $g$ is bi-Lipschitz, then
$l_E(g)\leq l(g)\eqdef \Lip(g)+\Lip(g^{-1})$. 	In what follows, we assume $\int\log^+l_E(f_\omega)\,d\Pp<\infty$.

\begin{definition}\label{def:repelling-set}
	We say that $E$ is \emph{repelling} if, for every
	$x\in\partial E$ and all sufficiently large $n$,
	\[
	\limsup_{\substack{y\to x\\y\in X}}
	\int\log\frac{d(y,E)}{d(f_\omega^n(y),E)}\,d\Pp<0.
	\]
\end{definition}

\begin{rem} For $x\in\partial E$ we denote 
$$
\psi_{n}(x)=\limsup_{\substack{y\to x\\y\notin E}} \int\log\frac{d(y,E)}{d(f_\omega^n(y),E)}\,d\Pp.
$$
	\begin{enumerate}[label=\textup{(\roman*)}]
    	\item  $(\psi_{n})_{n\geq1}$ is a $P$-subadditive sequence of upper semicontinuous functions on the compact set $\partial E$. Indeed, writing $a_n(y)=\int\log\frac{d(y,E)}{d(f_\omega^n(y),E)}\,d\Pp$, we have $a_{n+m}=a_m+P^ma_n$ on~$X$. Since $a_n\leq n\int\log l_E(f_\omega)\,d\Pp$ and $f_\omega^m(\partial E)\subset\partial E$, reverse Fatou gives $\psi_{n+m}\leq\psi_{m}+P^m\psi_{n}$. Thus one might lighten the assumptions to have a repeller:
	 \begin{itemize}
	  	\item If, for every \(P\)-stationary measure \(\mu\) supported on
\(\partial E\), there is \(n\geq1\) such that
\(\int\psi_{n}\,d\mu<0\), then \(E\) is repelling by
Proposition~\ref{Kingman-noncompact}, applied on \(\partial E\).
	  	\item If, for every $x\in\partial E$, there is
	  	$n=n(x)$ such that $\psi_{n}(x)<0$, then $E$ is
	  	repelling by Lemma~\ref{partition} and compactness.
        \end{itemize}
	\item By upper semicontinuity, compactness of $\partial E$, and the subadditivity of $(\psi_{n})_{n\geq 1}$, it is equivalent to say that there is an integer $n\geq1$ and a neighborhood $U$ of $E$ such that 
	\[
	\sup_{y\in U\setminus E}
	\int\log\frac{d(y,E)}{d(f_\omega^n(y),E)}\,d\Pp<0.
	\]
	\end{enumerate}
\end{rem}

Under a stronger moment assumption, a repeller yields a Lyapunov
function for the dynamics on $X=M\setminus E$.

\begin{proposition}\label{prop:repeller-drift}
	Assume that $E$ is repelling and that
	$\int l_E(f_\omega)^\eta\,d\Pp<\infty$ for some $\eta>0$.
	For all sufficiently small $\gamma>0$, there is $N\geq1$ such that
	\begin{equation}\label{eq:repeller-V}
		V(x)\eqdef d(x,E)^{-\gamma}
	\end{equation}
	is a Lyapunov function for $P^N$. Moreover, there exists a Lyapunov function $V'$ for $P$ equivalent to $V$, in the sense that $V\leq V'\leq M V$ for some $M\geq1$.
\end{proposition}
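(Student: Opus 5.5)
The proof has three parts: a drift estimate near $E$, trivial bounds away from $E$, and an averaging construction of $V'$ from $V$. For $x\in X$ set
\[
\rho_n(\omega,x)=\frac{d(x,E)}{d(f^n_\omega(x),E)},
\]
so that
\[
\frac{V(f^n_\omega(x))}{V(x)}=\rho_n(\omega,x)^{\gamma}.
\]
By Remark (ii) after Definition~\ref{def:repelling-set}, there are $N\geq1$, a neighborhood $U$ of $E$ and $c>0$ such that
\[
\int\log\rho_N(\omega,y)\,d\Pp\leq -c\quad\text{for } y\in U\setminus E.
\]
The cocycle $\rho_n$ is multiplicative, $\rho_{n+m}(\omega,x)=\rho_m(\omega,x)\rho_n(\sigma^m\omega,f^m_\omega x)$. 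Moreover $\rho_1(\omega,x)\leq l_E(f_\omega)$, so by independence $\rho_N(\cdot,x)^{\eta'}$ has moments bounded uniformly in $x$ for some $\eta'>0$. Hölder with $\eta/N$ suffices for this.

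\textbf{Drift near $E$.} The aim is a uniform version of Lemma~\ref{lem:one-sided-small-powers}: find $\gamma_0>0$ and $q<1$ with
\[
\sup_{y\in U\setminus E}\int\rho_N(\omega,y)^\gamma\,d\Pp\leq q\quad\text{for all }0<\gamma<\gamma_0.
\]
The proof of that lemma is already quantitative. Use
\[
e^{\gamma t}\leq1+\gamma t+\tfrac12\gamma^2t^2e^{\gamma|t|}.
\]
For the quadratic term, the uniform moment bound handles $t^+$. For $t=\log\rho_N<0$ with $\gamma\leq1$, one has $t^2e^{\gamma|t|}e^{\gamma t}$-type terms bounded since $t^2e^{\gamma t}\leq C/\gamma^2$; instead apply the truncation $Z_M=\max\{Z,-M\}$. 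Truncation only decreases $\int e^{\gamma Z}$ and increases the mean by at most... not uniformly. So rather bound directly: $e^{\gamma t}\leq 1+\gamma t+\gamma^2t^2$ for $t\leq0$ and $\gamma|t|$ bounded, and for $t\le -1/\gamma$ use $e^{\gamma t}\le 1+\gamma t+\gamma^2t^2$ still valid since $e^{s}\leq1+s+s^2$ for all $s\leq0$.

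So it suffices to have $\int(\log\rho_N)^2e^{\gamma(\log\rho_N)^+}$ uniformly bounded in $y$. The positive part is handled by the moment bound. The negative part $\int((\log\rho_N)^-)^2$ could be unbounded, and this is the main obstacle. To fix it, the plan is to use $e^{s}\leq1+s+s^2$ only for $s\geq-1$ and $e^{s}\leq e^{-1}\leq 1+s\cdot\theta$-type linear bounds below. Concretely: $e^{\gamma t}\leq 1+\gamma\max\{t,-1/\gamma\}+\gamma^2\min\{t^2,\gamma^{-2}\}e^{\gamma t^+}$. Then the linear term is $\gamma\int\max\{\log\rho_N,-1/\gamma\}$, which tends to $\gamma\int\log\rho_N\leq-\gamma c$ uniformly. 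This uniformity needs uniform integrability of $(\log\rho_N)^-$ on $U\setminus E$. That in turn is provided by $\log\rho_N\geq-\sum_j\log\Lip$-type lower bounds: $d(f y,E)\leq\Lip(f)\,d(y,E)$ since $E$ is invariant, so $\rho_1\geq \Lip(f_\omega)^{-1}\geq l_E(f_\omega)^{-1}$. Hence $|\log\rho_N|$ has uniform exponential moments, all terms are uniform, and $q=1-\gamma c/2$ works for small $\gamma$.

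\textbf{Away from $E$.} On $X\setminus U$ one has $d(x,E)\geq r>0$ and $d\leq1$, so $V\leq r^{-\gamma}$ there. Also
\[
P^NV(x)=V(x)\int\rho_N^\gamma\leq r^{-\gamma}\sup_x\int\rho_N^\gamma\eqdef C
\]
on $X\setminus U$, finite for $\gamma\leq\eta'$. Hence $P^NV\leq qV+C$ on $X$. Moreover $V$ is continuous, at least $1$ (since $d\leq1$), and proper: $\{V\leq R\}=\{d(\cdot,E)\geq R^{-1/\gamma}\}$ is closed in compact $M$ and disjoint from $E$, hence compact in $X$.

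\textbf{Construction of $V'$.} Set $V'=\sum_{k=0}^{N-1}a^{k}P^kV$ with $a=q^{-1/N}$. Then $V'\geq V$, and $V'\leq MV$ because $P^kV\leq L^kV$ with $L=\sup_x\int\rho_1^\gamma<\infty$. Next,
\[
PV'=\sum_{k=1}^{N}a^{k-1}P^kV=a^{-1}\bigl(V'-V\bigr)+a^{N-1}P^NV\leq a^{-1}V'-a^{-1}V+a^{N-1}(qV+C),
\]
and $a^{N-1}q=a^{-1}$, so $PV'\leq q^{1/N}V'+C'$. Continuity of $V'$ follows as in the boundedness lemma by uniform integrability, and properness follows from $V'\geq V$.
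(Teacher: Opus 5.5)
Your proposal is correct and follows the paper's proof essentially step for step. The shared steps are: a Taylor expansion of $\int\rho_N^\gamma\,d\Pp$ uniform on $U\setminus E$, giving $q=1-\gamma c/2$; the trivial bound on $X\setminus U$; and the same averaged function $V'=\sum_{k=0}^{N-1}q^{-k/N}P^kV$ with the same telescoping computation. The truncation detour in your drift step is unnecessary. The two-sided bound $|\log\rho_N(\omega,y)|\leq\sum_{j=0}^{N-1}\log l_E(f_{\omega_j})$ that you eventually reach is exactly the bound the paper uses, and it already gives uniform exponential moments, so $e^t\leq1+t+\frac12t^2e^{|t|}$ finishes the argument directly.
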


\begin{proof}
By the repelling assumption, choose $N\geq1$, a neighborhood $U$ of $E$, and $a>0$ such that
\[
\sup_{y\in U\setminus E}\int Z_N(\omega,y)\,d\Pp\leq-a,
\qquad
Z_N(\omega,y)\eqdef \log\frac{d(y,E)}{d(f_\omega^N(y),E)}.
\]
The quantity $|Z_N(\omega,y)|$ is bounded, uniformly in $y\in X$, by
$\sum_{j=0}^{N-1}\log l_E(f_{\omega_j})$. This random bound has an exponential moment by the hypothesis and independence of the symbols. Applying
$e^t\leq1+t+\frac12t^2e^{|t|}$ to $t=\gamma Z_N(\omega,y)$ therefore gives, uniformly in $y$ as $\gamma\downarrow 0$,
\[
\int e^{\gamma Z_N(\omega,y)}\,d\Pp
\leq1+\gamma\int Z_N(\omega,y)\,d\Pp+O(\gamma^2).
\]
For sufficiently small $\gamma>0$, set $q\eqdef 1-a\gamma/2\in(0,1)$.
Then $P^NV(y)/V(y)\leq q$ for $y\in U\setminus E$, where
$V(y)=d(y,E)^{-\gamma}$. On $X\setminus U$, the function $V$ is bounded; also
$P^NV/V\leq\int l_E(f_\omega^N)^\gamma\,d\Pp<\infty$ on $X$.
Hence $P^NV\leq qV+C$ on $X$ for some $C<\infty$.

Set $r=q^{1/N}$ and $V'=\sum_{k=0}^{N-1}r^{-k}P^kV$. A direct calculation gives
\[
PV'=r\left(\frac{P^NV}{r^N}
+\sum_{k=1}^{N-1}\frac{P^kV}{r^k}\right)
\leq rV'+\frac{C}{r^{N-1}}.
\]
For each fixed $k$, the bound
$V(f_\omega^k(x))\leq V(x)l_E(f_\omega^k)^\gamma$
shows that $P^kV/V$ is bounded. It also gives an integrable domination
locally uniform in $x\in X$; hence $P^kV$ is continuous by dominated
convergence. Consequently $V'$ is continuous, $V\leq V'\leq MV$ for some
$M<\infty$, and $V'$ is proper since $V$ is proper.
\end{proof}

If the restricted map on $X=M\setminus E$ is mostly contracting,
we can apply Theorems~\ref{thm:quasicompact}
and~\ref{thm:product-weighted-global-contraction} with $V'$.
Since $V\leq V'\leq CV$, the corresponding weighted functions are
uniformly comparable, and the conclusions also hold with $V$ in
place of $V'$. In this case,
$\Delta_{\alpha,\beta}(x,y)
=d(x,y)^\alpha/[d(x,E)^{\gamma\beta}d(y,E)^{\gamma\beta}]$.
Renaming $\gamma\beta$ as $\beta$, the corresponding weighted
H\"older space $\Hol_{\alpha,\beta}(X)$ has norm equivalent to
\[
\|\varphi\|_{\alpha,\beta}
\asymp |\varphi(x_0)|+
\sup_{x\neq y}\frac{|\varphi(x)-\varphi(y)|d(x,E)^{\beta}d(y,E)^{\beta}}{d(x,y)^\alpha}.
\]

This gives the following result.

\begin{corollary}\label{cor:repeller-quasicompact}
	Let $f$ be a random Lipschitz map of a compact metric space
	$(M,d)$, and let $E\subset M$ be a totally invariant closed set.
	Assume $\int l_E(f_\omega)^\eta\,d\Pp<\infty$ for some $\eta>0$,
	that $E$ is repelling, and that $\lambda(\mu)<0$ for every
	stationary probability $\mu$ with $\mu(E)=0$.
	Then, for all sufficiently small $\alpha,\beta>0$ with
	$\alpha/\beta$ sufficiently small, $P$ is quasi-compact on
	$\Hol_{\alpha,\beta}(X)$, where $X=M\setminus E$.
	If $f|_X$ is also stationarily proximal, then, for some $C>0$
	and $\rho\in(0,1)$,
\[
\int\frac{d(f_\omega^n(x),f_\omega^n(y))^\alpha}
 {d(f_\omega^n(x),E)^\beta d(f_\omega^n(y),E)^\beta}\,d\Pp
 \leq C\rho^n\frac{d(x,y)^\alpha}{d(x,E)^\beta d(y,E)^\beta}, \quad \text{for every $x,y\in X$ and $n\geq0$,}
\]
and $P$ has a spectral gap on $\Hol_{\alpha,\beta}(X)$.
\end{corollary}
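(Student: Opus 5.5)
The plan is to reduce Corollary~\ref{cor:repeller-quasicompact} to Theorems~\ref{thm:quasicompact} and~\ref{thm:product-weighted-global-contraction}, applied to the restricted random map $f|_X$ on $X=M\setminus E$ with the Lyapunov function $V'$ of Proposition~\ref{prop:repeller-drift}.

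\emph{Step 1: the setting of Theorem~\ref{thm:quasicompact}.} Since $E$ is closed, $X$ is open in the compact space $M$, so $X$ is locally compact and separable with $d\leq1$. Total invariance gives $f_\omega(X)\subset X$ almost surely, so $f|_X$ is a Bernoulli random map of $X$. It is Lipschitz with $\Lip(f_\omega|_X)\leq\Lip(f_\omega)\leq l_E(f_\omega)$, so the moment condition~\eqref{eq:random-map-moment} holds with $\eta_0=\eta$. The Feller property on $X$ follows from continuity of each $f_\omega$.

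\emph{Step 2: the Lyapunov function and mostly contracting.} Proposition~\ref{prop:repeller-drift} supplies, for small $\gamma$, a Lyapunov function $V'$ for $P$ on $X$ with $V\leq V'\leq MV$, where $V=d(\cdot,E)^{-\gamma}$. Properness holds because $\{V\leq R\}=\{d(\cdot,E)\geq R^{-1/\gamma}\}$ is closed in $M$, hence compact, and lies in $X$. Stationary probabilities of $f|_X$ are exactly the $f$-stationary probabilities on $M$ with $\mu(E)=0$, after extending by zero. The quantity $\lambda(\mu)$ is computed from $Lf_\omega^n$ at points of the open set $X$, so it is the same for $f$ and for $f|_X$. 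Hence the hypothesis $\lambda(\mu)<0$ gives $\lambda(f|_X)<0$.

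The one subtle point is that $\lambda(f|_X)$ is a supremum over $\cI(f|_X)$. The paper defines mostly contracting as $\lambda(f)<0$, so I need this supremum to be strictly negative, not merely each $\lambda(\mu)<0$. I would use that $\cI(f|_X)$ is weak$^*$ compact by Proposition~\ref{prop:tightness}. I would also use that $\mu\mapsto\lambda(\mu)=\inf_n\frac1n\int\phi_n\,d\mu$ is an infimum of upper semicontinuous functionals: the $\phi_n$ are upper semicontinuous and bounded above. Hence $\lambda$ is upper semicontinuous on this compact set and attains its supremum, which is therefore negative.

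Theorem~\ref{thm:quasicompact} then gives quasi-compactness on $\Hol_{\alpha,\beta}(X)$ built from $V'$. If $f|_X$ is stationarily proximal, Theorem~\ref{thm:product-weighted-global-contraction} gives~\eqref{eq:product-global-contraction} and the spectral gap.

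\emph{Step 3: transfer to the explicit weights.} From $V\leq V'\leq MV$, the functions $\Delta_{\alpha,\beta}$ built from $V$ and from $V'$ differ by at most the factor $M^{2\beta}$. So the contraction inequality transfers with $C$ replaced by $M^{2\beta}C$, and the two Hölder norms are equivalent. Quasi-compactness and the spectral gap bound are both invariant under equivalent norms.

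With $V=d(\cdot,E)^{-\gamma}$, we have $\Delta_{\alpha,\beta}(x,y)=d(x,y)^\alpha d(x,E)^{-\gamma\beta}d(y,E)^{-\gamma\beta}$. Renaming $\gamma\beta$ as $\beta$ gives the displayed inequality. The constraints become $\beta<\gamma\varepsilon_0$ and $\alpha/\beta<\varepsilon_0/\gamma$, since $\alpha/(\gamma\beta)<\varepsilon_0$ is the original condition; this is again "$\beta$ and $\alpha/\beta$ sufficiently small".

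I expect no genuine obstacle. The only points needing care are the upper semicontinuity argument that turns pointwise negativity of $\lambda(\mu)$ into $\lambda(f|_X)<0$, and tracking the renaming of exponents.
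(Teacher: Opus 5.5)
Your proposal is correct and follows the paper's own argument: take the Lyapunov function $V'$ from Proposition~\ref{prop:repeller-drift}, apply Theorems~\ref{thm:quasicompact} and~\ref{thm:product-weighted-global-contraction} to $f|_X$, transfer through $V\leq V'\leq MV$, and rename $\gamma\beta$ as $\beta$. Your upper-semicontinuity argument that the supremum defining $\lambda(f|_X)$ is attained, and hence negative, fills in a step the paper leaves implicit. That step is harmless there anyway, since the proofs only use asymptotic negativity through Proposition~\ref{Kingman-noncompact}, which needs just $\inf_n\int\phi_n\,d\mu<0$ for each stationary $\mu$. One small slip: with $\beta$ the renamed exponent, the original condition reads $\gamma\alpha/\beta<\varepsilon_0$, not $\alpha/(\gamma\beta)<\varepsilon_0$; your stated conclusion $\alpha/\beta<\varepsilon_0/\gamma$ is nonetheless correct.
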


\section{Limit theorems}\label{sec:limit-theorems}
Throughout this section, $f$ satisfies the assumptions of
Theorem~\ref{thm:product-weighted-global-contraction}; in particular,
it has a unique stationary probability $\nu$. A \emph{one-step random
observable} is a real-valued function $u:\Omega\times X\to\R$ of
the form $u_\omega=u_{\omega_0}$. For a fixed $x\in X$, we study
the Birkhoff sums
\[
S_n=\sum_{k=0}^{n-1}u_{\sigma^k\omega}(f_\omega^k(x)).
\]
When the initial point varies, we write $S_n(\omega,x)$.

\begin{definition}\label{def:observable-nonarithmetic} Let $u$ be a one-step random observable. 
	\begin{enumerate}
		\item  \(u\) is \emph{degenerate} if, there are  $\alpha,\beta>0$,  
		\(g\in\Hol_{\alpha,\beta}(X)\) real-valued and $m\in \R$ such that
\begin{equation}\label{eq:observable-additive-coboundary}
			u_\omega=m+(g-g\circ f_\omega)
			\quad \Pp\times\nu\text{-a.e.}.
		\end{equation}
					It is \emph{non-degenerate} otherwise.
		\item  \(u\) is \emph{arithmetic} if,  there are 
		\(t\in\R\setminus\{0\}\), \(\lambda\in\C\) with \(|\lambda|=1\) and  $\alpha,\beta>0$,
		\(h\in\Hol_{\alpha,\beta}(X)\) such that \(|h|=1\) \(\nu\)-a.e. and
		\begin{equation}\label{eq:observable-circle-coboundary}
			e^{it u_\omega}=\lambda \frac{h}{h\circ f_\omega}
			\quad \Pp\times\nu\text{-a.e.}.
		\end{equation}
		It is \emph{non-arithmetic} otherwise.
	\end{enumerate}
\end{definition}	
If $u$ is degenerate, then it is arithmetic: take $h=e^{itg}$ for
any $t\neq0$ and $\lambda=e^{itm}$. Conversely, taking a measurable
argument in the arithmetic case gives a measurable function $g$,
defined $\nu$-almost everywhere, such that
$u_\omega+g\circ f_\omega-g\in a+b\mathbb Z$ for
$\Pp\times\nu$-almost every~$(\omega,x)$, for suitable $a\in\R$
and $b>0$.

For $\alpha>0$, write
\[
 [h]_\alpha\eqdef\sup_{x\neq y}\frac{|h(x)-h(y)|}{d(x,y)^\alpha},
 \qquad
 \|h\|_\alpha\eqdef\|h\|_\infty+[h]_\alpha.
\]
We denote by $C^\alpha(X;\C)$ the bounded $\alpha$-H\"older
functions with this norm. The goal of this section is to prove the following theorem:

\begin{theorem}\label{thm:abstract-limit-theorems}	
Under the standing assumptions on $f$, let $u:\Omega\times X\to\R$
be a random observable such that $u_\omega$ is bounded and H\"older
and, for some $\eta,\alpha_0>0$, satisfies
\begin{equation}\label{eq:observable-mixed-moment}
	\int (1+[u_\omega]_{\alpha_0})^2
	e^{\eta\|u_\omega\|_\infty}\,d\Pp<\infty.
\end{equation}
Fix $x_0\in X$ and set
$S_n(\omega)=\sum_{k=0}^{n-1}
u_{\sigma^k\omega}(f_\omega^k(x_0))$ and
$m=\int u(\omega,x)\,d\Pp\,d\nu$. Then
\begin{enumerate}[label=\textup{(\roman*)}]
\item \emph{Central limit theorem.}
\((S_n-nm)/\sqrt n\) converges in law to
\(\cN(0,\sigma^2)\) for some $\sigma\geq0$.
If $u$ is non-degenerate, then $\sigma>0$.
\item \emph{Berry--Esseen bound.} If \(u\) is non-degenerate, then 
\(d_{\mathrm K}\bigl((S_n-nm)/(\sigma\sqrt n),
\cN(0,1)\bigr)=O(n^{-1/2})\) as $n\to \infty$,
where $d_{\mathrm K}$ is the Kolmogorov distance.
\item \emph{Local central limit theorem.} If \(u\) is non-arithmetic,
then, for every
\(g\in C_c(\R)\),
\[
\sup_{a\in\R}\left|
\sigma\sqrt{2\pi n}\int g(S_n-nm-a)\,d\Pp
-e^{-a^2/(2n\sigma^2)}\int_\R g(s)\,ds\right|\longrightarrow0.
\]
\item \emph{Renewal theorem.} If \(m>0\) and
\(u\) is non-arithmetic, then, for
\(g\in C_c(\R)\),
\[
\sum_{n\geq1}\int g(S_n-a)\,d\Pp
\longrightarrow\frac1m\int_\R g(s)\,ds
\quad\text{as }a\to +\infty,
\]
and this sum tends to zero as \(a\to -\infty\).
\item \emph{Large deviations.} 
If \(u\) is non-degenerate, there are
\(\varepsilon_0>0\) and a strictly convex nonnegative function
\(c\) near zero, independent of $x_0$ and with \(c(0)=c'(0)=0\) and $c''(0)=1/\sigma^2$, such that for
\(0<\varepsilon<\varepsilon_0\),
\[
\qquad \quad \lim_{n\to \infty}\frac1n\log\Pp\bigl(
S_n-nm>n\varepsilon\bigr)=-c(\varepsilon),
\quad
\lim_{n\to \infty}\frac1n\log\Pp\bigl(
S_n-nm<-n\varepsilon\bigr)=-c(-\varepsilon).
\]
For every $u$, including the degenerate case, there are
$\varepsilon_1,h>0$ and $C_{x_0}>0$ such that, for all $n\geq1$ and
$0<\varepsilon<\varepsilon_1$,
\(\Pp(|S_n-nm|>n\varepsilon)
\leq C_{x_0}e^{-hn\varepsilon^2}\).
\end{enumerate}
\end{theorem}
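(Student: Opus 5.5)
We follow the Nagaev--Guivarc'h perturbation method for the twisted operators
\[
P_t\varphi(x)\eqdef\int e^{itu_\omega(x)}\varphi(f_\omega(x))\,d\Pp ,
\qquad t\in\C,\ |t|\ \text{small},
\]
acting on $\Hol_{\alpha,\beta}(X)$ with $\alpha\leq\alpha_0$ and $\alpha,\beta$ small. These operators encode the sums through $\int e^{itS_n}\,d\Pp=P_t^n\1(x_0)$. The first step is analytic regularity of $t\mapsto P_t$ as a map into the bounded operators on $\Hol_{\alpha,\beta}(X)$, for $|t|<\eta/2$ say. We expand $P_t=\sum_k (it)^kQ_k/k!$ with $Q_k\varphi=\int u_\omega^k\,\varphi\circ f_\omega\,d\Pp$. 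To bound $[Q_k\varphi]_{\alpha,\beta}$ we split $u^k(x)\varphi(f x)-u^k(y)\varphi(fy)$ in the usual way. This needs $|u^k(x)-u^k(y)|\le k\|u\|_\infty^{k-1}[u]_{\alpha_0}d(x,y)^{\alpha}$ (recall $d\le1$ and $V\geq1$) and $|\varphi(fx)|\le C\|\varphi\|V(fx)^\beta$. A Hölder split between $\|u_\omega\|_\infty^k$, $[u_\omega]_{\alpha_0}$, $\Lip(f_\omega)^\alpha$ and $V(f_\omega x)^\beta/V(x)^\beta$ then gives $\|Q_k\|\le C k!\,\eta'^{-k}$. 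This is exactly where the mixed moment \eqref{eq:observable-mixed-moment} is used, together with the fact that $PV/V$ is bounded and $\Lip(f)$ has a moment. The quantity $\beta$ must be taken small relative to the Hölder exponents so that all these splits close.

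Next comes the spectral picture. By Theorem~\ref{thm:product-weighted-global-contraction}, $P=P_0$ has a simple isolated eigenvalue $1$, with projector $\Pi\varphi=\int\varphi\,d\nu$, and the rest of its spectrum lies in a disk of radius $\rho<1$. Analytic perturbation theory then gives, for small $|t|$,
\[
P_t^n=\lambda(t)^n\Pi_t+N_t^n,\qquad \|N_t^n\|\le C\rho_1^n ,
\]
with $\lambda,\Pi_t$ analytic and $\lambda(0)=1$. Differentiating $\nu(P_t\Pi_t\1)$ gives $\lambda'(0)=im$. The second derivative gives $\lambda''(0)=-(m^2+\sigma^2)$, with $\sigma^2=\lim\frac1n\int(S_n-nm)^2$; it can be written via the Gordin solution $g=\sum_n P^n(\bar u-m)$, where $\bar u=\int u_\omega\,d\Pp$ and $g\in\Hol_{\alpha,\beta}$. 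The identity $\sigma^2=\int(u_\omega-m+g\circ f_\omega-g)^2\,d\Pp\,d\nu$ shows that $\sigma=0$ forces $u$ to be degenerate with this Hölder $g$. Since $\1$ and the evaluation $\varphi\mapsto\varphi(x_0)$ are bounded, (i) follows from $e^{-itnm/\sqrt n}P^n_{t/\sqrt n}\1(x_0)\to e^{-\sigma^2t^2/2}$. Item (ii) follows from Esseen's smoothing inequality, using the standard expansion $|\lambda(t)^ne^{-itnm}-e^{-n\sigma^2t^2/2}|\le C n|t|^3e^{-cnt^2}$ for $|t|\le\delta$. For (v) we take $t=-is$ real-imaginary: $\Lambda(s)=\log\lambda(-is)$ is analytic and strictly convex near $0$ when $\sigma>0$. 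The Gärtner--Ellis argument, or the standard exact asymptotics $\Pp(S_n-nm>n\varepsilon)$ obtained after an exponential change of measure, yields $c$ as the Legendre transform, which does not depend on $x_0$. The Hoeffding-type bound for every $u$ follows from $\int e^{s(S_n-nm)}\,d\Pp\le C_{x_0}e^{Cns^2}$ for $|s|\le s_1$, which comes from $|\lambda(-is)e^{-sm}|\le e^{Cs^2}$, together with Chebyshev's inequality with $s=\varepsilon/(2C)$. This argument does not need $\sigma>0$.

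For (iii) and (iv) we additionally need that, for real $t\neq0$, the spectral radius of $P_t$ is $<1$, uniformly on compacts of $\R\setminus\{0\}$. This is the step I expect to be the main obstacle. First, $P_t$ still satisfies the Lasota--Yorke inequality of Proposition~\ref{prop:LY}, since the extra factor only adds a $|\varphi|_\gamma$-term, so $r_{\mathrm{ess}}(P_t)<1$. Suppose $P_th=\lambda h$ with $|\lambda|=1$. Then $|h|\le P|h|$, and $|h|\in C_\gamma$. The plan is to show that $|h|$ is constant: $P^n|h|\to\int|h|\,d\nu$ pointwise by the spectral gap, after truncating $|h|$ and using the tightness and uniform integrability provided by $V$; this gives $|h|\le\int|h|\,d\nu$, and equality holds $\nu$-a.e. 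Let $D\subset X$ be the set where $|h|=\max$; it is closed and invariant, and by the contraction property $D$ contains $\supp\nu$. Equality in $|P_th|=|h|$ on $\supp\nu$ then forces $e^{itu_\omega}h\circ f_\omega=\lambda h$ for $\Pp\times\nu$-a.e. point. Normalising $|h|=1$ on $\supp\nu$, this says $u$ is arithmetic, a contradiction. Upper semicontinuity of the spectral radius then gives a uniform bound on compacts in $t$. With this in hand, (iii) follows from Stone's method: approximate $g$ by functions with compactly supported Fourier transforms, apply Fourier inversion, and treat the regions $|t|\le\delta$ (via $\lambda(t)$) and $\delta\le|t|\le R$ (via exponential decay). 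For (iv) we sum the resolvent $\sum_n P_t^n=(I-P_t)^{-1}$. Near $t=0$ it equals $\frac{\Pi_t}{1-\lambda(t)}$ plus an analytic part, with $1-\lambda(t)\sim -imt$. The standard renewal argument, as in Guivarc'h--Le Page or Babillot, then produces the limit $\frac1m\int g$ as $a\to+\infty$ and $0$ as $a\to-\infty$. Here $m>0$ and the exponential moment make the sum $\sum_n\Pp(|S_n-a|\le R)$ converge and keep $a\to-\infty$ negligible through the large deviation bound of (v).
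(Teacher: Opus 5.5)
Your proposal is correct and uses the same spectral ingredients as the paper: analyticity of the Fourier--Laplace family on $\Hol_{\alpha,\beta}(X)$, the spectral gap of $P$ from Theorem~\ref{thm:product-weighted-global-contraction}, and a twisted Lasota--Yorke inequality. It also uses essentially the same argument that a unimodular eigenvalue of $P_{it}$ forces $|h|$ to be $\nu$-a.e.\ constant and then, by equality in the triangle inequality, an arithmetic relation.

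The difference is in how the limit theorems are extracted. The paper does not run the Nagaev--Guivarc'h expansion by hand. It lifts to a chain on $T\times X$, adjoining a null symbol $e$ and the isometry $J\varphi(t,x)=\varphi(f_t(x))$, so that the $\omega$-dependent observable becomes a function of the state. It then quotes Hennion--Herv\'e Theorems A--E, including A(iii) for the zero-variance coboundary, which it transports back through $J$. You work directly on $X$, which is legitimate since $P_t^n\1(x_0)=\int e^{itS_n}\,d\Pp$ already. You prove that non-degeneracy implies $\sigma>0$ by an explicit Gordin decomposition with $g=\sum_n P^n(\bar u-m)\in\Hol_{\alpha,\beta}(X)$. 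This is more self-contained and makes the H\"older regularity of the coboundary transparent. The price is that you must redo three standard but technical arguments that the paper gets from HH01 once its hypotheses are checked:
\begin{itemize}
\item the local limit theorem (Stone's smoothing);
\item the renewal theorem (regularising $(I-P_z)^{-1}$ near $z=0$);
\item the exact local large-deviation asymptotics (exponential tilting).
\end{itemize}

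A few points to tighten:
\begin{itemize}
\item \emph{Continuity in $t$.} The upper-semicontinuity step needs norm continuity of $t\mapsto P_t$ on all of $\R$, not only analyticity for $|t|<\eta/2$. The paper expands around every $it_0$. Your bound applied to $u_\omega^k e^{it_0u_\omega}$, or a direct estimate of $\|e^{itu_\omega}-e^{isu_\omega}\|_\alpha$ via the mixed moment, gives this.
\item \emph{Bounding $\|Q_k\|$.} Since $u_\omega$ and $f_\omega$ depend on the same symbol $\omega_0$, the observable block built from $(1+[u_\omega]_{\alpha_0})$ and $\|u_\omega\|_\infty^k$ must be kept together. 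It should be paired by Cauchy--Schwarz against $\Lip(f_\omega)^\alpha R_\omega(x)^\beta R_\omega(y)^\beta$. A four-way H\"older split would require separate moments that the hypothesis does not provide.
\item \emph{No truncation for $|h|$.} None is needed, because $|h|\in\Hol_{\alpha,\beta}(X)$ and $P^n|h|\to\int|h|\,d\nu$ in norm.
\item \emph{Large deviations.} The global G\"artner--Ellis theorem does not apply, since the limiting log-Laplace transform is only known near $0$. The local change-of-measure argument you mention second is the one that works.
\end{itemize}
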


\subsection{Analyticity of the Fourier--Laplace operators} Define the Fourier--Laplace operator
\[
P_z\varphi(x)=\int e^{zu_\omega(x)}
\varphi(f_\omega(x))\,d\Pp.
\]

\begin{proposition}\label{prop:analyticity}
If $\alpha$ and $\beta$ are sufficiently small, the map \(z\mapsto P_z\)
is analytic on the strip
\(\{z\in\C:|\Ree z|<\eta/2\}\) into
\(\mathcal L(\Hol_{\alpha,\beta}(X))\).
\end{proposition}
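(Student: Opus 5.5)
We will prove analyticity by exhibiting $P_z$ as a convergent power series in $\mathcal L(\Hol_{\alpha,\beta}(X))$ around every point $z_0$ of the strip. Write
\[
P_z\varphi=\sum_{k\geq0}\frac{(z-z_0)^k}{k!}\,Q_k\varphi,
\qquad
Q_k\varphi(x)\eqdef\int u_\omega(x)^k e^{z_0u_\omega(x)}\varphi(f_\omega(x))\,d\Pp.
\]
It suffices to show that each $Q_k$ is bounded on $\Hol_{\alpha,\beta}(X)$ with
$\|Q_k\|\leq C\,k!\,\delta^{-k}$ for some $\delta>0$ depending only on $\eta/2-|\Ree z_0|$. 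The series then converges in operator norm for $|z-z_0|<\delta$. Interchanging sum and integral is justified pointwise by dominated convergence, using $|u_\omega|^k e^{|\Ree z_0||u_\omega|}\leq k!\,\delta^{-k}e^{(|\Ree z_0|+\delta)\|u_\omega\|_\infty}$.

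For the estimate on $Q_k$, put $w_\omega=u_\omega^ke^{z_0u_\omega}$, a bounded H\"older function on $X$. For $\alpha\leq\alpha_0$ (and $d\leq 1$), the mean value inequality gives
\[
\|w_\omega\|_\infty\leq k!\,\delta^{-k}e^{a\|u_\omega\|_\infty},
\qquad
[w_\omega]_{\alpha}\leq k!\,\delta^{-k}(1+|z_0|)[u_\omega]_{\alpha_0}e^{a\|u_\omega\|_\infty},
\]
where $a=|\Ree z_0|+2\delta<\eta/2$ for $\delta$ small. Here the factor $k\,|u|^{k-1}$ is absorbed by enlarging $\delta^{-k}$ slightly. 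For $x\neq y$ we split
\[
Q_k\varphi(x)-Q_k\varphi(y)=\int w_\omega(x)\bigl(\varphi(f_\omega x)-\varphi(f_\omega y)\bigr)\,d\Pp+\int\bigl(w_\omega(x)-w_\omega(y)\bigr)\varphi(f_\omega y)\,d\Pp .
\]

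The first term is bounded by
$[\varphi]_{\alpha,\beta}\int\|w_\omega\|_\infty\,\Delta_{\alpha,\beta}(f_\omega x,f_\omega y)\,d\Pp$. Using Cauchy--Schwarz, this is at most
\[
[\varphi]_{\alpha,\beta}\Bigl(\int\|w_\omega\|_\infty^2\,d\Pp\Bigr)^{1/2}\Bigl(\int\Delta_{\alpha,\beta}(f_\omega x,f_\omega y)^2\,d\Pp\Bigr)^{1/2}.
\]
The second factor is $\lesssim\Delta_{\alpha,\beta}(x,y)$ once $\alpha,\beta$ are small, by H\"older, the Lipschitz moment, and the bound $P V/V\leq C_V$, exactly as in the boundedness lemma for $P$.

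The second term is bounded by $[w_\omega]_\alpha\, d(x,y)^\alpha\,|\varphi|_\beta\, V(f_\omega y)^\beta$, using $|\varphi|_\beta\leq C\|\varphi\|_{\alpha,\beta}$ from Lemma~\ref{lem:compact-embedding}. Since $V\geq1$, we have $d(x,y)^\alpha\leq \Delta_{\alpha,\beta}(x,y)\,V(y)^{-\beta}$. Applying Cauchy--Schwarz again gives the bound
\[
\Bigl(\int[w_\omega]_\alpha^2\,d\Pp\Bigr)^{1/2}\,\bigl(PV(y)/V(y)\bigr)^{\beta}\,\Delta_{\alpha,\beta}(x,y),
\]
for $2\beta\leq1$ (Jensen). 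The value $|Q_k\varphi(x_0)|$ is handled the same way, using $|\varphi(f_\omega x_0)|\leq|\varphi|_\beta V(f_\omega x_0)^\beta$.

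The main obstacle is the moment bookkeeping. We need $\int e^{2a\|u_\omega\|_\infty}(1+[u_\omega]_{\alpha_0})^2\,d\Pp<\infty$, which holds because $2a<\eta$ and by \eqref{eq:observable-mixed-moment}. This is precisely why the strip has half-width $\eta/2$ and why the assumption carries the square. In the first term, the exponential of $\|u\|_\infty$ must be separated from the Lipschitz moment of $f_\omega$ without breaking integrability. If Cauchy--Schwarz against the fourth moments of $\Delta$ is too costly, we will instead use H\"older with exponents $(p,p')$, with $p$ close to $1$ on the $u$-factor, shrinking $\alpha,\beta$ so that $p'\alpha\leq\eta_0$ and $p'\beta\leq1$. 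This may force a slightly smaller strip unless $p=2$ works. Collecting the bounds gives $\|Q_k\|\leq C\,k!\,\delta^{-k}$ and hence analyticity.
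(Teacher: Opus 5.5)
Your proposal is correct and follows essentially the paper's argument: an operator-norm power-series expansion of $P_z$, the same two-term splitting (which is exactly Lemma~\ref{lem:weighted-observable-operator}), Cauchy--Schwarz, and the bound $t^k\leq k!\,s^{-k}e^{st}$, with the half-width $\eta/2$ coming from squaring the exponential moment. The only differences are cosmetic: the paper centers its disks at imaginary points $it_0$ with radii approaching $\eta/2$, whereas you center at arbitrary points of the strip. Your H\"older fallback is also unnecessary, since $p=2$ works: the second moment of $\Lip(f_\omega)^\alpha\bigl(V(f_\omega x)V(f_\omega y)/(V(x)V(y))\bigr)^\beta$ is uniformly bounded once $6\alpha\leq\eta_0$ and $6\beta\leq1$.
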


In order to prove this proposition we begin by the following lemma.
\begin{lemma}\label{lem:weighted-observable-operator}
Let \(h_\omega\in C^\alpha(X;\C)\) be a measurable one-step family
such that \(\int\|h_\omega\|_\alpha^2\,d\Pp<\infty\), and put
\(Q_h\varphi=\int h_\omega(\varphi\circ f_\omega)\,d\Pp\).
Then \(Q_h:\Hol_{\alpha,\beta}(X)\to\Hol_{\alpha,\beta}(X)\) is bounded, with
\[\|Q_h\|\leq C(\int\|h_\omega\|_\alpha^2\,d\Pp)^{1/2}.\]
\end{lemma}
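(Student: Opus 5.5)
The plan is to bound the two pieces of the norm $\|Q_h\varphi\|_{\alpha,\beta}=|Q_h\varphi(x_0)|+[Q_h\varphi]_{\alpha,\beta}$ separately. Write $H=(\int\|h_\omega\|_\alpha^2\,d\Pp)^{1/2}$. We assume $\alpha,\beta$ are small enough that the quantity
\[
A_2\eqdef\sup_{x\neq y}\int\Bigl(\frac{\Delta_{\alpha,\beta}(f_\omega(x),f_\omega(y))}{\Delta_{\alpha,\beta}(x,y)}\Bigr)^2\,d\Pp
\]
is finite. As in the boundedness lemma for $P$, this follows from H\"older's inequality, the moment~\eqref{eq:random-map-moment} (we need $4\alpha\leq\eta_0$) and the bound $PV/V\leq C_V$ (we need $4\beta\leq1$). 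Continuity of $Q_h\varphi$ is handled by the uniform-integrability argument already used for $P$ on $C_\gamma(X)$. The domination there comes from $\|h_\omega\|_\infty$ in $L^2$ together with $|\varphi|\leq C\|\varphi\|_{\alpha,\beta}V^\beta$ from Lemma~\ref{lem:compact-embedding}.

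\emph{Seminorm.} For $x\neq y$, split
\[
Q_h\varphi(x)-Q_h\varphi(y)=\int h_\omega(x)\bigl(\varphi(f_\omega x)-\varphi(f_\omega y)\bigr)\,d\Pp+\int\bigl(h_\omega(x)-h_\omega(y)\bigr)\varphi(f_\omega y)\,d\Pp.
\]
\begin{enumerate}
\item[(a)] The first term is at most $[\varphi]_{\alpha,\beta}\int\|h_\omega\|_\infty\Delta_{\alpha,\beta}(f_\omega x,f_\omega y)\,d\Pp$. By Cauchy--Schwarz this is at most $H A_2^{1/2}[\varphi]_{\alpha,\beta}\Delta_{\alpha,\beta}(x,y)$.
\item[(b)] For the second term, use $|\varphi(z)|\leq C\|\varphi\|_{\alpha,\beta}V(z)^\beta$. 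It is then at most $C\|\varphi\|_{\alpha,\beta}\,d(x,y)^\alpha\int[h_\omega]_\alpha V(f_\omega y)^\beta\,d\Pp$.
\item[(c)] Cauchy--Schwarz and Jensen bound this by $C\|\varphi\|_{\alpha,\beta}H\,d(x,y)^\alpha(P V(y))^{\beta}\leq C'\|\varphi\|_{\alpha,\beta}H\,d(x,y)^\alpha V(y)^\beta$, using $2\beta\leq1$ and $PV\leq C_VV$.
\item[(d)] Since $V\geq1$, we have $d(x,y)^\alpha V(y)^\beta\leq\Delta_{\alpha,\beta}(x,y)$. Hence (b) is also $\leq C''H\|\varphi\|_{\alpha,\beta}\Delta_{\alpha,\beta}(x,y)$.
\end{enumerate}
Together, (a)--(d) give $[Q_h\varphi]_{\alpha,\beta}\leq CH\|\varphi\|_{\alpha,\beta}$.

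\emph{Value at $x_0$.} We have $|Q_h\varphi(x_0)|\leq\int\|h_\omega\|_\infty|\varphi(f_\omega x_0)|\,d\Pp\leq C\|\varphi\|_{\alpha,\beta}H\,(PV(x_0))^{\beta}$, again by Cauchy--Schwarz and Jensen. This is $\leq C'H\|\varphi\|_{\alpha,\beta}$.

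The only delicate point is the mixed term (b). The H\"older increment of $h_\omega$ is paid by $d(x,y)^\alpha$ alone, so one needs the weight $V(f_\omega y)^\beta$ to be absorbed by $V(y)^\beta$, with square-integrability coming from $h$. That is exactly why the $L^2$ assumption on $\|h_\omega\|_\alpha$ and the smallness $2\beta\leq 1$ appear. Everything else reduces to the same H\"older/drift bookkeeping used for boundedness of $P$.
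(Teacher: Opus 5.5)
Your proposal is correct and is essentially the paper's proof: the same splitting into the $\varphi$-increment and the $h$-increment, Cauchy--Schwarz against $\|h_\omega\|_\alpha\in L^2(\Pp)$, and fixed-time second-moment bounds for $\Lip(f_\omega)^\alpha R_\omega(x)^\beta R_\omega(y)^\beta$ and $R_\omega(y)^\beta$, where $R_\omega=V\circ f_\omega/V$; the $x_0$ estimate also matches. The only slip is in the parenthetical thresholds. H\"older gives $A_2<\infty$ under a condition such as $2\alpha/\eta_0+4\beta\leq1$ (the paper uses $6\alpha\leq\eta_0$ and $6\beta<1$), not under $4\alpha\leq\eta_0$ and $4\beta\leq1$ alone, but this is harmless because $\alpha,\beta$ are taken small anyway.
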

\begin{proof}
For \(x\neq y\), write
\[
\begin{aligned}
&|h_\omega(x)\varphi(f_\omega(x))
   -h_\omega(y)\varphi(f_\omega(y))| \leq \\
&\quad\leq |h_\omega(x)|
 |\varphi(f_\omega(x))-\varphi(f_\omega(y))|
 +|\varphi(f_\omega(y))||h_\omega(x)-h_\omega(y)|.
\end{aligned}
\]
Put \(R_\omega(x)=V(f_\omega(x))/V(x)\). Dividing the first term
by \(\Delta_{\alpha,\beta}(x,y)\) bounds it by
\[
\|\varphi\|_{\alpha,\beta}\|h_\omega\|_\alpha
\Lip(f_\omega)^\alpha R_\omega(x)^\beta R_\omega(y)^\beta.
\]
For the second term, use
\(|h_\omega(x)-h_\omega(y)|
 \leq\|h_\omega\|_\alpha d(x,y)^\alpha\) and
\(|\varphi(f_\omega(y))|
 \leq C\|\varphi\|_{\alpha,\beta}V(f_\omega(y))^\beta\).
Since \(V(x)\geq1\), its quotient is at most
\(C\|\varphi\|_{\alpha,\beta}\|h_\omega\|_\alpha R_\omega(y)^\beta\).
Integrating in \(\omega\), Cauchy--Schwarz and the fixed-time moment
bounds from the proof of Proposition~\ref{prop:contractions}\textup{(i)}
give
\[
[Q_h\varphi]_{\alpha,\beta}
\leq C'\|\varphi\|_{\alpha,\beta}
\left(\int\|h_\omega\|_\alpha^2\,d\Pp\right)^{1/2}.
\]
Indeed, the required uniform second moments of
\(\Lip(f_\omega)^\alpha R_\omega(x)^\beta R_\omega(y)^\beta\)
and \(R_\omega(y)^\beta\) follow from
\(6\alpha\leq\eta_0\), \(6\beta<1\), and
\(\sup_{x\in X} PV/V<\infty\).
The estimate at \(x_0\) follows in the same way from
\(|\varphi(f_\omega(x_0))|
 \leq C\|\varphi\|_{\alpha,\beta}V(f_\omega(x_0))^\beta\)
and \(2\beta\leq1\).
\end{proof}

\begin{proof}[Proof of Proposition~\ref{prop:analyticity}]
Fix an imaginary number \(z_0=it_0\). We prove that near \(z_0\),
\(P_z\) has the analytic expansion
\(\sum_{k\geq0}(z-z_0)^kP_k/k!\), where
\[
P_k\varphi=\int u_\omega^k e^{it_0u_\omega}
(\varphi\circ f_\omega)\,d\Pp.
\]
Put \(U_\omega=\|u_\omega\|_\infty\) and
\(H_\omega=[u_\omega]_\alpha\). Choose \(\alpha\leq\alpha_0\); since \(d\leq1\), one has \(H_\omega\leq[u_\omega]_{\alpha_0}\). The lemma gives
\(\|P_k\|\leq C(\int
\|u_\omega^k e^{it_0u_\omega}\|_\alpha^2\,d\Pp)^{1/2}\).
The product rule and \([e^{it_0u_\omega}]_\alpha
\leq |t_0|H_\omega\) give
\[
\|u_\omega^k e^{it_0u_\omega}\|_\alpha
\leq U_\omega^k+
\bigl(kU_\omega^{k-1}+|t_0|U_\omega^k\bigr)H_\omega,
\]
where the term with \(kU_\omega^{k-1}\) is omitted for \(k=0\).
Choose \(0<r<s<\eta/2\). Applying \(t^j\leq j!e^t\)
with \(t=sU_\omega\) and \(j=k,k-1\) gives
\[
\|u_\omega^k e^{it_0u_\omega}\|_\alpha
\leq \frac{k!}{s^k}e^{sU_\omega}
\bigl(1+(s+|t_0|)H_\omega\bigr).
\]
Consequently, by~\eqref{eq:observable-mixed-moment},
\[
\frac{\|P_k\|}{k!}\leq C_s s^{-k},\qquad
C_s=C\left(\int e^{2sU_\omega}
(1+(s+|t_0|)H_\omega)^2\,d\Pp\right)^{1/2}<\infty.
\]
Thus \(\sum_{k\geq0}r^k\|P_k\|/k!<\infty\), and the series
converges in operator norm for \(|z-it_0|<r\).
For every \(x\) and \(\varphi\), the same bounds, together with
\(|\varphi(f_\omega(x))|
 \leq C\|\varphi\|_{\alpha,\beta}V(f_\omega(x))^\beta\),
allow us to interchange sum and integral. Its sum is \(P_z\).
As \(t_0\) is arbitrary and \(r\) can be taken as close to
\(\eta/2\) as desired, these disks cover the stated strip.
\end{proof}

\subsection{Essential spectral radius and arithmeticity}
\begin{proposition}\label{prop:twisted-essential-radius}
If $\alpha/\beta$ and $\beta$ are sufficiently small, then
$r_{\mathrm{ess}}(P_{it})<1$ and $r(P_{it})\leq1$ for every $t\in\R$. Moreover, if $u$ is
non-arithmetic, then $r(P_{it})<1$ for every $t\neq0$.
\end{proposition}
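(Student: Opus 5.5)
We will run, for the twisted operator $P_{it}$, the same Lasota--Yorke and Ionescu-Tulcea--Marinescu--Hennion argument used for Theorem~\ref{thm:quasicompact}. Write
\[
P_{it}^N\varphi(x)=\int e^{itS_N(\omega,x)}\varphi(f_\omega^N(x))\,d\Pp .
\]
For $x\neq y$ we split
\begin{align*}
|P_{it}^N\varphi(x)-P_{it}^N\varphi(y)|
&\leq\int|\varphi(f^N_\omega x)-\varphi(f^N_\omega y)|\,d\Pp\\
&\quad+\int|e^{itS_N(\omega,x)}-e^{itS_N(\omega,y)}|\,|\varphi(f^N_\omega y)|\,d\Pp.
\end{align*}
The first term is at most $[\varphi]_{\alpha,\beta}\int\Delta_{\alpha,\beta}(f^N_\omega x,f^N_\omega y)\,d\Pp$. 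By Theorem~\ref{thm:product-weighted-global-contraction}, this is bounded by $C\rho^N[\varphi]_{\alpha,\beta}\Delta_{\alpha,\beta}(x,y)$. Alternatively, Lemma~\ref{cor:one-block} together with the compact-set argument of Proposition~\ref{prop:LY} gives the same kind of bound without using proximality.

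For the second term we use $|e^{ia}-e^{ib}|\leq 2^{1-\alpha}|t|^\alpha|a-b|^\alpha$ and $|\varphi(f^N_\omega y)|\leq|\varphi|_\gamma V(f^N_\omega y)^\gamma$. We also use
\[
|S_N(\omega,x)-S_N(\omega,y)|\leq\sum_{k<N}[u_{\sigma^k\omega}]_{\alpha_0}\,d(f^k_\omega x,f^k_\omega y)^{\alpha_0}.
\]
Combining these with Hölder, the moment hypothesis \eqref{eq:observable-mixed-moment} and the moments of $\Lip(f_\omega)$, the second term is bounded by
\[
C_{N,t}\,|\varphi|_\gamma\,d(x,y)^{\alpha}V(x)^\gamma\cdots,
\]
taking $\alpha\leq\alpha_0$ small. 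Since $V\geq1$ and we only need a bound of the form $C|\varphi|_{\gamma}\Delta_{\alpha,\beta}(x,y)$, we handle the weight as follows. We choose $\gamma$ with $\beta<\gamma$, but we measure $\varphi$ in the weak norm $|\cdot|_{\gamma}$ only on the compact part. Off a compact set we absorb the term into $[\varphi]_{\alpha,\beta}$ via $|\varphi(z)|\le |\varphi(x_0)|+V(x_0)^\beta V(z)^\beta[\varphi]_{\alpha,\beta}$ and the decay at infinity from Proposition~\ref{prop:contractions}(iii). This weight bookkeeping is where care is needed; we mimic the split $(x,y)\notin K\times K$ versus $\mathcal C$ used in Proposition~\ref{prop:LY}.

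The result is the inequality
\[
[P_{it}^N\varphi]_{\alpha,\beta}\leq\rho'[\varphi]_{\alpha,\beta}+C_t|\varphi|_\gamma,
\]
with $\rho'<1$. We also have $|P_{it}^n\varphi|_\gamma\leq |P^n|\varphi||_\gamma\leq C_V^\gamma|\varphi|_\gamma$. Lemma~\ref{lem:compact-embedding} and Hennion's theorem then give $r_{\mathrm{ess}}(P_{it})<1$.

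The bound $r(P_{it})\leq1$ follows once we show that any eigenvalue $\lambda$ with $|\lambda|\geq1$ satisfies $|\lambda|\le1$. This is because $|P_{it}^n\varphi|\leq P^n|\varphi|$, and $P^n|\varphi|$ stays bounded in $C_\gamma$. Hence an eigenfunction $h$ with $P_{it}h=\lambda h$ forces $|\lambda|^n|h|\leq P^n|h|$, which is bounded, so $|\lambda|\leq1$. Quasi-compactness then bounds the whole spectral radius by $1$.

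Now let $t\neq0$ and suppose $r(P_{it})=1$. By quasi-compactness there is $h\in\Hol_{\alpha,\beta}(X)$, $h\neq0$, with $P_{it}h=\lambda h$ and $|\lambda|=1$. From $|h|\leq P|h|$ and the spectral gap of $P$, applied to $|h|$ (which lies in $\Hol_{\alpha,\beta}$), we get
\[
|h|\leq P^n|h|\to\int|h|\,d\nu .
\]
Integrating against $\nu$ gives $|h|=c$ $\nu$-a.e. By continuity this holds on $\supp\nu$, and after normalizing we take $c=1$.

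It remains to turn $\lambda h(x)=\int e^{itu_\omega(x)}h(f_\omega x)\,d\Pp$ into an identity for each $\omega$. Here the right-hand side is an average of unimodular numbers and the left-hand side has modulus $1$. Strict convexity of the disc therefore forces
\[
e^{itu_\omega(x)}h(f_\omega x)=\lambda h(x)
\]
for $\Pp$-a.e. $\omega$, for $\nu$-a.e. $x$ (using that $\nu$ is stationary, so $f_\omega x\in\supp\nu$). This is exactly \eqref{eq:observable-circle-coboundary} with $h$ replaced by $1/h=\bar h$ up to conjugation, which contradicts non-arithmeticity.

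The main obstacle is the weighted Lasota--Yorke inequality for the twisted term, because the exponential moment of $u$ must be combined with the $V$-weights.
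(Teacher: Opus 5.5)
Your architecture matches the paper's: a twisted Lasota--Yorke inequality, then Ionescu-Tulcea--Marinescu--Hennion, then a unimodular eigenfunction giving \eqref{eq:observable-circle-coboundary}. Your eigenvalue part is fine. Getting $|h|\le\int|h|\,d\nu$ everywhere from $|h|\le P^n|h|\to\int|h|\,d\nu$ even avoids ergodicity and gives $c\neq0$ at once; the final identity is \eqref{eq:observable-circle-coboundary} with the same $h$, no conjugation needed.

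The gap is the step you flag: the twisted Lasota--Yorke inequality is asserted, not proved, and the tool you cite does not give it. With your split $(e^{itS_N(\omega,x)}-e^{itS_N(\omega,y)})\varphi(f^N_\omega y)$ and $|\varphi(z)|\le|\varphi|_\gamma V(z)^\gamma$, your bound on the quotient by $\Delta_{\alpha,\beta}(x,y)$ is of order $|t|\,|\varphi|_\gamma V(y)^{\gamma-\beta}V(x)^{-\beta}$. This is unbounded, since compactness of the embedding forces $\gamma>\beta$. (Also, to get $d(x,y)^\alpha$ use $|e^{ia}-e^{ib}|\le|a-b|$, not its $\alpha$-th power, which only yields $d(x,y)^{\alpha\alpha_0}$.) Off $K\times K$, your substitute $|\varphi(z)|\le|\varphi(x_0)|+V(x_0)^\beta V(z)^\beta[\varphi]_{\alpha,\beta}$ puts the following coefficient in front of $[\varphi]_{\alpha,\beta}$:
\[
 |t|\,V(x_0)^\beta\,V(x)^{-\beta}\int[S_N(\omega,\cdot)]_\alpha\Bigl(\frac{V(f^N_\omega y)}{V(y)}\Bigr)^{\beta}d\Pp .
\]
It must be made smaller than $1-C\rho^N$. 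Proposition~\ref{prop:contractions}(iii) does not control it. That result concerns $\Delta_{\alpha,\beta}(f^N_\omega x,f^N_\omega y)/\Delta_{\alpha,\beta}(x,y)$, which lacks the factor $[S_N]_\alpha$, and this factor grows with $N$. Moreover, when $V(x)$ stays bounded while $V(y)\to\infty$, the drift bound $P^NV\le q_V^NV+C_V$ only makes the coefficient of order $|t|\,V(x_0)^\beta N\Lambda^{\alpha N}q_V^{\beta N}$, with $\Lambda=\max\{1,(\int\Lip(f_\omega)^{\eta_0}\,d\Pp)^{1/\eta_0}\}$. That is not small at a fixed $N$.

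Your route therefore closes only after an extra argument:
\begin{enumerate}
\item use $\alpha/\beta$ small to get $\Lambda^\alpha q_V^\beta<1$;
\item choose $N=N(t)$ large, so that the above quantity is small and $C\rho^N$ is small;
\item take $K=\{V\le R\}$ with $R=R(N,t)$ large, to handle pairs with $V(x)>R$.
\end{enumerate}
On $K\times K$ your $|\cdot|_\gamma$ bound is then fine.

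The paper avoids all this with the inequality $|e^{i\theta_1}z_1-e^{i\theta_2}z_2|\le|z_1-z_2|+|\theta_1-\theta_2|\,|z_1z_2|^{1/2}$. The twisted term then carries $|\varphi(f^n_\omega x)\varphi(f^n_\omega y)|^{1/2}\le|\varphi|_{2\beta}V(f^n_\omega x)^\beta V(f^n_\omega y)^\beta$, which matches the weights of $\Delta_{\alpha,\beta}$ exactly. This gives $[P^n_{it}\varphi]_{\alpha,\beta}\le C\rho^n[\varphi]_{\alpha,\beta}+C_{t,n}|\varphi|_{2\beta}$ for all pairs, with no compact/noncompact split. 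Meanwhile $C_{2\beta}(X)$ still embeds compactly, since $2\beta>\beta$.
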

\begin{proof}

Fix $\alpha,\beta>0$ and $x,y\in X$. For $n\geq0$,
\[
P_{it}^n\varphi(x)-P_{it}^n\varphi(y)
=\int\bigl(e^{itS_n(\omega,x)}\varphi(f_\omega^n(x))
-e^{itS_n(\omega,y)}\varphi(f_\omega^n(y))\bigr)\,d\Pp.
\]

The elementary inequality
$|e^{i\theta_1}z_1-e^{i\theta_2}z_2|
\leq|z_1-z_2|+|\theta_1-\theta_2||z_1z_2|^{1/2}$
gives $|P_{it}^n\varphi(x)-P_{it}^n\varphi(y)|\leq A_n+B_n$,
where
\[
\begin{aligned}
A_n&=\int|\varphi(f_\omega^n(x))-\varphi(f_\omega^n(y))|\,d\Pp,\\
B_n&=|t|\int|S_n(\omega,x)-S_n(\omega,y)|
|\varphi(f_\omega^n(x))\varphi(f_\omega^n(y))|^{1/2}\,d\Pp.
\end{aligned}
\]

For sufficiently small $\beta$ and $\alpha/\beta$,
Theorem~\ref{thm:product-weighted-global-contraction} gives
$A_n\leq C\rho^n[\varphi]_{\alpha,\beta}
\Delta_{\alpha,\beta}(x,y)$. Set
$R_{n,\omega}(x)=V(f_\omega^n(x))/V(x)$. If $4\beta\leq1$,
 \begin{align*}
	B_n&\leq |t|\,|\varphi|_{2\beta}\Delta_{\alpha,\beta}(x,y)
	\int [S_n(\omega,\cdot)]_\alpha
	R_{n,\omega}(x)^\beta R_{n,\omega}(y)^\beta\,d\Pp \leq C_{t,n}|\varphi|_{2\beta}\Delta_{\alpha,\beta}(x,y),
	\end{align*}
where $C_{t,n}=|t|C_V^2
\bigl(\int [S_n(\omega,\cdot)]_\alpha^2\,d\Pp\bigr)^{1/2}$
is finite. Indeed,
\[
[S_n(\omega,\cdot)]_\alpha
\leq\sum_{k=0}^{n-1}[u_{\sigma^k\omega}]_\alpha
\Lip(f_\omega^k)^\alpha.
\]
For each $k$, the two factors are independent;
$\int [u_{\sigma^k\omega}]_\alpha^2\,d\Pp<\infty$
by~\eqref{eq:observable-mixed-moment}, and
$\int\Lip(f_\omega^k)^{2\alpha}\,d\Pp<\infty$
when $2\alpha<\eta_0$. Consequently,
\begin{equation}\label{eq:twisted-LY}
[P_{it}^n\varphi]_{\alpha,\beta}
\leq C\rho^n[\varphi]_{\alpha,\beta}
+C_{t,n}|\varphi|_{2\beta}.
\end{equation}
Fixing $N$ with $C\rho^N\leq1/2$, we have a Lasota--Yorke inequality $[P_{it}^{N}\varphi]_{\alpha,\beta}\leq 
 \frac{1}{2}[\varphi]_{\alpha,\beta}
+C'|\varphi|_{2\beta}$. By the drift estimate and $2\beta\leq1$, the weaker norms are uniformly bounded: $|P_{it}^n\varphi|_{2\beta}\leq |P^n(|\varphi|)|_{2\beta}\leq C_V^{2\beta}|\varphi|_{2\beta}$. The embedding $\Hol_{\alpha,\beta}(X)\hookrightarrow C_{2\beta}(X)$ is compact. Then the Ionescu--Tulcea--Marinescu theorem gives $r_{\mathrm{ess}}(P_{it})<1$ and $r(P_{it})\leq1$; in particular, if $r(P_{it})=1$, it has an eigenvalue $\lambda$ with modulus $1$.

In this case, there is $h\in\Hol_{\alpha,\beta}(X)$ with
$P_{it}h=\lambda h$. Thus $P|h|\geq|h|$. These functions have the
same integral against $\nu$, so they agree $\nu$-almost everywhere.
Ergodicity of $\nu$ then makes $|h|$ constant $\nu$-almost everywhere.
This constant cannot be zero: otherwise,
$|h|\leq P^n|h|\to\int|h|\,d\nu=0$ in the strong norm, contradicting
$h\neq0$. Rescale $h$ so that $|h|=1$ $\nu$-almost everywhere.
Equality in the triangle inequality for $P_{it}h=\lambda h$ now gives
$e^{itu_\omega}h\circ f_\omega=\lambda h$ for
$\Pp\times\nu$-almost every~$(\omega,x)$. Hence $u$ is arithmetic.
\end{proof}

The following proposition shows that the measurable non-arithmeticity
condition used in the introduction for matrix products agrees with
Definition~\ref{def:observable-nonarithmetic}.
\begin{proposition}\label{prop:measurable-arithmeticity}
Under the assumptions of Theorem~\ref{thm:abstract-limit-theorems},
$u$ is arithmetic if and only if there exist $a\in\R$, $b>0$, and a
measurable real-valued function $g$, defined $\nu$-almost everywhere,
such that
\[
u_\omega(x)+g(f_\omega(x))-g(x)\in a+b\mathbb Z
\quad\text{for $\Pp\times\nu$-almost every~$(\omega,x)$.}
\]
\end{proposition}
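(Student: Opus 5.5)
The forward direction is the elementary part; the converse needs a regularity upgrade.

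\emph{Arithmetic implies the measurable relation.} Assume \eqref{eq:observable-circle-coboundary} holds with $t\neq0$, $\lambda=e^{i\theta}$ and $|h|=1$ $\nu$-a.e. Choose a measurable argument $h=e^{it g}$, defined $\nu$-a.e. with values in $[0,2\pi/|t|)$. Taking logarithms gives
\[
t\bigl(u_\omega+g\circ f_\omega-g\bigr)\in\theta+2\pi\mathbb Z
\qquad \Pp\times\nu\text{-a.e.}
\]
Here we use that $f_\omega$ maps $\nu$-full sets to $\nu$-full sets for a.e.\ $\omega$, which follows from stationarity. The relation of the statement then holds with $a=\theta/t$ and $b=2\pi/|t|$.

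\emph{The measurable relation implies arithmetic.} Set $t=2\pi/b$, $\lambda=e^{ita}$ and $h_0=e^{itg}$. Then $e^{itu_\omega}h_0\circ f_\omega=\lambda h_0$ holds $\Pp\times\nu$-a.e., and $h_0$ is only measurable. Integrating in $\omega$ gives $P_{it}h_0=\lambda h_0$ $\nu$-a.e., with $|h_0|=1$. So $\lambda$ is a unimodular eigenvalue of $P_{it}$ acting on $L^2(\nu)$, or on $L^\infty(\nu)$. The task is to upgrade this to an eigenfunction in $\Hol_{\alpha,\beta}(X)$.

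The plan for the upgrade is as follows. By Proposition~\ref{prop:twisted-essential-radius}, $P_{it}$ is quasi-compact on $\Hol_{\alpha,\beta}(X)$ with $r(P_{it})\leq1$. Consider the averages
\[
Q_n=\frac1n\sum_{k=0}^{n-1}\lambda^{-k}P_{it}^k.
\]
These converge in operator norm on $\Hol_{\alpha,\beta}$ to the spectral projector $\Pi_\lambda$ onto $\ker(P_{it}-\lambda)$, which is $0$ if $\lambda$ is not an eigenvalue. Pick $\varphi\in\Hol_{\alpha,\beta}$, for instance bounded Lipschitz with compact support, such that $\langle\varphi,h_0\rangle_{L^2(\nu)}\neq0$. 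Such a $\varphi$ exists because these functions are dense in $L^2(\nu)$. Use the adjoint relation: since $|e^{itu}|=1$ and $P_{it}h_0=\lambda h_0$ with $|h_0|=1$, equality in Jensen gives $P_{it}^*h_0=\bar\lambda h_0$ in $L^2(\nu)$. Hence $\langle P_{it}^k\varphi,h_0\rangle=\lambda^k\langle\varphi,h_0\rangle$, and therefore $\langle Q_n\varphi,h_0\rangle=\langle\varphi,h_0\rangle\neq0$ for every $n$.

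Norm convergence in $\Hol_{\alpha,\beta}$ implies convergence in $C_{2\beta}$. Since $\int V^{4\beta}\,d\nu<\infty$ for $4\beta\leq1$, this also gives convergence in $L^2(\nu)$. So $\Pi_\lambda\varphi\neq0$, and $h=\Pi_\lambda\varphi\in\Hol_{\alpha,\beta}$ satisfies $P_{it}h=\lambda h$. The second half of the proof of Proposition~\ref{prop:twisted-essential-radius} then shows that, after rescaling, $|h|=1$ $\nu$-a.e. and $e^{itu_\omega}h\circ f_\omega=\lambda h$ $\Pp\times\nu$-a.e. Thus $u$ is arithmetic, with $t=2\pi/b\neq0$.

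The main obstacle is the adjoint identity $P_{it}^*h_0=\bar\lambda h_0$. The planned argument is a direct computation. Write $\int P_{it}\psi\cdot\overline{h_0}\,d\nu$, substitute $\overline{h_0}=\bar\lambda^{-1}e^{-itu_\omega}\overline{h_0\circ f_\omega}$ pointwise $\Pp\times\nu$-a.e., and use stationarity of $\nu$. This gives
\[
\int P_{it}\psi\,\overline{h_0}\,d\nu=\lambda\int\psi\,\overline{h_0}\,d\nu.
\]
This identity, with the convention $\langle\psi,h_0\rangle=\int\psi\,\overline{h_0}\,d\nu$, is exactly what the averaging step needs.
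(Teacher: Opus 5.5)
Your proof is correct. The forward direction is the paper's; the converse takes a different route.

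The paper argues by contradiction. If $u$ were non-arithmetic, Proposition~\ref{prop:twisted-essential-radius} would give $r(P_{it}|\Hol_{\alpha,\beta}(X))<1$, so $P_{it}^n\to0$ on the dense, continuously embedded subspace $\Hol_{\alpha,\beta}(X)\subset L^2(\nu)$. Since $P_{it}$ is an $L^2(\nu)$-contraction, $P_{it}^n\to0$ strongly on all of $L^2(\nu)$, which contradicts $P_{it}^nh_0=\lambda^nh_0$ with $\|h_0\|_{L^2(\nu)}=1$. You instead construct the H\"older eigenfunction as $\Pi_\lambda\varphi$, and show it is nonzero through the pairing $\langle Q_n\varphi,h_0\rangle=\langle\varphi,h_0\rangle$.

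The paper's route is shorter. It needs only $r(P_{it})<1$ and contractivity on $L^2(\nu)$, with no adjoint identity and no information about the peripheral spectrum. Yours yields a bit more: $e^{ita}$ itself is an eigenvalue of $P_{it}$ on $\Hol_{\alpha,\beta}(X)$. Unwinding the paper's argument only gives $r(P_{it})=1$ for the same $t$, hence some unimodular H\"older eigenvalue.

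The price is that operator-norm convergence of $Q_n$ to $\Pi_\lambda$ needs the unimodular eigenvalues to have no Jordan blocks, i.e.\ $\sup_n\|P_{it}^n\|_{\alpha,\beta}<\infty$. You should state this explicitly. It follows from iterating the twisted Lasota--Yorke inequality together with the uniform bound $|P_{it}^n\varphi|_{2\beta}\leq C_V^{2\beta}|\varphi|_{2\beta}$, using the equivalence of $\|\cdot\|_{\alpha,\beta}$ with $|\cdot|_{2\beta}+[\cdot]_{\alpha,\beta}$.

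Your adjoint identity is fine. It is the general fact that a unimodular eigenvector of a Hilbert-space contraction is an eigenvector of the adjoint for the conjugate eigenvalue.

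One wording fix: stationarity does not say that $f_\omega$ maps $\nu$-full sets to $\nu$-full sets. It says that $f_\omega^{-1}(B)$ is $\nu$-full for $\Pp$-a.e.\ $\omega$ whenever $B$ is a fixed $\nu$-full set. That is exactly what makes $g\circ f_\omega$ and $h_0\circ f_\omega$ defined $\Pp\times\nu$-a.e.
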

\begin{proof}
If $u$ is arithmetic, take a measurable argument of the function $h$
in~\eqref{eq:observable-circle-coboundary}. Dividing it by $t$ gives
$g$ with the stated property, with $b=2\pi/|t|$ and an appropriate $a$.
Conversely, given $a,b,g$ as above, set $t=2\pi/b$,
$h=e^{it g}$, and $\zeta=e^{it a}$. Then
$e^{it u_\omega}h\circ f_\omega=\zeta h$ almost everywhere, so
$P_{it}h=\zeta h$ in $L^2(\nu)$.

Suppose that $u$ were non-arithmetic. For admissible $\alpha,\beta$
with $2\beta\leq1$, Proposition~\ref{prop:twisted-essential-radius}
gives $r(P_{it}|\Hol_{\alpha,\beta}(X))<1$.
The space $\Hol_{\alpha,\beta}(X)$ embeds continuously and densely
into $L^2(\nu)$: the first assertion follows from
$\int V\,d\nu<\infty$, and the second from the density of bounded
Lipschitz functions. By stationarity, $P_{it}$ is a contraction on
$L^2(\nu)$. Its powers therefore converge to zero on that space,
first on the dense H\"older subspace and then everywhere. This
contradicts $P_{it}^nh=\zeta^nh$. Thus $u$ is arithmetic.
\end{proof}

\subsection{Proof of Theorem~\ref{thm:abstract-limit-theorems}}
We apply Hennion--Herv\'e~\cite[Thms.~A--E]{HH01}. Since $u$ may be random, we use
a Markov chain whose state includes
the current random symbol. Add a symbol \(e\) of probability zero to
\(T\), with \(f_e=\mathrm{id}_X\) and \(u_e=0\), and put
\(\mathcal E=T\times X\),
\(\bar f_s(t,x)=(s,f_t(x))\), and \(v(t,x)=u_t(x)\).
For \(\varphi\in\Hol_{\alpha,\beta}(X)\), set
\(J\varphi(t,x)=\varphi(f_t(x))\); the value at \((e,x)\) shows that
\(J\) is an isometry when its image is equipped with the transported
norm. Define the Fourier--Laplace kernel of the extended chain using the
observable at the arrival state:
\[
 Q_z\psi(t,x)=\int e^{zu_s(f_t(x))}\psi(s,f_t(x))\,dp(s).
\]
Then
\[
Q_zJ\varphi=J(P_z\varphi),\qquad
Q_z^n\1(e,x_0)=P_z^n\1(x_0)
=\int e^{zS_n(\omega)}\,d\Pp.
\]
The extended chain has stationary probability \(p\times\nu\), whose
functional on \(\mathcal{B}=J(\Hol_{\alpha,\beta}(X))\) is continuous because
\(\int V^\beta\,d\nu<\infty\). This Banach space contains the
constants, is closed under complex conjugation and modulus, and the
product of two bounded elements belongs to it. Point evaluations are
continuous. Theorem~\ref{thm:product-weighted-global-contraction}
gives the spectral hypotheses of~\cite{HH01}, while
Proposition~\ref{prop:analyticity} gives its Fourier differentiability
and the required Fourier and Laplace kernels on the whole real axis
and a complex strip. Thus Theorems A and B give (i)--(ii).

By~\cite[Thm.~A\textup{(iii)}]{HH01}, the equality
\(\sigma^2=0\) gives an additive coboundary in the Banach space of the
extended chain. Transferring it through the isometry \(J\) yields
\eqref{eq:observable-additive-coboundary}; hence non-degeneracy implies
\(\sigma>0\). In views of Proposition~\ref{prop:twisted-essential-radius}, under our assumption of  non-arithmeticity,~\cite[Thm.~C]{HH01}  gives (iii),
and \cite[Thm.~D]{HH01} concludes (iv). Finally, \cite[Thm.~E]{HH01}, applied to $u-m$ and
$m-u$, gives the limits in (v). Since \cite[Thm.~E]{HH01} uses
the same rate function for every admissible initial distribution,
$c$ does not depend on $x_0$. Moreover,  the conditions $c(0)=c'(0)=0$, and $c''(0)=1/{\sigma^2}$ follows~\cite[Prop.~VIII.3]{HH01}.

The upper bound holds also when $\sigma=0$. Indeed, for real $t$ near
zero, the simple leading eigenvalue $\lambda(t)$ of $P_t$ satisfies
$\lambda(0)=1$ and $(\log\lambda)'(0)=m$. Analyticity and the uniform
spectral decomposition therefore give $K,t_0>0$ and $C_{x_0}<\infty$
such that, for $n\geq1$ and $0\leq t\leq t_0$,
\[
\int e^{\pm t(S_n-nm)}\,d\Pp
=e^{\mp nmt}P_{\pm t}^n\1(x_0)
\leq C_{x_0}e^{Knt^2}.
\]
Thus, choosing $t=\varepsilon/(2K)$ and decreasing $\varepsilon_1$
if necessary, Chernoff's bound gives
\[
\Pp(|S_n-nm|>n\varepsilon)
\leq 2C_{x_0}e^{-n(t\varepsilon-Kt^2)}
=2C_{x_0}e^{-n\varepsilon^2/(4K)}.
\]

\subsection{Almost sure invariance principle}\label{ss:asip} 
A real-valued process $(S_n)_{n\geq1}$ satisfies an \emph{almost sure invariance principle} (ASIP) with mean $m$, variance $\sigma^2$, and error $O(a_n)$ if, without changing its joint law, it can be realized together with independent $G_k\sim\cN(0,\sigma^2)$ so that
\[
        S_n-nm-\sum_{k=1}^nG_k=O(a_n)\qquad\text{a.s.}
\]
The meaning of an $o(a_n)$ error is analogous.

\begin{theorem}
\label{cor:ASIP-global-contraction}
Under the assumptions of
Theorem~\ref{thm:product-weighted-global-contraction}, fix admissible
$\alpha,\beta>0$, and let $\nu$ be the unique stationary probability.
Let $u$ be a real-valued one-step observable such that, 
\begin{equation}\label{eq:asip-final-observable-regularity}
 \int |u_\omega(x)-u_\omega(y)|\,d\Pp
 \leq L\Delta_{\alpha,\beta}(x,y)
 \qquad \text{for $x,y\in X$ and some $L<\infty$}.
\end{equation}
Suppose that, for some $\gamma\in(0,\infty)$ and $a>0$, $\int e^{a|u_\omega|^\gamma}\,d\Pp\,d\nu<\infty$, or say that $\gamma=+\infty$ if $u$ is essentially bounded under $\Pp\times\nu$. Then, for every fixed $x\in X$, the process $(S_n(x))_{n\geq1}$ satisfies an ASIP with mean $m=\int u\,d\Pp\,d\nu$, variance $\sigma^2\geq0$, and error $O((\log n)^{2+1/\gamma})$, where
$1/\infty=0$. Moreover,
\[
 \sigma^2=\lim_{n\to \infty}\frac1n
 \int (S_n(\omega,y)-nm)^2\,d\Pp\,d\nu.
\]
\end{theorem}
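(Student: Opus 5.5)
The plan is to invoke the almost sure invariance principle of Cuny--Dedecker--Merlev\`ede~\cite{CDM24} for Markov chains with an $L^1$-type contraction. We apply it to the extended chain $Y_k=(\omega_{k-1},f_\omega^{k-1}(x))$ on $\mathcal E=T\times X$, already used in the proof of Theorem~\ref{thm:abstract-limit-theorems}. With the observable $v(t,y)=u_t(y)$, the increment $u_{\omega_k}(f^k_\omega(x))$ is a function of the next state, so $S_n$ is a Birkhoff sum along this chain.

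\textbf{Coupling estimate.} The results of~\cite{CDM24} require a coupling (or a Wasserstein-type) contraction with respect to a cost that is not necessarily a metric, plus moment conditions. Run the two-point motion from $x,y$ with the same noise, and set $D_k(x,y)=\int|u_{\omega_k}(f_\omega^k x)-u_{\omega_k}(f_\omega^k y)|\,d\Pp$. By independence of $\omega_k$ from $(f^k_\omega x,f^k_\omega y)$ and by~\eqref{eq:asip-final-observable-regularity}, $D_k\le L\int\Delta_{\alpha,\beta}(f^k_\omega x,f^k_\omega y)\,d\Pp$. Theorem~\ref{thm:product-weighted-global-contraction} then gives $D_k\le LC\rho^k\Delta_{\alpha,\beta}(x,y)$. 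Integrating $y$ against $\nu$ yields
\[
\sum_k\int\bigl|u_{\omega_k}(f^k_\omega x)-u_{\omega_k}(f^k_\omega y)\bigr|\,d\Pp\,d\nu(y)\le C'V(x)^\beta\int V^\beta d\nu<\infty .
\]
This exponential decay of the coupled difference is exactly the hypothesis of the form ``$\|E[X_k\mid\mathcal F_0]-E[X_k\mid\mathcal F'_0]\|_1\le C\rho^k$'' in~\cite{CDM24}. Their result gives an error of $O((\log n)^{2+1/\gamma})$ under $\psi$-type exponential moments $e^{a|u|^\gamma}$, and $O((\log n)^2)$ in the bounded case.

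\textbf{Start and variance.} We first obtain the ASIP for the stationary chain started from $p\times\nu$. We then pass to a fixed start $x$ by coupling the chain from $x$ with the chain from a $\nu$-distributed $y$ using the same noise. The difference $\sum_k|u_{\omega_k}(f^k_\omega x)-u_{\omega_k}(f^k_\omega y)|$ has finite expectation by the bound above, so it is a.s.\ finite and only changes the error by $O(1)$. The moment condition along the nonstationary trajectory is handled the same way; alternatively we apply~\cite{CDM24} directly, since its statements allow arbitrary initial laws under this contraction. The variance formula is the standard Green--Kubo identity: summability of the covariances follows from $|\int (u-m)\,P^k\bar u\,d\nu|\lesssim\rho^k$, using the same contraction together with $\int V^{2\beta}d\nu<\infty$.

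\textbf{Main obstacle.} The hardest step is matching the exact hypotheses of~\cite{CDM24}. These include the precise form of the dependence coefficients, which must be controlled through $\Delta_{\alpha,\beta}$ weighted by $V^\beta$ rather than through a bounded metric. They also require controlling the unbounded weights $V(f^k x)^\beta$ uniformly in $k$ via Proposition~\ref{LyapunovFunctionsProperties}, since $P^kV\le V+C_V$. I expect to need H\"older's inequality, trading a small power of the exponential moment of $u$ against the factor $V^\beta$, to verify the coefficient bounds in their required norm.
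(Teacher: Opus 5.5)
Your proposal follows essentially the same route as the paper. The stationary ASIP comes from \cite[Thm.~1]{CDM24}, with its $L^1$ coupling coefficient bounded through the averaged regularity and the global contraction. The fixed start $x$ is then handled by same-noise coupling with a $\nu$-distributed initial point, whose Birkhoff sums differ from $S_n(x)$ by an almost surely bounded amount.

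The obstacle you anticipate does not arise. The coupling condition used is simply $\int|u_{\omega_{k-1}}(f_\omega^{k-1}y)-u_{\omega_{k-1}}(f_\omega^{k-1}y^*)|\,d\Pp\,d\nu(y)\,d\nu(y^*)$ for two independent $\nu$-distributed starting points, with the semi-exponential tail imposed separately. Your same-noise estimate integrated over $\nu\times\nu$ bounds this by $LC\rho^{k-1}\iint\Delta_{\alpha,\beta}\,d\nu\,d\nu$, and the prefactor is absorbed by rescaling, so no H\"older trade-off is needed. The one step you should write out is the gluing of couplings in Lemma~\ref{lem:asip-bounded-comparison}, which makes the $O(1)$ transfer to fixed $x$ rigorous.
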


\begin{remark} \label{rem:variance_ASIP}
The mixed moment assumption~\eqref{eq:observable-mixed-moment} of
Theorem~\ref{thm:abstract-limit-theorems} implies the hypotheses above
with $\gamma=1$, giving an ASIP with error $O((\log n)^3)$. The mean
$m$ and the asymptotic variance $\sigma^2$ agree in both theorems.
\end{remark}

\begin{lemma}\label{lem:asip-bounded-comparison}
Let $U=(U_n)_{n\geq1}$ and $V=(V_n)_{n\geq1}$ be real-valued
processes. Assume that $U$ satisfies an ASIP with mean $m$ and error $O(a_n)$,
where $\inf_{n\geq1} a_n>0$ (respectively, with error $o(a_n)$, where
$a_n\to \infty$). Suppose that processes $U'$ and $V'$ can be
realized on a common probability space with
$U'\stackrel{\mathrm{law}}=U$, $V'\stackrel{\mathrm{law}}=V$, and
$\sup_{n\geq1}|V_n'-U_n'|<\infty$ almost surely.
Then $V$ satisfies an ASIP with the same mean, variance, and error.
\end{lemma}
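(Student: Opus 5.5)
The plan is to transport the Gaussian coupling of $U$ onto $V$ through a conditional-law (gluing) construction, then absorb the bounded difference into the error term. By hypothesis, $U$ has an ASIP realization. So there is a probability space carrying a process $U''\stackrel{\mathrm{law}}=U$ and independent $G_k\sim\cN(0,\sigma^2)$ with $U''_n-nm-\sum_{k\le n}G_k=O(a_n)$ almost surely. Separately, we have the coupling $(U',V')$ with $\sup_n|V'_n-U'_n|<\infty$ almost surely. The goal is a single probability space carrying $(U''',V''',(G_k))$ such that $(U''',(G_k))\stackrel{\mathrm{law}}=(U'',(G_k))$ and $(U''',V''')\stackrel{\mathrm{law}}=(U',V')$.

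To build it, I will apply the gluing lemma for Polish spaces. All processes take values in $\R^{\N}$, which is Polish, so regular conditional distributions exist. Let $\kappa(u,\cdot)$ be the conditional law of $V'$ given $U'=u$, and let $\kappa'(u,\cdot)$ be the conditional law of $(G_k)$ given $U''=u$. On $\R^{\N}\times\R^{\N}\times\R^{\N}$, I define the probability
\[
\mu(du\,dv\,dg)=\mathcal L(U)(du)\,\kappa(u,dv)\,\kappa'(u,dg).
\]
Take $(U''',V''',G''')$ to be the coordinate process under $\mu$. Then its $(u,g)$-marginal is the law of $(U'',(G_k))$, and its $(u,v)$-marginal is the law of $(U',V')$.

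Both properties I need are almost-sure properties of a marginal, so they transfer to the glued space. First, $G'''$ consists of independent $\cN(0,\sigma^2)$ variables, and the estimate $U'''_n-nm-\sum_{k\le n}G'''_k=O(a_n)$ holds almost surely, since this event depends only on $(u,g)$. Second, $\sup_n|V'''_n-U'''_n|<\infty$ almost surely, since that event depends only on $(u,v)$; both events are Borel in $\R^{\N}$. In particular $V'''\stackrel{\mathrm{law}}=V$, so the joint law of $V$ has not been changed.

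It remains to absorb the bounded difference. On the glued space,
\[
V'''_n-nm-\sum_{k\le n}G'''_k=\bigl(U'''_n-nm-\textstyle\sum_{k\le n}G'''_k\bigr)+(V'''_n-U'''_n).
\]
The last term is $O(1)$ almost surely, with a random constant. For the $O(a_n)$ case, $O(1)\subset O(a_n)$ because $\inf_n a_n>0$. For the $o(a_n)$ case, $O(1)\subset o(a_n)$ because $a_n\to\infty$. Hence $V$ satisfies an ASIP with the same mean, variance, and error.

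The only genuinely delicate step is the gluing construction, specifically checking that the glued space preserves the joint law of $V$ together with the Gaussians in the sense the ASIP definition requires. The definition only asks that the law of $V$ be unchanged, and the construction gives $V'''\stackrel{\mathrm{law}}=V$, so no further compatibility is needed.
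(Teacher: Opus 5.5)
Your proposal is correct and is essentially the paper's proof: you disintegrate both couplings over the common law of $U$, glue them conditionally independently as $\mathcal L(U)(du)\,\kappa(u,dv)\,\kappa'(u,dg)$, and absorb the almost surely bounded difference $V_n-U_n$ into the $O(a_n)$ or $o(a_n)$ error. The only cosmetic difference is that you disintegrate the sequence $(G_k)$ rather than the Gaussian walk $\sum_{k\le n}G_k$, which changes nothing.
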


\begin{proof}
Let $\pi$ be the joint law of $(U',V')$ on $\mathcal E^2$, where
$\mathcal E=\R^\N$, and let $\widehat\pi$ be the joint law of a copy
of $U$ and the Gaussian walk supplied by its ASIP. Both laws have the
same first marginal $\mu_U$. Disintegrate them as
$\pi(du,dv)=\mu_U(du)K(u,dv)$ and
$\widehat\pi(du,db)=\mu_U(du)L(u,db)$, and glue them by
$\mu_U(du)K(u,dv)L(u,db)$. On this space,
\[
v_n-nm-b_n=(v_n-u_n)+(u_n-nm-b_n).
\]
The first term is almost surely bounded, which preserves the stated
$O(a_n)$ or $o(a_n)$ error.
\end{proof}

\begin{proof}[Proof of Theorem~\ref{cor:ASIP-global-contraction}]
Work on $(\Omega\times X^2,\Pp\times\nu\times\nu)$, with
coordinates $(\omega,y,y^*)$. Write $Y=y$ and $Y^*=y^*$ for the
initial conditions, which are independent with law $\nu$ and
independent of $\omega$.
We may take $\varepsilon_0\leq1$. Since $d\leq1$, $\beta\leq1$ and
$\int V\,d\nu<\infty$, both
$D_\nu=\int\Delta_{\alpha,\beta}(y,y^*)\,d\nu(y)\,d\nu(y^*)$
and $D_x=\int\Delta_{\alpha,\beta}(x,y)\,d\nu$ are finite.
Set $Y_k=u_{\omega_{k-1}}(f_\omega^{k-1}(Y))-m$, and define
$Y_k^*$ similarly with $Y^*$ and the same symbols. Then $(Y_k)_{k\geq1}$ is
stationary and centered. The independence of $\omega_{k-1}$ from the
trajectories up to time $k-1$, the averaged regularity
assumption~\eqref{eq:asip-final-observable-regularity}, and
\eqref{eq:product-global-contraction} give, for $k\geq1$,
\[
 \delta(k)\eqdef \int|Y_k-Y_k^*|\,d\Pp\,d\nu(y)\,d\nu(y^*)
 \leq LC\rho^{k-1}D_\nu.
\]
Moreover, for finite $\gamma$, the assumed moment implies
$\int e^{a'|Y_1|^\gamma}\,d\Pp\,d\nu<\infty$ for some $a'>0$;
for $\gamma=\infty$, $Y_1$ is essentially bounded. Thus the tail
condition of~\cite[Thm.~1]{CDM24} has exponent $\gamma$, while its
coupling condition has exponent $1$ (rescale $Y_k$ by a constant to
absorb the prefactor in the bound for $\delta(k)$). The theorem gives
the stationary ASIP for $S_n(\omega,Y)-nm$ with error
$O((\log n)^{2+1/\gamma})$, and the formula for $\sigma^2$.

For a fixed $x\in X$, use the same symbols to couple the stationary
process $S_n(\omega,Y)$ with $S_n(\omega,x)$. By Tonelli and the same
estimate,
\[
 \int\sum_{k\geq0}
 \bigl|u_{\omega_k}(f_\omega^k(x))
       -u_{\omega_k}(f_\omega^k(y))\bigr|\,d\Pp\,d\nu
 \leq \frac{LC}{1-\rho}D_x<\infty.
\]
Consequently $\sup_{n\geq1}|S_n(\omega,x)-S_n(\omega,Y)|<\infty$ almost
surely. 
Lemma~\ref{lem:asip-bounded-comparison} transfers the stationary
ASIP to $S_n(x)=S_n(\cdot,x)$ with the same variance and polylogarithmic error.
\end{proof}

\begin{remark}
The semi-exponential tail is needed only for the polylogarithmic rate. Keeping the averaged regularity assumption, but requiring only
$u\in L^r(\Pp\times\nu)$ for some
$r>2$, \cite[Thm.~2 and Rem.~4]{CDM18} gives for every $2<q<r$ an ASIP
with error $o(n^{1/q})$.
\end{remark}

\section{Linear cocycles} \label{s:linear}
In this section, we study random matrix products and retain the
notation of the introduction. Thus $E$ is the equator of the cocycle
$A:\Omega\to\operatorname{GL}(m,\R)$, and
$f:\Omega\times P(\R^m)\to P(\R^m)$ is its projective random map.
We also put
\[
 \ell(\omega)\eqdef\max\{1,\|A(\omega)\|,\|A(\omega)^{-1}\|\}.
\]

\subsection{Global contraction and spectral gap}

\begin{proposition}
\label{prop:equator-repeller}
Assume~\eqref{eq:exp-moment} and
\(\lambda_1(A)>\lambda_2(A)\). If \(E\neq\{0\}\), then \(P(E)\) is a compact set completely invariant and repelling for \(f\) in the sense of
Definition~\ref{def:repelling-set}. 
\end{proposition}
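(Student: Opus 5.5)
The argument compares the growth of $A^n$ on $E$ with the growth of the induced cocycle $\tilde A$ on $\tilde V=\R^m/E$; the main work is a growth bound for $\tilde A$ that is uniform in the direction. First, the topological claims. $P(E)$ is compact, since $E$ is a closed subspace. It is totally invariant: $A(\omega)$ is invertible and $A(\omega)E=E$ almost surely, so $f_\omega^{-1}(P(E))=P(E)$. Since $E$ is proper, $P(E)$ has empty interior and $\partial P(E)=P(E)$.

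For the distance, identify $\tilde V$ with $E^\perp$ and let $\pi$ be the orthogonal projection onto $E^\perp$. Then $d(\hat y,P(E))=\|\pi y\|/\|y\|$. Since $\pi A(\omega)=\tilde A(\omega)\pi$, this gives the exact identity
\[
\log\frac{d(\hat y,P(E))}{d(f^n_\omega\hat y,P(E))}
=\log\frac{\|A^n(\omega)y\|}{\|y\|}-\log\frac{\|\tilde A^n(\omega)\pi y\|}{\|\pi y\|}.
\]
The first term is at most $\log\|A^n(\omega)\|$. The second is at least $-\log\|(\tilde A^n)^{-1}\|$. Both bounds have moments by \eqref{eq:exp-moment}, so Lemma~\ref{FatouReverse} applies.

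Now fix a unit vector $x\in E$ and let $\hat y\to\hat x$ with $\hat y\notin P(E)$, normalizing $\|y\|=1$ and $y\to x$.

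The first integral converges to $\int\log\|A^n x\|\,d\Pp\le\int\log\|A^n|_E\|\,d\Pp$. By subadditivity, this is at most $n(\lambda_1(A|_E)+\varepsilon)$ for large $n$.

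For the second integral, I claim a uniform bound: for large $n$,
\[
\inf_{v\in\tilde V,\ \|v\|=1}\int\log\|\tilde A^n(\omega)v\|\,d\Pp\ \ge\ n(\lambda_1(A)-\varepsilon).
\]
Granting this, the limsup in Definition~\ref{def:repelling-set} is at most $n(\lambda_1(A|_E)-\lambda_1(A)+2\varepsilon)<0$ once $\varepsilon$ is small.

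The claim is the main obstacle, and I would prove it in three steps.

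(a) $\tilde A$ is quasi-irreducible and $\lambda_1(\tilde A)=\lambda_1(A)$. Suppose $W'\subset\tilde V$ is a nonzero invariant subspace with $\lambda_1(\tilde A|_{W'})<\lambda_1(A)$. Its preimage $W\supsetneq E$ is invariant and satisfies $\lambda_1(A|_W)=\max\{\lambda_1(A|_E),\lambda_1(\tilde A|_{W'})\}<\lambda_1(A)$. This contradicts the maximality of $E$. The same formula, applied to $W=\R^m$, gives $\lambda_1(\tilde A)=\lambda_1(A)$, since $\lambda_1(A|_E)<\lambda_1(A)$.

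(b) For a quasi-irreducible cocycle, every stationary probability $\mu$ on $P(\tilde V)$ satisfies $\int\!\!\int\log\frac{\|\tilde A v\|}{\|v\|}\,d\Pp\,d\mu=\lambda_1(\tilde A)$. This is the Furstenberg-type statement available in \cite{AG20,BMbook}.

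(c) Apply Proposition~\ref{Kingman-noncompact} on the compact space $P(\tilde V)$, with $V\equiv1$, to the additive continuous sequence
\[
\phi_n(\hat v)=\int\Bigl(n(\lambda_1(A)-\varepsilon)-\log\tfrac{\|\tilde A^n v\|}{\|v\|}\Bigr)\,d\Pp.
\]
Continuity and the upper bound on $\phi_1$ follow from the moment assumption and dominated convergence. By (b), $\int\phi_n\,d\mu=-n\varepsilon<0$ for every stationary $\mu$. The proposition then gives $\sup\phi_n<0$ for large $n$, which is the claim.

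Finally, choose $n_0$ so that both estimates hold for $n\ge n_0$. Then $P(E)$ is repelling for every $n\ge n_0$ and every $\hat x\in\partial P(E)$, as required.
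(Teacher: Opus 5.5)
Your proof is correct and essentially the paper's. The same distance identity reduces the repelling quantity to $a_n(\hat y)-b_n(Q\hat y)$. Quasi-irreducibility of $\tilde A$, together with the fact that every stationary measure on $P(\tilde V)$ averages the potential to $\lambda_1(A)$, then feeds a uniform Kingman argument on the compact space $P(\tilde V)$ giving $\min b_n>n(\lambda_1(A)-\varepsilon)$.

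The differences are minor. On the $E$ side you bound $a_n$ by $\int\log\|A^n|_E\|\,d\Pp$ and Fekete's lemma, a small simplification over the paper's variational formula plus uniform Kingman on $P(E)$. You derive the quotient facts from the block-triangular formula $\lambda_1(A|_W)=\max\{\lambda_1(A|_E),\lambda_1(\tilde A|_{W/E})\}$, which you assert without proof or citation, rather than citing \cite[Claim~5.5.2]{BMbook}. You also omit the moment bound $l_{P(E)}(f_\omega)\le 2\ell(\omega)^4$, which the paper records for later use in Proposition~\ref{prop:repeller-drift} but which the statement itself does not require.
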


\begin{proof}
Assume that \(E\neq\{0\}\). Since \(E\) is \(A\)-invariant and
\(A(\omega)\) is invertible, \(A(\omega)E=E\) for \(\Pp\)-a.e.~\(\omega\). Thus \(P(E)\) is compact and completely invariant.

By~\cite[Lem.~5.6.1]{BMbook},
$\Lip(f_\omega),\Lip(f_\omega^{-1})\leq\ell(\omega)^4$.
Since $P(E)$ is totally invariant, the definition of $l_{P(E)}$ gives
\[
l_{P(E)}(f_\omega)
\leq\Lip(f_\omega)+\Lip(f_\omega^{-1})
\leq 2\ell(\omega)^4.
\]
Thus~\eqref{eq:exp-moment} implies
$\int l_{P(E)}(f_\omega)^\eta\,d\Pp<\infty$
for $0<\eta\leq\eta_0/4$, as required by
Proposition~\ref{prop:repeller-drift}.

Put $\lambda_E\eqdef\lambda_1(A|_E)$. 
By the definition of the equator, $\lambda_E<\lambda_1(A)$. Let \(Q:\R^m\to\tilde{V}\) be the canonical quotient map, equip
\(\tilde{V}=\R^m/E\) with the quotient Euclidean norm, and let \(\tilde{A}\) denote
the cocycle induced by \(A\) on \(\tilde{V}\). According to~\cite[Claim 5.5.2]{BMbook}, 
$\lambda_1(\tilde{A})=\lambda_1(A)$,
and \(\tilde{A}\) is quasi-irreducible.
For $\hat x\in X$, we also write $Q\hat x\eqdef\widehat{Qx}$.

For \(\hat x\in P(\R^m)\) and
\(\hat v\in P(\tilde{V})\), define
\[
 a_n(\hat x)
 \eqdef
 \int\log\frac{\|A^n(\omega)x\|}{\|x\|}\,d\Pp,
 \qquad
 b_n(\hat v)
 \eqdef
 \int\log\frac{\|\tilde{A}^n(\omega)v\|}{\|v\|}\,d\Pp.
\]
The exponential moment implies that these functions are well defined and
continuous. 
Moreover, the sequences \((a_n|_{P(E)})_{n\geq1}\) and \((b_n)_{n\geq1}\) are additive for the
projective Markov operators induced by \(A|_E\) and \(\tilde{A}\),
respectively. 
Set $\theta=(\lambda_E+\lambda_1(A))/2$.
By the variational formula~\cite[Thm.~5.2.1]{BMbook}, every
stationary average of the projective potential on $P(E)$ is at most
$\lambda_E<\theta$. 
For every stationary probability $\tilde\mu$ on
$P(\tilde V)$, quasi-irreducibility gives~\cite[Claim~5.5.2]{BMbook}
\[
\int b_1(\hat v)\,d\tilde\mu
=\int
\log\frac{\|\tilde A(\omega)v\|}{\|v\|}
\,d\Pp\,d\tilde\mu
=\lambda_1(\tilde A)=\lambda_1(A)>\theta.
\]
Applying the compact uniform Kingman
theorem~\cite[Thm.~C]{BMbook} to the additive sequences
$(a_n|_{P(E)}-n\theta)_{n\geq1}$ and $(n\theta-b_n)_{n\geq1}$, respectively,
we obtain, for every sufficiently large $n$,
\[
 \max_{\hat x\in P(E)}a_n(\hat x)<n\theta
 <\min_{\hat v\in P(\tilde V)}b_n(\hat v).
\]

We now identify the main term from
Definition~\ref{def:repelling-set}. To do this, for every \(x\in\R^m\setminus E\), the definition of the angular distance
and the orthogonal decomposition \(\R^m=E\oplus E^\perp\) give
\[
 d(\hat x,P(E))
 =
 \frac{\|Qx\|}{\|x\|},
 \qquad
 \frac{d(\hat x,P(E))}
 {d(f_\omega^n(\hat x),P(E))}
 =
 \frac{\|A^n(\omega)x\|/\|x\|}
 {\|\tilde{A}^n(\omega)Qx\|/\|Qx\|}.
\]
Thus,  for
\(\hat y\in X\),
\begin{equation}
\label{eq:linear-boundary-Hn}
 \int
 \log\left(
 \frac{d(\hat y,P(E))}
 {d(f_\omega^n(\hat y),P(E))}
 \right)\,d\Pp
 =
 a_n(\hat y)-b_n(Q\hat y).
\end{equation}

For every sufficiently large $n$ and every
$\hat x\in P(E)$, continuity of $a_n$ and
\eqref{eq:linear-boundary-Hn} give
\[
 \limsup_{\substack{\hat y\to\hat x\\\hat y\in X}}
 \int\log\frac{d(\hat y,P(E))}{d(f_\omega^n(\hat y),P(E))}
 \,d\Pp
 \leq a_n(\hat x)-\min_{\hat v\in P(\tilde V)}b_n(\hat v)<0.
\]
This is precisely the repelling condition in
Definition~\ref{def:repelling-set}.
\end{proof}

\begin{proposition}
\label{prop:projective-mostly}
Assume~\eqref{eq:exp-moment} and
\(\lambda_1(A)>\lambda_2(A)\). Then the
restricted projective random map \(f|_X\) is mostly contracting where $X=P(\R^m)\setminus  P(E)$. 
More precisely, every $f$-stationary probability $\mu$ on $P(\R^m)$
with $\mu(P(E))=0$ satisfies
$\lambda(\mu)\leq\lambda_2(A)-\lambda_1(A)<0$.
\end{proposition}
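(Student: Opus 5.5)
We will compute $\lambda(\mu)$ directly from the derivative of the projective action and then compare stationary averages with the exponents via the Furstenberg-type variational formula. For $g\in\operatorname{GL}(m,\R)$ acting on $P(\R^m)$ with the angular distance, the local Lipschitz constant satisfies
\[
 Lg(\hat x)=\frac{\|\wedge^2 g\,(x\wedge \cdot)\|\ \text{restricted}}{\ }\le \frac{\|\wedge^2 g\|\,\|x\|^2}{\|gx\|^2}.
\]
Concretely, one checks from $d(\hat x,\hat y)=\|x\wedge y\|/(\|x\|\|y\|)$ that
\[
\limsup_{\hat y,\hat z\to\hat x}\frac{d(g\hat y,g\hat z)}{d(\hat y,\hat z)}
\le \frac{\|\wedge^2 g\|\,\|x\|^2}{\|gx\|^2}.
\]
Applying this to $g=A^n(\omega)$ gives
\[
\phi_n(\hat x)\le \int\log\|\wedge^2A^n(\omega)\|\,d\Pp-2\int\log\frac{\|A^n(\omega)x\|}{\|x\|}\,d\Pp .
\]

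Next we integrate against a stationary $\mu$ with $\mu(P(E))=0$. The second term is additive: by stationarity it equals $2n\int\!\!\int\log(\|A(\omega)x\|/\|x\|)\,d\Pp\,d\mu$. The key claim is that this one-step average equals $\lambda_1(A)$ for every stationary $\mu$ giving no mass to $P(E)$.

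To establish the claim, we first reduce to ergodic $\mu$ by ergodic decomposition; almost every ergodic component still satisfies $\mu(P(E))=0$, since $P(E)$ is invariant. Birkhoff's theorem applied to the skew product $(\omega,\hat x)\mapsto(\sigma\omega,f_\omega\hat x)$ then shows that the average is the a.s. growth rate $\lim\frac1n\log\|A^n(\omega)x\|$ for $\mu$-a.e. $\hat x$. By the Furstenberg--Kifer description of the equator (\cite{FK83}, as recalled in the introduction), this growth rate equals $\lambda_1(A)$ for every $x\notin E$. Integrability of the limit is ensured by \eqref{eq:exp-moment}, which bounds $|\log(\|Ax\|/\|x\|)|\le\log\ell$. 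Equivalently, one may invoke the variational formula \cite[Thm.~5.2.1]{BMbook} together with the fact that an ergodic stationary measure with average $<\lambda_1$ must live on an invariant subspace of lower exponent, hence on $P(E)$.

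Dividing by $n$ and letting $n\to\infty$, with $\frac1n\int\log\|\wedge^2A^n\|\to\lambda_1+\lambda_2$, we obtain
\[
\lambda(\mu)\le \lambda_1+\lambda_2-2\lambda_1=\lambda_2-\lambda_1<0.
\]
The supremum over such $\mu$ gives $\lambda(f|_X)<0$; stationary measures of $f|_X$ are exactly those on $P(\R^m)$ with $\mu(P(E))=0$. The main obstacle is the identification of the stationary average with $\lambda_1$ off $P(E)$, i.e. the correct use of the equator characterization. The derivative bound is routine linear algebra, which we would verify by writing $gy=gx+g(y-x)$ for $y$ near $x$ and $y-x\perp x$.
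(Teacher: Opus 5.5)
Your proof is correct and follows the paper's argument: you use the local Lipschitz bound $Lf_\omega^n(\hat x)\leq\|\wedge^2A^n(\omega)\|/\|A^n(\omega)x\|^2$, then stationarity to turn the vector-growth term into $n\lambda_1(A)$, then the limit $\frac1n\int\log\|\wedge^2A^n(\omega)\|\,d\Pp\to\lambda_1(A)+\lambda_2(A)$. The only difference is how you justify $\int\!\!\int\log(\|A(\omega)x\|/\|x\|)\,d\Pp\,d\mu=\lambda_1(A)$: you use Birkhoff's theorem together with the Furstenberg--Kifer almost sure growth rate off $E$ (the same fact the paper uses in Proposition~\ref{prop:projective-stationary-proximal}), whereas the paper cites the maximizing-measure result \cite[Cor.~5.4.2 and Rem.~5.4.3]{BMbook}; both are valid, and your garbled first display should simply be replaced by the ``concretely'' one.
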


\begin{proof}
If $\mu$ is an $f$-stationary probability on $P(\R^m)$ giving zero
mass to $P(E)$, then it is maximizing by
\cite[Cor.~5.4.2 and Rem.~5.4.3]{BMbook}. That is,  
\begin{equation}
\label{eq:maximizing-linear-stationary}
        \int \phi_A\,d\mu=\lambda_1(A), \quad \text{where} \quad \phi_A(\hat x)
 \eqdef
 \int\log\frac{\|A(\omega)x\|}{\|x\|}\,d\Pp.
\end{equation}
The additive cocycle identity and stationarity then give, for every
\(n\geq1\),
\begin{equation}
\label{eq:stationary-vector-growth}
 \int
 \log\frac{\|A^n(\omega)x\|}{\|x\|}
 \,d\Pp\,d\mu
 =
 n\lambda_1(A).
\end{equation}
The local projective Lipschitz estimate
\cite[Lem.~5.6.1]{BMbook} states that
\begin{equation*}
 Lf_\omega^n(\hat x)
 \leq
 \frac{\|\wedge^2A^n(\omega)\|}
 {\|A^n(\omega)x\|^2}
\end{equation*}
for every unit representative \(x\) of \(\hat x\). Taking logarithms,
integrating with respect to \(\Pp\) and \(\mu\), and
using~\eqref{eq:stationary-vector-growth}, we obtain
\[
 \frac1n
 \int\log Lf_\omega^n(\hat x)\,d\Pp\,d\mu
 \leq
 \frac1n\int\log\|\wedge^2A^n(\omega)\|\,d\Pp
 -2\lambda_1(A).
\]
Passing to the limit and using
\[
 \lim_{n\to \infty}
 \frac1n\int\log\|\wedge^2A^n(\omega)\|\,d\Pp
 =
 \lambda_1(A)+\lambda_2(A)
\]
proves~$\lambda(\mu)\leq \lambda_2(A)-\lambda_1(A)$ and concludes the proof.
\end{proof}

\begin{proposition}
\label{prop:projective-stationary-proximal}
Assume~\eqref{eq:exp-moment} and
\(\lambda_1(A)>\lambda_2(A)\). Then the projective random map
\(f|_X\) is stationarily proximal where $X=P(\R^m)\setminus P(E)$. 
\end{proposition}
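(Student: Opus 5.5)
To prove stationary proximality, I will take an arbitrary $P^{[2]}$-stationary probability $\pi$ on $X\times X$ and show that $\int d(\hat x,\hat y)\,d\pi=0$. The natural tool is the area functional. For unit representatives $x,y$,
\[
 d\bigl(f_\omega^n(\hat x),f_\omega^n(\hat y)\bigr)
 =\frac{\|\wedge^2A^n(\omega)(x\wedge y)\|}{\|A^n(\omega)x\|\,\|A^n(\omega)y\|}
 \leq \frac{\|\wedge^2A^n(\omega)\|\,d(\hat x,\hat y)}{\|A^n(\omega)x\|\,\|A^n(\omega)y\|}.
\]
Taking logarithms gives
\[
\log d(f_\omega^n\hat x,f_\omega^n\hat y)\le \log d(\hat x,\hat y)+\log\|\wedge^2A^n\|-\log\|A^nx\|-\log\|A^ny\|.
\]

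First, I restrict to ergodic $\pi$ by ergodic decomposition; the extreme points are still stationary and supported in $X\times X$. The marginals of $\pi$ are $f$-stationary on $X$, so they give zero mass to $P(E)$. Proposition~\ref{prop:projective-mostly} (through \cite[Cor.~5.4.2]{BMbook}) then gives~\eqref{eq:stationary-vector-growth} for each marginal: $\int\log\|A^n(\omega)x\|\,d\Pp\,d\mu=n\lambda_1(A)$. I would then integrate the pointwise inequality against $\Pp\times\pi$, restricted to the set $\{\hat x\neq\hat y\}$. That set is invariant because the $A(\omega)$ are invertible, so I may assume without loss of generality that $\pi$ gives it full mass. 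This yields
\[
\int\Bigl(\log d(f^n_\omega\hat x,f^n_\omega\hat y)-\log d(\hat x,\hat y)\Bigr)\,d\Pp\,d\pi\le \int\log\|\wedge^2A^n\|\,d\Pp-2n\lambda_1(A)\sim n(\lambda_2-\lambda_1)\to-\infty.
\]

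The main obstacle is that $\log d$ may fail to be $\pi$-integrable, so stationarity cannot be invoked directly to say the left side vanishes. I would avoid this with a truncation that only uses the bounded function $\log\max\{d,e^{-M}\}$, or more cleanly with a pointwise ergodic argument. Consider the additive cocycle
\[
c_n(\omega,\hat x,\hat y)=\log d(f^n\hat x,f^n\hat y)-\log d(\hat x,\hat y).
\]
It has an integrable upper bound by the exponential moment, and its expectation is at most $-\delta n$ for large $n$. By Kingman's theorem for the skew product $\sigma\times f^{[2]}$, which is measure-preserving for $\Pp\times\pi$, one gets $\frac1n c_n\to\lambda_2-\lambda_1<0$ almost surely. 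Indeed, $\frac1n\log\|\wedge^2A^n\|\to\lambda_1+\lambda_2$ and $\frac1n\log\|A^nx\|\to\lambda_1$ for $\mu$-a.e.\ $\hat x$, with the latter from Birkhoff applied to the ergodic marginal and~\eqref{eq:maximizing-linear-stationary}. Hence $d(f_\omega^n\hat x,f_\omega^n\hat y)\to0$ for $\Pp\times\pi$-a.e.\ $(\omega,\hat x,\hat y)$.

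Finally, stationarity and dominated convergence (using $d\le1$) give
\[
\int d\,d\pi=\int\!\!\int d(f^n_\omega\hat x,f^n_\omega\hat y)\,d\Pp\,d\pi\to0,
\]
so $\pi$ is concentrated on the diagonal. This argument uses only the simple top exponent and the maximizing property of stationary measures off the equator. If the Birkhoff step is problematic because the marginal is not ergodic, I would replace it by the decomposition of the marginal into ergodic components. Each of these still avoids $P(E)$ and therefore still satisfies~\eqref{eq:maximizing-linear-stationary}.
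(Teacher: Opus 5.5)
Your argument is correct, but it takes a different route from the paper.

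\textbf{The paper's route.} The paper proves synchronization for \emph{every} pair $\hat x,\hat y\in X$. It uses the bound $d(f_\omega^n\hat x,f_\omega^n\hat y)\leq\|\wedge^2A^n(\omega)\|/(\|A^n(\omega)x\|\,\|A^n(\omega)y\|)$, Kingman's theorem for $\wedge^2A$, and the pointwise growth $\frac1n\log\|A^n(\omega)z\|\to\lambda_1(A)$ almost surely for each fixed $z\notin E$ (\cite[Prop.~5.5.1 and Rem.~5.5.3]{BMbook}). It then concludes by the implication (iii)$\Rightarrow$(i) of Corollary~\ref{prop:synchronization-equivalences}.

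\textbf{Your route.} You fix a stationary $\pi$ and prove synchronization only for $(\Pp\times\pi)$-almost every $(\omega,\hat x,\hat y)$. You get the vector growth along $\pi$-typical directions from Birkhoff's theorem on the (ergodic components of the) marginals together with the maximizing identity \eqref{eq:maximizing-linear-stationary}. You then run the (iv)$\Rightarrow$(i) stationarity and dominated-convergence argument inline.

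\textbf{What each buys.} Your version needs only stationary-measure information already used in Proposition~\ref{prop:projective-mostly}. It would therefore apply verbatim whenever stationary measures off the exceptional set are known to be maximizing. The paper's version relies on the finer every-vector growth theorem characterizing the equator. In exchange, it delivers the stronger property (iii) directly; in your approach (iii) is recovered only afterwards, through Corollary~\ref{prop:synchronization-equivalences}.

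\textbf{Simplifications.} The ergodic decomposition of $\pi$, the restriction to $\{\hat x\neq\hat y\}$, and the cocycle $c_n$ with its integrability worries are not needed. Using $\|x\wedge y\|\leq1$, the three almost sure limits already give
\[
\limsup_{n\to\infty}\frac1n\log d(f_\omega^n\hat x,f_\omega^n\hat y)\leq\lambda_2(A)-\lambda_1(A)
\]
$(\Pp\times\pi)$-almost surely. Also, those limits only justify this $\limsup$ bound, not the equality $\frac1n c_n\to\lambda_2-\lambda_1$ that you state. The bound is all you actually use.
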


\begin{proof}
For \(\hat x,\hat y\in X\), choose unit representatives \(x,y\). The angular distance can be written as 
\begin{equation}
\label{eq:linear-proximal-angular-bound}
 \begin{aligned}
 d(f_\omega^n(\hat x),f_\omega^n(\hat y))
 =
 \frac{\|\wedge^2A^n(\omega)(x\wedge y)\|}
 {\|A^n(\omega)x\|\,\|A^n(\omega)y\|}\leq
 \frac{\|\wedge^2A^n(\omega)\|}
 {\|A^n(\omega)x\|\,\|A^n(\omega)y\|}.
 \end{aligned}
\end{equation}
By \cite[Prop.~5.5.1 and Rem.~5.5.3]{BMbook}, every nonzero
vector \(z\notin E\) satisfies
\begin{equation}
\label{eq:linear-proximal-vector-growth}
 \lim_{n\to \infty}\frac1n\log\|A^n(\omega)z\|
 =\lambda_1(A)
 \qquad\text{for \(\Pp\)-a.e.~\(\omega\)}.
\end{equation}
On the other hand, Kingman's theorem applied to the exterior-square
cocycle gives
\begin{equation}
\label{eq:linear-proximal-wedge-growth}
 \lim_{n\to \infty}\frac1n
 \log\|\wedge^2A^n(\omega)\|
 =\lambda_1(A)+\lambda_2(A)
 \qquad\text{for \(\Pp\)-a.e.~\(\omega\)}.
\end{equation}
It follows from~\eqref{eq:linear-proximal-angular-bound}, \eqref{eq:linear-proximal-vector-growth}, and \eqref{eq:linear-proximal-wedge-growth} that, for every fixed
\(\hat x,\hat y\in X\) with $\hat x \neq \hat y$
\begin{equation*}
 \limsup_{n\to \infty}\frac1n
 \log d(f_\omega^n(\hat x),f_\omega^n(\hat y))
 \leq\lambda_2(A)-\lambda_1(A)<0 \quad \text{for \(\Pp\)-a.e.~\(\omega\).}
\end{equation*}
If $\hat x=\hat y$, the distance is identically zero. Thus
$d(f_\omega^n(\hat x),f_\omega^n(\hat y))\to 0$ for
$\Pp$-almost every~$\omega$, so $f|_X$ is synchronizing.

The implication \textup{(iii)}$\Rightarrow$\textup{(i)} of
Corollary~\ref{prop:synchronization-equivalences} now gives
stationary proximality.
\end{proof}

\begin{proof}[Proof of Theorem~\ref{main:spectral-gap-linear}]
If \(E=\{0\}\), then \(X=P(\R^m)\) is compact and the constant
function \(V\equiv1\) is a Lyapunov function. Propositions~\ref{prop:projective-mostly}
and~\ref{prop:projective-stationary-proximal}, together with
Theorem~\ref{thm:product-weighted-global-contraction}, give the result.
If \(E\neq\{0\}\), Proposition~\ref{prop:equator-repeller} verifies the
repeller and moment hypotheses, while Propositions~\ref{prop:projective-mostly}
and~\ref{prop:projective-stationary-proximal} verify the remaining
hypotheses of Corollary~\ref{cor:repeller-quasicompact}. 
The conclusion follows from that corollary.
\end{proof}

\subsection{Limit theorems}
We prove Theorems~\ref{main:norm-clt},
\ref{mainthm:pointwise-CLT-BE}, and~\ref{mainthm:linear-asip-final}.
For $\omega\in\Omega$ and $\hat x\in X$, write
\begin{equation}
\label{eq:linear-logarithmic-cocycle}
 \phi(\omega,\hat x)=\phi_\omega(\hat x)
 \eqdef
 \log\frac{\|A(\omega)x\|}{\|x\|},
 \qquad
 \xi(\omega,\hat x)
 \eqdef
 \phi(\omega,\hat x)-\lambda_1(A).
\end{equation}
These functions depend only on the first random symbol, and
\begin{equation}
\label{eq:linear-logarithmic-sum}
 \sum_{j=0}^{n-1}
 \xi\bigl(\sigma^j\omega,f_\omega^j(\hat x)\bigr)
 =
 \log\frac{\|A^n(\omega)x\|}{\|x\|}-n\lambda_1(A),
 \qquad n\geq1.
\end{equation}
Since
$\|A(\omega)^{-1}\|^{-1}\leq\|A(\omega)x\|/\|x\|
\leq\|A(\omega)\|$, we have
\begin{equation}
\label{eq:linear-logarithmic-one-step-bound}
 |\phi(\omega,\hat x)|\leq\log\ell(\omega),
 \qquad
 |\xi(\omega,\hat x)|
 \leq |\lambda_1(A)|+\log\ell(\omega).
\end{equation}
Moreover, by~\cite[Lem.~7.4.1]{BMbook}, for every
$0<\theta\leq1$ there is $K_\theta>0$ such that
\begin{equation}
\label{eq:linear-projective-potential-holder}
 |\phi(\omega,\hat x)-\phi(\omega,\hat y)|
 \leq
 K_\theta\ell(\omega)^{2\theta}d(\hat x,\hat y)^\theta.
\end{equation}

\subsubsection{Comparison with the operator norm}
We record the comparison estimate needed for the ASIP and for the indentification of variances;
compare~\cite[Claim~6.2.2]{BMbook}.

\begin{lemma}
\label{lem:linear-direction-comparison}

For every nonzero $x\in\R^m\setminus E$,
\begin{equation}\label{eq:linear-operator-remainder-as}
\log\frac{\|A^n(\omega)x\|}{\|x\|}
=\log\|A^n(\omega)\|+O_{x,\omega}(1)
\qquad\text{as $n\to \infty$, for $\Pp$-a.e.~$\omega$.}
\end{equation}

\end{lemma}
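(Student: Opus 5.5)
The plan is to prove the two inequalities separately. The upper bound $\log(\|A^n(\omega)x\|/\|x\|)\leq\log\|A^n(\omega)\|$ is trivial, so all the work is in the lower bound. I would use the singular value decomposition of $A^n(\omega)$: let $s_1(n)\geq s_2(n)\geq\cdots$ be its singular values and $v_n=v_n(\omega)$ a unit top right singular vector, i.e. a top eigenvector of $A^n(\omega)^*A^n(\omega)$. Writing $x=\langle x,v_n\rangle v_n+w$ with $w\perp v_n$, we get
\[
 \|A^n(\omega)x\|\geq s_1(n)|\langle x,v_n\rangle| ,
 \qquad
 \|A^n(\omega)x\|\leq s_1(n)|\langle x,v_n\rangle|+s_2(n)\|x\|.
\]
Since $\|A^n(\omega)\|=s_1(n)$, the lower bound reduces to showing that $|\langle x,v_n\rangle|$ stays bounded away from $0$ for $\Pp$-a.e.~$\omega$.

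\emph{Step 1: convergence of $\hat v_n$.} Show that $\hat v_n\to\hat v_\infty(\omega)$ almost surely, at an exponential rate. Since $A^{n+1}=A_nA^n$, the standard perturbation estimate for the top singular direction gives
\[
 d(\hat v_n,\hat v_{n+1})\leq C\,\ell(\omega_n)^2\frac{s_2(n)}{s_1(n)}.
\]
Indeed, $A^{n+1}$ and $A^n$ differ by a factor whose distortion is at most $\ell(\omega_n)^2$, and the top singular direction of $A^n$ is isolated by the ratio $s_2/s_1$. Several inputs control the right-hand side. We have $s_1s_2=\|\wedge^2A^n\|$ and $s_1=\|A^n\|$. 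Kingman's theorem, as in \eqref{eq:linear-proximal-wedge-growth}, gives $\tfrac1n\log(s_2(n)/s_1(n))\to\lambda_2(A)-\lambda_1(A)<0$. The exponential moment \eqref{eq:exp-moment} together with Borel--Cantelli gives $\log\ell(\omega_n)=o(n)$. Hence the increments are summable, and $\hat v_n\to\hat v_\infty$. Summing the tail then yields the rate
\[
 \limsup_{n\to\infty}\tfrac1n\log d(\hat v_n,\hat v_\infty)\leq\lambda_2(A)-\lambda_1(A).
\]
Choosing signs consistently, we may take $v_n\to v_\infty$ as vectors.

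\emph{Step 2: $\langle x,v_\infty\rangle\neq0$ almost surely.} This is the step I expect to be the main obstacle, because it is where $x\notin E$ must enter. Suppose $\langle x,v_\infty\rangle=0$. Then $|\langle x,v_n\rangle|\leq\|x\|\,|v_n-v_\infty|$, which up to a constant is bounded by the angular distance and is therefore exponentially small at rate $\lambda_2-\lambda_1$. The second displayed inequality then gives
\[
 \limsup_{n\to\infty}\tfrac1n\log\|A^n(\omega)x\|
 \leq\max\bigl\{\lambda_1+(\lambda_2-\lambda_1),\,\lambda_2\bigr\}=\lambda_2<\lambda_1 .
\]
However, \eqref{eq:linear-proximal-vector-growth} (from \cite[Prop.~5.5.1]{BMbook}) says that for the fixed vector $x\notin E$ the growth rate is $\lambda_1(A)$ for $\Pp$-a.e.~$\omega$. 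So the event $\{\langle x,v_\infty\rangle=0\}$ is null. The care needed is that \eqref{eq:linear-proximal-vector-growth} holds for a fixed $x$ almost surely, which is exactly what we need here, with no uniformity in $x$ required.

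\emph{Step 3: conclusion.} On the full-measure event where Steps 1 and 2 hold, $|\langle x,v_n\rangle|\to|\langle x,v_\infty\rangle|>0$. Hence $\inf_n|\langle x,v_n\rangle|>0$ for all large $n$, and therefore
\[
 \log\frac{\|A^n(\omega)x\|}{\|x\|}\geq\log\|A^n(\omega)\|+\log\frac{|\langle x,v_n\rangle|}{\|x\|}=\log\|A^n(\omega)\|-O_{x,\omega}(1).
\]
Together with the trivial upper bound, this gives \eqref{eq:linear-operator-remainder-as}.
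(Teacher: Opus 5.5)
Your proof is correct, but it takes a different route from the paper.

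\textbf{The paper's route.} The paper derives the lemma from the averaged weighted contraction of Theorem~\ref{main:spectral-gap-linear}. Combined with the H\"older bound \eqref{eq:linear-projective-potential-holder} and independence, this contraction gives $\sum_{i\geq0}\int|D_i|\,d\Pp\leq C\Delta_{\theta,\beta}(\hat x,\hat y)$, where the $D_i$ are the one-step increments of the difference of the log-growths of two fixed vectors outside $E$. Hence any two such log-growths stay at an almost surely bounded distance. A basis $x_1=x/\|x\|,x_2,\ldots,x_m$ of unit vectors outside $E$, together with $\|B\|\leq C_{\mathcal B}\max_j\|Bx_j\|$, then transfers this bound to $\log\|A^n(\omega)\|$.

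\textbf{Your route.} You run the classical Oseledets-type argument. The top right singular direction $\hat v_n$ converges exponentially fast, and the almost sure growth \eqref{eq:linear-proximal-vector-growth} of the fixed vector rules out $x\perp v_\infty$.

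\textbf{What each buys.} Your route bypasses the spectral-gap machinery entirely. It uses the moment assumption only through Kingman's theorem, \eqref{eq:linear-proximal-vector-growth} and $\log\ell(\omega_n)=o(n)$. It also proves more. Since $\|A^n(\omega)x\|^2/\|A^n(\omega)\|^2=\langle x,v_n\rangle^2+O\bigl(\|x\|^2s_2(n)^2/s_1(n)^2\bigr)$, the difference in \eqref{eq:linear-operator-remainder-as} actually converges almost surely to $\log\bigl(|\langle x,v_\infty\rangle|/\|x\|\bigr)$. The paper's basis comparison only gives boundedness. The paper's route, in turn, costs nothing once Theorem~\ref{main:spectral-gap-linear} is available. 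It also gives an integrable, quantitative pairwise bound,
$\int\sup_{n}\bigl|\log\frac{\|A^n(\omega)y\|}{\|y\|}-\log\frac{\|A^n(\omega)x\|}{\|x\|}\bigr|\,d\Pp\leq C'\Delta_{\theta,\beta}(\hat x,\hat y)$.
Your random constant $\log|\langle x,v_\infty\rangle|$ comes with no integrability information.

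\textbf{The perturbation estimate.} The estimate you only sketch holds exactly as you stated it, with $C=1$. Let $v$ and $v'$ be top right singular vectors of $M$ and $BM$, and let $\pi$ be the orthogonal projection onto $v^\perp$. Apply $\pi$ to $v'=(BM)^*BMv'/s_1(BM)^2$. Then use $\|\pi M^*\|\leq s_2(M)$, $\|B^*BMv'\|\leq\|B\|\,s_1(BM)$ and $s_1(BM)\geq s_1(M)/\|B^{-1}\|$. This gives $d(\hat v,\hat v')=\|\pi v'\|\leq\|B\|\,\|B^{-1}\|\,s_2(M)/s_1(M)\leq\ell^2\,s_2(M)/s_1(M)$.
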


\begin{proof}
Fix $\hat x,\hat y\in X$. 
Choose $\theta,\beta>0$ such that $\beta<\varepsilon_0$,
$\theta/\beta<\varepsilon_0$ and $2\theta<\eta_0$, where
$\varepsilon_0$ is given by Theorem~\ref{main:spectral-gap-linear}
and $\eta_0$ by~\eqref{eq:exp-moment}. Set
$X_i=f_\omega^i(\hat x)$, $Y_i=f_\omega^i(\hat y)$ and
$D_i=\phi(\sigma^i\omega,Y_i)-\phi(\sigma^i\omega,X_i)$.
By~\eqref{eq:linear-projective-potential-holder}, independence of
$(X_i,Y_i)$ and $\omega_i$, and mean contraction,
\[
 \int |D_i|\,d\Pp
 \leq K_\theta\left(\int\ell(\omega)^{2\theta}\,d\Pp\right)
       \int d(X_i,Y_i)^\theta\,d\Pp
 \leq C\rho^i\Delta_{\theta,\beta}(\hat x,\hat y).
\]
Here $V\geq1$ and the moment is finite by~\eqref{eq:exp-moment}.

Hence $\sum_{i\geq0}\int|D_i|\,d\Pp<\infty$.
By Tonelli and the cocycle identity, for each fixed pair
$\hat x,\hat y\in X$,
\[
\sup_{n\geq1}\left|
\log\frac{\|A^n(\omega)y\|}{\|y\|}
-\log\frac{\|A^n(\omega)x\|}{\|x\|}
\right|\leq\sum_{i\geq0}|D_i(\omega)|<\infty
\quad\text{$\Pp$-a.s.}
\]

Now fix $x\notin E$ and choose a basis $x_1,\ldots,x_m$ of unit
vectors outside $E$ with $x_1=x/\|x\|$. For some $C_{\mathcal B}\geq1$,
every linear map $B:\R^m\to\R^m$ satisfies
\begin{equation}\label{eq:linear-basis-norm-equivalence}
\max_{1\leq j\leq m}\|Bx_j\|\leq\|B\|
\leq C_{\mathcal B}\max_{1\leq j\leq m}\|Bx_j\|.
\end{equation}
Consequently,
\[
0\leq\log\|A^n(\omega)\|-\log\|A^n(\omega)x_1\|
\leq\log C_{\mathcal B}
+\max_{1\leq j\leq m}\left|\log\|A^n(\omega)x_j\|
-\log\|A^n(\omega)x_1\|\right|.
\]
The pairwise comparison applies to this finite family and proves
\eqref{eq:linear-operator-remainder-as}.
\end{proof}

\subsubsection{Proof of Theorem~\ref{mainthm:pointwise-CLT-BE}}

Choose admissible $\alpha,\beta>0$ and $\eta>0$ so small that
$\eta+4\alpha<\eta_0$. Apply
Theorem~\ref{thm:abstract-limit-theorems} to the one-step observable
$u=\phi$ from~\eqref{eq:linear-logarithmic-cocycle}. Indeed,
\eqref{eq:linear-logarithmic-one-step-bound} and
\eqref{eq:linear-projective-potential-holder} give
\[
 \int (1+[\phi_\omega]_\alpha)^2
 e^{\eta\|\phi_\omega\|_\infty}\,d\Pp
 \leq C\int\ell(\omega)^{\eta+4\alpha}\,d\Pp<\infty,
\]
so~\eqref{eq:observable-mixed-moment} holds. Moreover,
\eqref{eq:maximizing-linear-stationary} identifies the stationary mean
of $\phi$ with $\lambda_1(A)$. By~\eqref{eq:linear-logarithmic-sum},
the Birkhoff sum satisfies
$S_n(\omega,\hat x)=\log(\|A^n(\omega)x\|/\|x\|)$.
Items~\textup{(i)}--\textup{(ii)} of
Theorem~\ref{thm:abstract-limit-theorems} give the
pointwise CLT and Berry--Esseen estimate, with variance $\sigma_\phi^2$, associated with the one-step $u=\phi$. We first prove the assertions with $\sigma_\phi$. The proof of Theorem~\ref{main:norm-clt} below identifies this variance
with the  variance $\sigma^2$ in the CLT for $\log \|A^n(\omega)\|$.
Theorem~\ref{thm:abstract-limit-theorems} also gives the local limit and renewal assertions whenever
$\phi$ is non-arithmetic in the sense of
Definition~\ref{def:observable-nonarithmetic} and Proposition~\ref{prop:measurable-arithmeticity}. Finally, item~\textup{(v)} of Theorem~\ref{thm:abstract-limit-theorems} gives the large deviation rates when $\sigma_\phi>0$ and the
exponential upper bound for every $\sigma_\phi\geq0$.

\subsubsection{Proof of Theorem~\ref{main:norm-clt}}
We use the tensor product $\R^m\otimes\R^m$ and the associated
Kronecker product of matrices; see~\cite[Sec.~4.2]{HornJohnsonTopics}.
Recall that the vectors
$v\otimes w$ span $\R^m\otimes\R^m$, and
\[
 (B\otimes C)(v\otimes w)=Bv\otimes Cw,
 \qquad
 \langle v_1\otimes w_1,v_2\otimes w_2\rangle
 =
 \langle v_1,v_2\rangle\langle w_1,w_2\rangle.
\]
Set $A^{(2)}(\omega)\eqdef A(\omega)\otimes A(\omega)$.
Then $(A^{(2)})^n(\omega)
 =
 A^n(\omega)\otimes A^n(\omega)$
and $\|B\otimes C\|=\|B\|\,\|C\|$. Hence $A^{(2)}$ satisfies the
exponential moment condition~\eqref{eq:exp-moment}, with exponent
$\eta_0/2$.

Let $s_1(B)\geq\cdots\geq s_m(B)>0$ denote the singular values of $B$.
By~\cite[Thm.~4.2.15]{HornJohnsonTopics}, the singular values of
$B\otimes B$ are the products $s_i(B)s_j(B)$. Thus the two largest
ones are $s_1(B)^2$ and $s_1(B)s_2(B)$. Applying this to
$B=A^n(\omega)$ and using the definition of the Lyapunov exponents,
we obtain
\[
 \lambda_1(A^{(2)})=2\lambda_1(A),
 \qquad
 \lambda_2(A^{(2)})
 =
 \lambda_1(A)+\lambda_2(A).
\]
In particular, $A^{(2)}$ also has a simple top Lyapunov exponent.

Fix an orthonormal basis $e_1,\ldots,e_m$ of $\R^m$ and consider the
unit vector
\[
 u\eqdef\frac1{\sqrt m}\sum_{j=1}^m e_j\otimes e_j.
\]
For every matrix $B$, using the tensor-product inner product and then
the singular values, 
\begin{align*}
 \|(B\otimes B)u\|^2=
 \frac1m
 \sum_{i,j=1}^m
 \langle Be_i,Be_j\rangle^2  =
 \frac1m\operatorname{tr}\bigl((B^{\mathsf T}B)^2\bigr)
 =
 \frac1m\sum_{i=1}^m s_i(B)^4.
\end{align*}
Since $s_1(B)=\|B\|$, it follows that
\begin{equation}
\label{eq:tensor-norm-two-sided}
 \frac1{\sqrt m}\|B\|^2
 \leq
 \|(B\otimes B)u\|
 \leq
 \|B\|^2
\end{equation}
and therefore
\begin{equation}
\label{eq:tensor-norm-comparison}
 \left|
 \frac12\log\|(B\otimes B)u\|-\log\|B\|
 \right|
 \leq\frac14\log m.
\end{equation}
Taking $B=A^n(\omega)$ in~\eqref{eq:tensor-norm-two-sided} and using
the Furstenberg--Kesten theorem~\cite{FurstenbergKestenCLT} gives
\[
 \lim_{n\to \infty}\frac1n
 \log\|(A^{(2)})^n(\omega)u\|
 =
 2\lambda_1(A)
 =
 \lambda_1(A^{(2)})
 \qquad\Pp\text{-a.s.}  \enlargethispage{2cm}
\]
Hence $\hat u$ does not belong to the equator of $A^{(2)}$, and
Theorem~\ref{mainthm:pointwise-CLT-BE} applies to $A^{(2)}$ and this
fixed direction. Let $\tau^2$ be the variance given by that theorem and put
$\sigma\eqdef\tau/2$.

Set
$U_n=\frac12\log\|(A^{(2)})^n(\omega)u\|-n\lambda_1(A)$ and
$V_n=\log\|A^n(\omega)\|-n\lambda_1(A)$. By
\eqref{eq:tensor-norm-comparison},
$|U_n-V_n|\leq\frac14\log m$. The pointwise central limit theorem
for $A^{(2)}$ gives $U_n/\sqrt n\to\cN(0,\sigma^2)$ in law;
thus, when $\sigma=0$, Slutsky's theorem proves the asserted CLT for
$V_n$.

When $\sigma>0$, let $\Phi$ be the standard normal distribution
function. The pointwise Berry--Esseen bound for $A^{(2)}$, together
with the comparison above and the fact that $\Phi$ is
$1/\sqrt{2\pi}$-Lipschitz, gives
\[
\sup_{s\in\R}\left|
\Pp\left(\frac{V_n}{\sigma\sqrt n}\leq s\right)-\Phi(s)
\right|
\leq \frac C{\sqrt n}
+\frac{\log m}{4\sigma\sqrt{2\pi n}}.
\]
This proves both the CLT and~\eqref{eq:norm-BE} in this case.

The variance $\sigma^2$ obtained here agrees with the pointwise variance $\sigma_\phi^2$ obtained in the preceding proof of pointwise CLT in 
Theorem~\ref{mainthm:pointwise-CLT-BE}. 
Indeed, for each fixed $x\notin E$,
\eqref{eq:linear-operator-remainder-as} makes the difference between
the two centered logarithms almost surely $O(1)$. After division by
$\sqrt n$ it tends to zero, Slutsky’s theorem proves that both CLTs have the same variance, i.e., $\sigma^2=\sigma_\phi^2$.

Finally, write $M=\frac14\log m$. For $n\varepsilon>2M$,
\eqref{eq:tensor-norm-comparison} and the pointwise bound in
Theorem~\ref{mainthm:pointwise-CLT-BE}\textup{(v)}, applied to
$A^{(2)}$ and $u$, give
$\Pp(|V_n|>n\varepsilon)
\leq\Pp(|U_n|>n\varepsilon/2)
\leq C e^{-h n\varepsilon^2}$. 
For $n\varepsilon\leq2M$, we have
$n\varepsilon^2\leq2M\varepsilon_0$. Thus, enlarging $C$
so that $C\geq e^{2hM\varepsilon_0}$ gives
$\Pp(|V_n|>n\varepsilon)\leq1\leq Ce^{-hn\varepsilon^2}$.  This proves~\eqref{eq:LDa}
also when $\sigma=0$.

Suppose that $\sigma>0$, and let $c$ be the rate function in
Theorem~\ref{mainthm:pointwise-CLT-BE}\textup{(v)}. In addition to
the upper bound in Theorem~\ref{main:norm-clt}\textup{(iii)}, one has
exact logarithmic rates for both tails. Indeed, the finite-direction
comparison of~\cite[proof of Thm.~6.2.1]{BMbook}, applied with a
basis of vectors outside $E$, gives,
for $0<\varepsilon<\varepsilon_0$,
\begin{equation}\label{eq:norm-positive-LD}
 \lim_{n\to \infty}\frac1n\log
 \Pp\left(
 \frac1n\log\|A^n(\omega)\|-\lambda_1(A)>\varepsilon
 \right)
 =-c(\varepsilon).
\end{equation}
To obtain the lower tail, consider the tensor-square cocycle
$A^{(2)}=A\otimes A$ and the vector $u$ introduced above. Recall from~\eqref{eq:tensor-norm-two-sided} that
\[
 \left|
 \frac12\log\|(A^{(2)})^n(\omega)u\|
 -\log\|A^n(\omega)\|
 \right|
 \leq\frac14\log m.
\]
Let $c_2$ be the local rate function associated with $A^{(2)}$ and
$u$. Since $\lambda_1(A^{(2)})=2\lambda_1(A)$ and its asymptotic
variance is $4\sigma^2>0$, the preceding pointwise argument gives
\[
 \lim_{n\to \infty}\frac1n\log
 \Pp\left(
 \frac1n\log\|A^n(\omega)\|-\lambda_1(A)>\varepsilon
 \right)
 =-c_2(2\varepsilon)
\]
and, similarly,
\[
 \lim_{n\to \infty}\frac1n\log
 \Pp\left(
 \frac1n\log\|A^n(\omega)\|-\lambda_1(A)<-\varepsilon
 \right)
 =-c_2(-2\varepsilon).
\]
Together with~\eqref{eq:norm-positive-LD}, this yields
$c_2(2\varepsilon)=c(\varepsilon)$ for all sufficiently small
$\varepsilon>0$. The logarithms of the leading eigenvalues of the
Laplace operators are real analytic near zero, and their second
derivatives there are respectively $\sigma^2>0$ and
$4\sigma^2>0$. The inverse function theorem  shows that
their local Legendre transforms $c$ and $c_2$ are real analytic near
zero. By the identity theorem, $c_2(2\varepsilon)=c(\varepsilon)$
for all small $|\varepsilon|$. The lower-tail rate is therefore $c(-\varepsilon)$, and combining the two tails proves the
corresponding two-sided logarithmic rates for the operator norm.

\subsubsection{Proof of Theorem~\ref{mainthm:linear-asip-final}}

We apply Theorem~\ref{cor:ASIP-global-contraction} to the one-step
potential $\phi$  in~\eqref{eq:linear-logarithmic-cocycle}.  For the same sufficiently
small $\theta,\beta>0$ chosen in Lemma~\ref{lem:linear-direction-comparison},
\eqref{eq:linear-projective-potential-holder} and $V\geq1$ give
\[
 \int
 |\phi(\omega,\hat x)-\phi(\omega,\hat y)|\,d\Pp
 \leq
 K_\theta
 \left(\int\ell(\omega)^{2\theta}\,d\Pp\right)
 \Delta_{\theta,\beta}(\hat x,\hat y).
\]
Thus~\eqref{eq:asip-final-observable-regularity} holds.  Moreover, $|\phi(\omega,\hat x)|\leq\log\ell(\omega)$,
so~\eqref{eq:exp-moment} implies $\int e^{a|\phi|}\,d\Pp\,d\nu<\infty$ for some $a>0$.
Finally, \eqref{eq:maximizing-linear-stationary} identifies the
stationary mean of $\phi$ with $\lambda_1(A)$.
Write $\sigma_\phi^2$ for the ASIP variance obtained by
applying Theorem~\ref{cor:ASIP-global-contraction} to $u=\phi$.  By Remark~\ref{rem:variance_ASIP}, $\sigma^2_\phi$ coincides with the  variance $\sigma^2$ in the
pointwise CLT.  Hence that theorem yields, for every fixed
$x\in\R^m\setminus E$,
\[
 \log\frac{\|A^n(\omega)x\|}{\|x\|}
 -n\lambda_1(A)-\sum_{k=1}^nG_k
 =
 O((\log n)^3)
 \qquad\text{a.s.}
\]
for independent $G_k\sim\cN(0,\sigma_\phi^2)$. Since $(\log n)^3=o(\sqrt n)$, this ASIP implies the pointwise CLT with
variance $\sigma_\phi^2$.   For the operator norm, the two processes are coupled along the same matrix
product and their difference is almost surely bounded by
\eqref{eq:linear-operator-remainder-as}. Lemma~\ref{lem:asip-bounded-comparison}
therefore transfers the ASIP with the same variance and error to
$\log\|A^n(\omega)\|$. Comparing with
the qualitative CLT in
Theorem~\ref{main:norm-clt} also identifies this variance with the one
appearing there.

If the law of $A(\omega)$ has compact support, then $\ell$ is bounded,
and hence so is $\phi$.  Theorem~
\ref{cor:ASIP-global-contraction} then applies with $\gamma=\infty$ and
gives the improved error $O((\log n)^2)$, both pointwise and, by the
same bounded comparison, for the operator norm.  This completes the
proof.

\subsubsection{Positivity of the variance}

We prove Proposition~\ref{prop:linear-zero-variance}.
The proof of
Theorem~\ref{mainthm:linear-asip-final}, together with
\eqref{eq:linear-operator-remainder-as}, shows that the pointwise and
operator-norm variances coincide.

Let $Q:\R^m\to\tilde V$ be the quotient map and, for
$\hat x\in X$, put
$b(\hat x)\eqdef\log\frac{\|Qx\|}{\|x\|}$.
For the angular metric and the quotient Euclidean norm,
$\|Qx\|/\|x\|=\delta(\hat x)$, so $b=\log\delta\leq0$.  Since
$QA(\omega)=\tilde A(\omega)Q$, summing the one-step identity gives
\begin{equation}
\label{eq:linear-quotient-telescoping}
 \log\frac{\|A^n(\omega)x\|}{\|x\|}
 -\log\frac{\|\tilde A^n(\omega)Qx\|}{\|Qx\|}
 =b(\hat x)-b(f_\omega^n(\hat x)).
\end{equation}
If $E=\{0\}$, then $b=0$. Otherwise, let $V$ be the Lyapunov
function furnished by Proposition~\ref{prop:repeller-drift}. Since
$\delta^{-\gamma}\leq V$ for some $\gamma>0$ and
$(\log(1/s))^2\leq C_\gamma s^{-\gamma}$ for $0<s\leq1$,
$|b|^2\leq C_\gamma V$. Proposition~\ref{prop:tightness} gives
$\int V\,d\nu<\infty$. In either case, stationarity yields
\[
 \sup_{n\geq0}
 \int|b(\hat x)-b(f_\omega^n(\hat x))|^2\,d\Pp\,d\nu
 \leq4\int b^2\,d\nu<\infty.
\]
Thus~\eqref{eq:linear-quotient-telescoping} shows that the stationary
logarithmic cocycle of $A$ and the quotient cocycle, started from the
pushforward of $\nu$, have the same asymptotic variance.  This is precisely the asymptotic variance $\sigma^2$ appearing in Theorem~\ref{mainthm:pointwise-CLT-BE}.  By
\cite[Claim~5.5.2]{BMbook}, $\tilde A$ is quasi-irreducible,
$\lambda_1(\tilde A)=\lambda_1(A)$ and
$\lambda_2(\tilde A)\leq\lambda_2(A)$.  If $\dim\tilde V=1$, its variance is
$\Var(\log\|\tilde A(\omega)\|)$, so the asserted equivalence follows
directly. If $\dim\tilde V\geq2$,
\cite[Prop.~IX]{BMbook} applied to $\tilde A$ proves the proposition.


\section{Random interval maps}
\label{s:interval}

We conclude with nonlinear applications illustrating that the abstract theory applies when the common invariant set consists of boundary points.

\subsection{Random interval diffeomorphisms}
Let $f_1,\ldots,f_k\in\mathrm{Diff}^2_+([0,1])$ have probabilities
$p_i>0$, with $\sum_{i=1}^kp_i=1$. Let $f$ be the associated random
map and $P$ its Markov operator. Every $f_i$ fixes $0$ and $1$, so
$E=\{0,1\}$ is totally invariant. Assume:
\begin{enumerate}[itemsep=0.1cm]
\item[\rm(i)] for every $x\in(0,1)$ there are $i,j$ such that $f_i(x)<x<f_j(x)$;
\item[\rm(ii)] $\chi_i\eqdef \sum_{j=1}^kp_j\log f_j'(i)>0$ for $i=0,1$.
\end{enumerate}
These are the hypotheses of~\cite{CzudekInterval}. For two maps
with $f_1(x)<x<f_2(x)$, the qualitative results used below were
proved in~\cite{GharaeiHomburg}. Put
$\delta(x)=d(x,E)=\min\{x,1-x\}$, and equip the weighted
H\"older space $\Hol_{\alpha,\beta}((0,1))$ with the norm
\[
 \|\varphi\|_{\alpha,\beta}=|\varphi(x_0)|+
 \sup_{x\neq y}\frac{|\varphi(x)-\varphi(y)|\delta(x)^\beta\delta(y)^\beta}{|x-y|^\alpha}.
\]

\begin{corollary}\label{cor:interval-spectral-gap}
Under \textup{(i)}--\textup{(ii)}, there is $\varepsilon_0>0$ such that,
whenever $\alpha,\beta>0$ satisfy $\beta<\varepsilon_0$ and
$\alpha/\beta<\varepsilon_0$, there are $C>0$ and
$\rho\in(0,1)$ such that
\begin{align*}
\int
 \frac{
  |f_\omega^n(x)-f_\omega^n(y)|^\alpha
 }{
  \delta(f_\omega^n(x))^\beta
  \delta(f_\omega^n(y))^\beta
 }\,
 d\Pp \leq
 C\rho^n
 \frac{
  |x-y|^\alpha
 }{
  \delta(x)^\beta\delta(y)^\beta } \qquad \text{for every \(x,y\in (0,1)\) and \(n\geq0\).
 }
\end{align*}
Moreover, $P$ has a unique stationary probability $\nu$ on $(0,1)$ and a spectral gap on $\Hol_{\alpha,\beta}((0,1))$. Every bounded one-step observable uniformly H\"older in $x$ satisfies, for each fixed $x\in(0,1)$, an ASIP with error $O((\log n)^2)$. In particular, for some $\sigma^2\geq0$ and independent $G_j\sim\cN(0,\sigma^2)$,
\[
 \log(f_\omega^n)'(x)
 -n\sum_{i=1}^kp_i\int\log f_i'\,d\nu
 -\sum_{j=1}^nG_j
 =O((\log n)^2)\qquad\text{a.s.}
\]
\end{corollary}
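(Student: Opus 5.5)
The plan is to apply Corollary~\ref{cor:repeller-quasicompact} with $M=[0,1]$, $E=\{0,1\}$ and $X=(0,1)$, and then Theorem~\ref{cor:ASIP-global-contraction} for the ASIP. There are four things to verify: the moment condition, the repelling property of $E$, negativity of $\lambda(\mu)$ for stationary $\mu$ on $(0,1)$, and stationary proximality of $f|_X$.

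\emph{Moment.} The family is finite and consists of $C^2$ diffeomorphisms fixing $0$ and $1$. Hence each $l_E(f_i)\leq\Lip(f_i)+\Lip(f_i^{-1})$ is finite, and the moment condition holds for every $\eta$.

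\emph{Repeller.} Near $0$ one has $d(y,E)=y$, so the repelling quantity is
\[
\int\log\frac{y}{f_\omega^n(y)}\,d\Pp
\longrightarrow -\sum\log (f^n_\omega)'(0)=-n\chi_0<0
\quad\text{as } y\to0,
\]
uniformly by the $C^1$ bounds, and symmetrically at $1$. By Remark~(i) after Definition~\ref{def:repelling-set}, taking $n=1$ already suffices. Proposition~\ref{prop:repeller-drift} then supplies the Lyapunov function $V=\delta^{-\gamma}$.

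\emph{Mostly contracting and proximality.} This is the main step, and I would import it from~\cite{CzudekInterval} (see also~\cite{GharaeiHomburg}). Under (i)--(ii) there is a unique stationary measure $\nu$ on $(0,1)$; it has negative Lyapunov exponent, and the system synchronizes.

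If I had to prove it directly, I would argue as follows.
\begin{itemize}
\item[] \emph{Negative exponent.} Let $\mu$ be an ergodic stationary measure on $(0,1)$. If $\lambda(\mu)\geq0$, then by the invariance principle (Avila--Viana / Malicet) there is a deterministic $\mu$-a.e.\ invariant structure: $\mu$ is invariant under every $f_i$. Condition (i), together with monotonicity, is incompatible with a common invariant probability on $(0,1)$. Indeed, the endpoints of $\supp\mu$ would be common fixed points, or the distribution function would be preserved by every $f_i$, forcing $f_i=\mathrm{id}$ on the support, which contradicts (i).
\item[] \emph{Proximality.} For item (vi) of Corollary~\ref{prop:synchronization-equivalences}, order preservation combined with local contraction at a typical point shows that any two points $x<y$ are brought together inside a compact set. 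The argument uses the recurrence given by tightness and the fact that the interval $[x,y]$ is eventually contracted.
\end{itemize}
By (vi)$\Rightarrow$(i), $f|_X$ is stationarily proximal. Corollary~\ref{cor:repeller-quasicompact} then gives the weighted contraction, uniqueness of $\nu$, and the spectral gap on $\Hol_{\alpha,\beta}((0,1))$. After renaming $\gamma\beta$ as $\beta$, these are exactly the stated inequality and norm.

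\emph{ASIP.} Let $u$ be a bounded one-step observable, uniformly $\theta$-H\"older in $x$ with $\theta\geq\alpha$. Since $d\leq1$ and $V\geq1$,
\[
|u_\omega(x)-u_\omega(y)|\leq C|x-y|^\alpha\leq C\Delta_{\alpha,\beta}(x,y),
\]
so \eqref{eq:asip-final-observable-regularity} holds. Theorem~\ref{cor:ASIP-global-contraction} with $\gamma=\infty$ then gives the error $O((\log n)^2)$. For the particular statement, take $u_\omega=\log f_{\omega_0}'$. It is bounded, and Lipschitz because $f_i\in C^2$ with $f_i'>0$. Its Birkhoff sum is $\log(f_\omega^n)'(x)$ by the chain rule, and its mean is $m=\sum p_i\int\log f_i'\,d\nu$.
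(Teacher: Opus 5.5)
Your proposal is correct and follows essentially the same route as the paper: automatic moments for the finite $C^2$ family, the repeller from $\psi_1(0)=-\chi_0<0$ and $\psi_1(1)=-\chi_1<0$, uniqueness, negative exponent and synchronization imported from \cite{CzudekInterval}, then Corollary~\ref{cor:repeller-quasicompact} and Theorem~\ref{cor:ASIP-global-contraction} with $\gamma=\infty$ applied to $\log f_{\omega_0}'$. The only small difference is that the paper turns the imported almost-sure synchronization into stationary proximality directly through (iii)$\Rightarrow$(i) of Corollary~\ref{prop:synchronization-equivalences}, so your detour through (vi) and your self-contained sketches are not needed.
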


\begin{proof}
The moment hypothesis of Proposition~\ref{prop:repeller-drift} is automatic for this finite family of $C^2$ diffeomorphisms. Near $0$ and $1$, respectively,
\[
 \psi_{1}(0)=\lim_{x\to 0^+}\sum_{i=1}^kp_i\log\frac{x}{f_i(x)}=-\chi_0<0
 \  \text{and} \   
 \psi_{1}(1)=\lim_{x\to 1^-}\sum_{i=1}^kp_i\log\frac{1-x}{1-f_i(x)}=-\chi_1<0.
\]
Thus $E$ is repelling already for one iterate. By \cite[Sec.~2.1 and Thm.~1]{CzudekInterval}, assumptions (i)--(ii) also give a unique stationary probability $\nu$ on $(0,1)$, negative Lyapunov exponent
\[
        \lambda(\nu)=\sum_{i=1}^kp_i\int\log f_i'\,d\nu<0,
\]
and almost-sure synchronization of every pair of interior points. Hence $f|_X$ is mostly contracting and, by Corollary~\ref{prop:synchronization-equivalences}, stationarily proximal. Corollary~\ref{cor:repeller-quasicompact} gives the global contraction and spectral gap.

If $\phi$ is bounded and uniformly $\eta$-H\"older in $x$, choose the admissible $\alpha\leq\eta$. Since $V\geq1$ and $d\leq1$,
$\int|\phi(\omega,x)-\phi(\omega,y)|\,d\Pp\leq L|x-y|^\eta\leq L\Delta_{\alpha,\beta}(x,y)$. Theorem~\ref{cor:ASIP-global-contraction} with $\gamma=\infty$ gives the $O((\log n)^2)$ ASIP. Finally, each $\log f_i'$ is bounded and Lipschitz, and the chain rule gives $\sum_{j=0}^{n-1}\log f_{\omega_j}'(f_\omega^j(x))=\log(f_\omega^n)'(x)$, proving the last assertion.
\end{proof}

\subsection{Random logistic maps}
Consider the iid random logistic maps
\[
        f_\omega(x)=c(\omega)x(1-x),\qquad x\in[0,1],
\]
with one-step $c(\omega)>0$, i.e., $c(\omega)=c_{\omega_0}$. Assume that, for some $c_+<3$ and $\eta_0>0$,
\begin{equation}\label{eq:logistic-assumptions}
 0<c(\omega)\leq c_+\quad\Pp\text{-a.s.},\qquad
 \int c(\omega)^{-\eta_0}\,d\Pp<\infty,
 \qquad
 \chi\eqdef \int\log c(\omega)\,d\Pp>0.
\end{equation}
Set $M=[0,1]$, $E=\{0,1\}$, $X=(0,1)$, and $\delta(x)=d(x,E)=\min\{x,1-x\}$. The condition $\chi>0$ is the classical supercritical condition.

\begin{corollary}\label{cor:random-logistic}
Under \eqref{eq:logistic-assumptions}, the global-contraction and spectral-gap conclusions of Corollary~\ref{cor:interval-spectral-gap} hold. Moreover, every bounded one-step observable uniformly H\"older in $x$ satisfies, for each fixed $x\in(0,1)$, an ASIP with error $O((\log n)^2)$.
\end{corollary}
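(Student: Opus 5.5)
The plan is to apply Corollary~\ref{cor:repeller-quasicompact} to the random logistic family on $M=[0,1]$ with $E=\{0,1\}$, and then to derive the ASIP from Theorem~\ref{cor:ASIP-global-contraction} exactly as in the proof of Corollary~\ref{cor:interval-spectral-gap}.

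\textbf{Total invariance.} Since $c_+<3<4$, each $f_\omega$ maps $[0,1]$ into $[0,c_+/4]\subset[0,1)$. Hence $E=\{0,1\}$ is not literally totally invariant: $f_\omega(1)=0$ and $f_\omega^{-1}(E)=E$. I would replace $E$ by $\{0\}$ and $M$ by a forward-invariant compact interval $[0,b]$ with $b<1$. The point $1$ is transient: after one step every orbit lies in $(0,c_+/4]$. Points of $(0,1)$ near $1$ are mapped near $0$, and the weight $\delta(x)=\min\{x,1-x\}$ near $1$ is comparable to $\delta(f_\omega(x))$ near $0$, since $f_\omega(x)\approx c(\omega)(1-x)$. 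One step of $P$ therefore transfers control from near $1$ to near $0$. The plan is to work on $X=(0,1)$ with $V(x)=\delta(x)^{-\gamma}$ and to check the Lyapunov inequality directly:
\begin{itemize}
\item Near $0$, $f_\omega(x)/x=c(\omega)(1-x)\to c(\omega)$.
\item Near $1$, $\delta(f_\omega x)/\delta(x)\to c(\omega)$.
\end{itemize}
In both cases
\[
\int(\delta(f_\omega x)/\delta(x))^{-\gamma}\,d\Pp\to\int c^{-\gamma}\,d\Pp<1
\]
for small $\gamma$, by Lemma~\ref{lem:one-sided-small-powers} applied to $Z=-\log c$. The moment $\int c^{-\eta_0}<\infty$ supplies the needed exponential moment of $Z^+$. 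Boundedness of $c$ gives $\Lip(f_\omega)\leq c_+$, so the moment hypothesis~\eqref{eq:random-map-moment} is immediate. This yields $PV\leq qV+C$, so $V$ is a Lyapunov function on $X$.

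\textbf{Mostly contracting.} Let $\mu$ be stationary on $(0,1)$. For $c_+<3$ the deterministic maps have attracting interior fixed points $1-1/c$ when $c>1$, but the random system needs an argument. I would use the representation $\lambda(\mu)=\int\log|c(1-2x)|\,d\Pp\,d\mu$, which holds for one-dimensional $C^1$ maps, together with a comparison argument. The $\mu$-a.s. orbits stay in $(0,c_+/4]$ with $c_+/4<3/4$. The key identity I would try is
\[
\log|f_\omega'(x)|=\log f_\omega(x)-\log x+\log\frac{|1-2x|}{1-x}.
\]
The first two terms integrate to zero by stationarity, provided $\log x\in L^1(\mu)$, which follows from $\int V\,d\mu<\infty$. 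For $x\in(0,3/4)$, $\frac{|1-2x|}{1-x}<1$ unless $x=0$, giving $\lambda(\mu)<0$. Note that $|1-2x|<1-x$ iff $x<2/3$ or $x<\ldots$; I will check carefully that $c_+/4<2/3$, i.e.\ $c_+<8/3$ (is $3$ needed?). For $x\in[2/3,3/4)$ one needs the symmetric computation $\log|1-2x|-\log(1-x)\le \log 2-\ldots$. This is the main obstacle: I expect to refine the identity using $1-x$ in place of $x$, namely $\log(1-f(x))$-type telescoping on the region $x>1/2$, and to combine the two identities. This is plausibly where the threshold $c_+<3$ enters, with the reasoning done on $\mu$-typical orbits.

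\textbf{Stationary proximality.} For monotone-on-pieces interval maps, I would prove synchronization via Corollary~\ref{prop:synchronization-equivalences}(vi). After one step all points lie in $[0,c_+/4]$, and $f_\omega$ is increasing on $[0,1/2]$. Given two points, the local contraction property (mostly contracting plus tightness, as in the (vi)$\Rightarrow$(v) argument) shows that a neighborhood of any point in the support of $\nu$ contracts. Orbits of both points visit a fixed compact set infinitely often by tightness. I would then use order-preservation on $[0,1/2]$ to squeeze their images into one contracting neighborhood. Uniqueness of ergodic measures would come from the fact that supports of distinct ergodic measures would be disjoint intervals, which is impossible by monotonicity and interlacing.

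With these hypotheses verified, Theorem~\ref{thm:product-weighted-global-contraction} gives the contraction and spectral gap. The ASIP then follows verbatim from Theorem~\ref{cor:ASIP-global-contraction} with $\gamma=\infty$.
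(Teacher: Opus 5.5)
\emph{Mostly contracting.} The proposal has genuine gaps in its two dynamical inputs; the paper does not prove either one but imports both from Kl\"unger's attractivity theorem and the negativity of the exponent for parameters below $3$, as recalled by Steinsaltz. Your telescoping identity is correct and gives $\lambda(\mu)=\int\log r\,d\mu$ with $r(x)=|1-2x|/(1-x)$. However, $r<1$ only on $(0,2/3)$, while $\mu$ lives on $(0,c_+/4]$. For $c_+\geq 8/3$ a stationary measure can charge $[2/3,c_+/4]$: if the law charges $2$ and $c_+$, then $1/2$ and $c_+/4$ lie in $\supp\nu$. There $\log r\geq0$, and it approaches $\log 2$ as $c_+\to3$. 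So, as written, your argument covers only $c_+<8/3$, and the step you call the main obstacle is left open. Your route can be closed by pairing each visit to $B=[2/3,1)$ with the preceding point. If $f_c(x')\in B$ with $c\leq c_+<3$, then $t=|1-2x'|$ satisfies $c(1-t^2)/4\geq2/3$. Hence $t\leq t_+\eqdef(1-8/(3c_+))^{1/2}<1/3$ and $x'\in(1/3,2/3)$. Using $r(x')=2t/(1\pm t)$, the monotonicity of $r$ on $[1/2,1)$, and $f_c(x')<3(1-t^2)/4$, you get
\[
 r(x')\,r(f_c(x'))\leq\frac{2t}{1-t}\cdot\frac{2(1-3t^2)}{1+3t^2},
 \qquad
 (1-t)(1+3t^2)-4t(1-3t^2)=(1+t)(1-3t)^2,
\]
so this product is at most some $\kappa<1$ for $t\in[0,t_+]$. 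Since $\mathbf 1_B(f_\omega x')\neq0$ forces $x'\in(1/3,2/3)$, stationarity gives $\int\log r\,d\mu=\int_{(0,2/3)}\bigl[\log r(x')+\int\mathbf 1_B(f_\omega x')\log r(f_\omega x')\,d\Pp\bigr]\,d\mu(x')$. The bracket is strictly negative, and $\mu((0,2/3))\geq1/2$, so $\lambda(\mu)<0$. This is exactly where $c_+<3$ enters: for $c=3$ the fixed point $2/3$ is neutral.

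\emph{Stationary proximality.} Your sketch relies on order preservation on $[0,1/2]$. Once $c_+>2$, however, stationary measures can charge $(1/2,c_+/4]$, where the maps reverse orientation. The mechanism that would bring two arbitrary points into one contracting neighbourhood is never supplied, and neither is the \emph{interlacing} argument for supports. The paper instead combines Kl\"unger's a.s.\ convergence of the reversed compositions $G_n$ with $G_n\stackrel{\rm law}=f_\omega^n$ to obtain synchronization in mean, and concludes by (iv)$\Rightarrow$(i) of Corollary~\ref{prop:synchronization-equivalences}. If you want to stay with item~(vi), it is in fact immediate. Since $\chi>0$, we have $\Pp(c>1)>0$, so some $c_1\in(1,c_+]$ lies in the support of the law of $c$. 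Because $\|f_c-f_{c'}\|_\infty=|c-c'|/4$, the map $f_{c_1}$ then lies in the support of the law of the fiber maps. The logistic map with parameter $c_1\in(1,3)$ attracts every point of $(0,1)$ to $1-1/c_1$, so the sequence $g_n=f_{c_1}^n$ verifies (vi). Finally, your worry about total invariance is unfounded: $f_\omega^{-1}(E)=E$ is exactly the definition in Section~\ref{sec:repellers}, and $f_\omega(1)=0$ is allowed. The paper applies Corollary~\ref{cor:repeller-quasicompact} with $E=\{0,1\}$ after computing $\psi_1(0)=\psi_1(1)=-\chi$. Your direct drift check for $V=\delta^{-\gamma}$ is equivalent to this, and your ASIP step matches the paper.
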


\begin{proof}
Since $c_+<4$, $0<f_\omega(x)\leq c_+/4<1$ on $X$.
As $f_\omega(x)=0$ exactly at $E$, we have $f_\omega^{-1}(E)=E$.
Also, $\Lip(f_\omega)\leq c_+$ and
$x(1-x)\geq\delta(x)/2$. If $f_\omega(x)\leq1/2$, then
$\delta(f_\omega(x))=f_\omega(x) 
 \geq c(\omega)\delta(x)/2$, otherwise $\delta(f_\omega(x))=1-f_\omega(x) \geq1-c_+/4$ while $\delta(x)\leq1/2$. Thus
\[
 K(\omega)=\sup_{x\in (0,1)} \frac{\delta(x)}{\delta(f_\omega(x))} \leq\max\left\{\frac2{c(\omega)},\frac2{4-c_+}\right\},
\]
so the moment hypothesis of Proposition~\ref{prop:repeller-drift} follows from~\eqref{eq:logistic-assumptions}.
Near $0$ and $1$, the expression
$\log(\delta(x)/\delta(f_\omega(x)))$ equals, respectively,
$-\log c(\omega)-\log(1-x)$ and
$-\log c(\omega)-\log x$. Thus
$\psi_{1}(0)=\psi_{1}(1)=-\int \log c(\omega)\,d\Pp= -\chi<0$,  and $E$ is repelling according to Definition~\ref{def:repelling-set}.

For the interior dynamics, Steinsaltz \cite[pp.~559--560]{SteinsaltzLogistic} recalls Kl\"unger's attractivity result for parameters in $[0,3]$ and that the Lyapunov exponent is negative when the parameters are strictly below $3$. Thus there is a unique stationary probability on $X$ and $f|_X$ is mostly contracting. Here attractivity means almost-sure convergence to a common limit for the reversed products $G_n=f_{\omega_0}\circ\cdots\circ f_{\omega_{n-1}}$ on $X$. Since the iid law is invariant under reversal, $G_n\stackrel{\rm law}=f_\omega^n$ for each $n$; bounded convergence of the distance between the images of any two interior points therefore gives synchronization in mean for the forward products, and Corollary~\ref{prop:synchronization-equivalences} yields stationary proximality. Corollary~\ref{cor:repeller-quasicompact} now gives global contraction and the spectral gap.

The ASIP follows exactly as above: a bounded uniformly $\eta$-H\"older one-step observable satisfies \eqref{eq:asip-final-observable-regularity} for admissible $\alpha\leq\eta$, and Theorem~\ref{cor:ASIP-global-contraction} with $\gamma=\infty$ gives error $O((\log n)^2)$.
\end{proof}

\begin{remark}\label{rem:logistic-beyond-three}
The restriction $c_+<3$ only ensures attractivity without further assumptions. More general criteria allowing parameters above $3$ are given in \cite[Thm.~2]{SteinsaltzLogistic}. In particular, \cite[Cor.~3]{SteinsaltzLogistic} gives attractive iid examples with $c/4\sim{\rm Beta}(a+\tfrac12,a-\tfrac12)$, $a>1$, whose support is $(0,4)$. In this case
$K(\omega)\leq\max\{2/c(\omega),2/(4-c(\omega))\}$, and the Beta endpoint moments give the moment hypothesis of Proposition~\ref{prop:repeller-drift}. Hence the same weighted spectral gap and $O((\log n)^2)$ ASIP hold with parameters arbitrarily close to $0$ and $4$.
\end{remark}

\section*{Acknowledgments}

P.~G.~Barrientos was partially supported by the Spanish MCIN under
grant PID-2023-147461NB-I00, by CNPq under grants
PQ~302738/2025-8 and Projeto Universal~401737/2025-0,
and by CAPES---Finance Code~001.
D.~Malicet acknowledges support from the
R\'eseau Franco-Br\'esilien de Math\'ematiques (RFBM)
for a research visit to IMPA and UFF in August 2025, when this paper was developed.

\paragraph{\bf Declaration of AI use.} AI tools were used to assist with English-language editing
and the clarification and refinement of some proofs and applications.
The proof of the main theorem was developed without AI assistance.
The authors reviewed the resulting text and take full responsibility
for the mathematical content and the final manuscript.

\bibliographystyle{alpha3}
\bibliography{biblio}

@article{AG20,
  author  = {Aoun, Richard and Guivarc'h, Yves},
  title   = {Random matrix products when the top {L}yapunov exponent is simple},
  journal = {Journal of the European Mathematical Society},
  volume  = {22},
  number  = {7},
  pages   = {2135--2182},
  year    = {2020},
  doi     = {10.4171/JEMS/962}
}

@book{BMbook,
  author    = {Barrientos, Pablo G. and Malicet, Dominique},
  title     = {Mostly Contracting Random Maps},
  series    = {Lecture Notes in Mathematics},
  volume    = {2402},
  publisher = {Springer},
  address   = {Cham},
  year      = {2026},
  note      = {Forthcoming},
  isbn      = {978-3-032-35290-3}
}

@book{BenaimHurth,
  author    = {Bena{\"i}m, Michel and Hurth, Tobias},
  title     = {Markov Chains on Metric Spaces: A Short Course},
  series    = {Universitext},
  publisher = {Springer},
  address   = {Cham},
  year      = {2022},
  doi       = {10.1007/978-3-031-11822-7},
  isbn      = {978-3-031-11821-0}
}

@article{BenoistQuint,
  author  = {Benoist, Yves and Quint, Jean-Fran{\c c}ois},
  title   = {Central Limit Theorem for Linear Groups},
  journal = {The Annals of Probability},
  volume  = {44},
  number  = {2},
  pages   = {1308--1340},
  year    = {2016},
  doi     = {10.1214/15-AOP1002}
}

@article{CDM18,
  author  = {Cuny, Christophe and Dedecker, J{\'e}r{\^o}me and Merlev{\`e}de, Florence},
  title   = {On the {K}oml{\'o}s, {M}ajor and {T}usn{\'a}dy strong approximation for some classes of random iterates},
  journal = {Stochastic Processes and their Applications},
  volume  = {128},
  number  = {4},
  pages   = {1347--1385},
  year    = {2018},
  doi     = {10.1016/j.spa.2017.07.011}
}

@article{CDM24,
  author  = {Cuny, Christophe and Dedecker, J{\'e}r{\^o}me and Merlev{\`e}de, Florence},
  title   = {Strong Approximations for a Class of Dependent Random Variables with Semi-Exponential Tails},
  journal = {Journal of Theoretical Probability},
  volume  = {37},
  number  = {3},
  pages   = {2234--2252},
  year    = {2024},
  doi     = {10.1007/s10959-023-01306-0}
}

@article{GuivarchUrban05,
  author  = {Guivarc'h, Yves and Urban, Roman},
  title   = {Semigroup actions on tori and stationary measures
             on projective spaces},
  journal = {Studia Mathematica},
  volume  = {171},
  number  = {1},
  pages   = {33--66},
  year    = {2005},
  doi     = {10.4064/sm171-1-3}
}

@article{GR85,
  title={Frontiere de {Furstenberg}, propri{\'e}t{\'e}s de contraction et th{\'e}oremes de convergence},
  author={Guivarc'h, Yves and Raugi, Albert},
  journal={Zeitschrift f{\"u}r Wahrscheinlichkeitstheorie und Verwandte Gebiete},
  volume={69},
  pages={187--242},
  year={1985},
  publisher={Springer}
}

@article{FK83,
  title={Random matrix products and measures on projective spaces},
  author={Furstenberg, Harry and Kifer, Yuri},
  journal={Israel Journal of Mathematics},
  volume={46},
  pages={12--32},
  year={1983},
  publisher={Springer}
}

@misc{CzudekInterval,
  author        = {Czudek, Klaudiusz},
  title         = {Exponential decay of correlations for random interval diffeomorphisms},
  year          = {2026},
  eprint        = {2504.16549},
  archivePrefix = {arXiv},
  primaryClass  = {math.DS},
  doi           = {10.48550/arXiv.2504.16549},
  note          = {arXiv:2504.16549v2}
}

@article{FurstenbergKestenCLT,
  author  = {Furstenberg, Harry and Kesten, Harry},
  title   = {Products of Random Matrices},
  journal = {The Annals of Mathematical Statistics},
  volume  = {31},
  number  = {2},
  pages   = {457--469},
  year    = {1960},
  doi     = {10.1214/aoms/1177705909}
}

@article{GharaeiHomburg,
  author  = {Gharaei, Masoumeh and Homburg, Ale Jan},
  title   = {Random interval diffeomorphisms},
  journal = {Discrete and Continuous Dynamical Systems -- Series S},
  volume  = {10},
  number  = {2},
  pages   = {241--272},
  year    = {2017},
  doi     = {10.3934/dcdss.2017012}
}

@book{HH01,
  author    = {Hennion, Hubert and Herv{\'e}, Lo{\"i}c},
  title     = {Limit Theorems for {M}arkov Chains and Stochastic Properties of Dynamical Systems by Quasi-Compactness},
  series    = {Lecture Notes in Mathematics},
  volume    = {1766},
  publisher = {Springer},
  address   = {Berlin, Heidelberg},
  year      = {2001},
  doi       = {10.1007/b87874},
  isbn      = {978-3-540-42415-4}
}

@book{HornJohnsonTopics,
  author    = {Horn, Roger A. and Johnson, Charles R.},
  title     = {Topics in Matrix Analysis},
  publisher = {Cambridge University Press},
  address   = {Cambridge},
  year      = {1991},
  doi       = {10.1017/CBO9780511840371},
  isbn      = {978-0-521-46713-1}
}

@incollection{LePage,
  author    = {Le Page, {\'E}mile},
  title     = {Th{\'e}or{\`e}mes limites pour les produits de matrices al{\'e}atoires},
  booktitle = {Probability Measures on Groups},
  editor    = {Heyer, Herbert},
  series    = {Lecture Notes in Mathematics},
  volume    = {928},
  pages     = {258--303},
  publisher = {Springer},
  address   = {Berlin, Heidelberg},
  year      = {1982},
  doi       = {10.1007/BFb0093229}
}

@article{Mal17,
  author  = {Malicet, Dominique},
  title   = {Random Walks on {Homeo}($S^1$)},
  journal = {Communications in Mathematical Physics},
  volume  = {356},
  number  = {3},
  pages   = {1083--1116},
  year    = {2017},
  doi     = {10.1007/s00220-017-2996-5}
}

@article{SteinsaltzLogistic,
  author  = {Steinsaltz, David},
  title   = {Random Logistic Maps and Lyapunov Exponents},
  journal = {Indagationes Mathematicae},
  volume  = {12},
  number  = {4},
  pages   = {557--584},
  year    = {2001},
  doi     = {10.1016/S0019-3577(01)80042-2}
}

@article{walkden2013transfer,
  author  = {Walkden, Charles},
  title   = {Transfer operators for contractive {M}arkov systems and stochastic stability of the invariant measure},
  journal = {Dynamical Systems},
  volume  = {28},
  number  = {1},
  pages   = {34--47},
  year    = {2013},
  doi     = {10.1080/14689367.2012.739598}
}

@book{BouLac:85,
  author    = {Bougerol, Philippe and Lacroix, Jean},
  title     = {Products of Random Matrices with Applications to {Schr{\"o}dinger} Operators},
  series    = {Progress in Probability and Statistics},
  volume    = {8},
  publisher = {Birkh{\"a}user},
  address   = {Boston, MA},
  year      = {1985},
  doi       = {10.1007/978-1-4684-9172-2},
  isbn      = {978-1-4684-9174-6}
}

\end{document}